\newcommand{\R}{\mathbb{R}}
\newcommand{\N}{\mathbb{N}}
\newcommand{\M}{\mathcal{M}}
\newcommand{\T}{\mathcal{T}}
\theoremstyle{plain}% default
\newtheorem{theorem}{Theorem}
\newtheorem{lemma}{Lemma}
\newtheorem{proposition}{Proposition}
\newtheorem{corollary}{Corollary}
\newtheorem{remark}{Remark}
\newtheorem{definition}{Definition}
\newtheorem{example}{Example}
\theoremstyle{remark}
\title{Universality of the Wigner-Gurau limit for random tensors}
\author{Rémi Bonnin}
\date{}
\begin{document}

\maketitle

\begin{abstract}
In this article, we develop a combinatorial approach for studying moments of the resolvent trace for random tensors proposed by Razvan Gurau. Our work is based on the study of hypergraphs and extends the combinatorial proof of moments convergence for Wigner's theorem. This also opens up paths for research akin to free probability for random tensors.
\\ Specifically, trace invariants form a complete family of tensor invariants and constitute the moments of the resolvent trace. For a random tensor with entries independent, centered, with the right variance and bounded moments, we prove the convergence of the expectation and bound the variance of the balanced single trace invariant. This is the universality of the convergence of the moments of the tensor towards the limiting moments given by the Fuss-Catalan numbers, which are the moments of the law obtained by Gurau in the Gaussian case. This generalizes Wigner's theorem for random tensors. 
\\ Additionally, in the Gaussian case, we show that the limiting distribution of the moments of the $k$-times contracted $p$-order random tensor by a deterministic vector is always the one of a dilated Wigner-Gurau law at order $p-k$. This establishes a connection with the approach of random tensors through study of the matrices given by the $p-2$ contractions of the tensor.
\end{abstract}

\tableofcontents

\section{Introduction}

The study of random tensors has emerged as a generalization of random matrices largely motivated by theoretical physics \cite{gur}. In theoretical physics, random tensors have become indispensable tools for modeling complex systems and phenomena. From quantum field theory to condensed matter physics, random tensor networks provide a powerful framework for understanding emergent behavior and quantum entanglement in multipartite systems \cite{colgur1} \cite{colgur2} \cite{sasa2}. 

More recently, the topic received an important amount of attention linked to computer science \cite{sedd0}. Indeed random tensors find applications in a myriad of questions in computer science, ranging from machine learning to computational biology. Tensor-based methods leverage the high-dimensional structures encoded in data, enabling efficient representation and analysis of complex datasets \cite{sasa1}. The techniques used to study these objects are various and differ according to the point of view adopted. Typically, Jagannath, Lopatto and Miolane proved an interesting analogy of the BBP transition for tensors using techniques of statistical physics \cite{jagannath}, useful because in many applications, prior knowledge on the process that produces the observations leads to a low-rank tensor model (e.g. in \cite{anand, sidi, land}).

Random tensors are more tedious objects than random matrices. Several central notions of the random matrices theory can be generalized with varying degrees of success. For instance, unlike the matrix case, there is no unique notion of eigenvalue for tensors but a few relevant definitions have been proposed \cite{qi}. The most interesting notion of eigenvalue for our perspective is the notion of $z$-eigenvalue. The properties of eigenvalues and eigenvectors of tensors are much more intricate than for matrices, and relatively poorly understood. The number of z-eigenvalues is typically exponentially large in the number of dimensions \cite{cart} \cite{breid}. 
Another central notion for matrices is the one of resolvent trace. In a theoretical physics perspective, Razvan Gurau proposed a generalization of this object for tensors in \cite{gur1} in 2020. The main result of our work is to develop a moments method to compute the asymptotic of the moments of this resolvent trace for a Wigner tensor. It is a combinatorial perspective of a Wigner's Theorem for random tensors.

\subsection{Model and main results} 

\paragraph{Symmetric tensors.} 
For $p \geq 1$, a real-valued $p$-order tensor is a function 
$$ \T : \{ 1,\ldots,N_1 \} \times \ldots \times \{ 1,\ldots,N_p \} \mapsto \mathbb{R}.$$
The integers $N_j, 1\leq j \leq p$ are called the dimensions of the tensor. The entries of a $p$-order tensor $\mathcal{T}$ will be denoted $\mathcal{T}_{i_1\ldots i_p}$. For the following, all the tensors will be of dimension $N_1=\ldots=N_p= N \geq 1$.

\begin{definition}[Symmetric tensor]
We say that a $p$-order tensor $\mathcal{T}$ is \textit{symmetric} if and only if
$$ \forall i_1,\ldots ,i_p, \forall \sigma \in \mathfrak{S}(p), \ \ \mathcal{T}_{i_1\ldots i_p} = \mathcal{T}_{i_{\sigma(1)}\ldots i_{\sigma(p)}}. $$
We will denote $\mathcal{S}_p(N)$ the set of real-valued symmetric tensors.
\end{definition}

\noindent The orthogonal group $\mathbf{O}(N)$ acts on $\mathcal{S}_p(N)$ as follows: for $\T \in \mathcal{S}_p(N)$ and $U \in \mathbf{O}(N)$,
\[ \Big(U \bullet \mathcal{T} \Big)_{i_1,\ldots i_p} := \sum_{j_1,\ldots j_p} \mathcal{T}_{j_1,\ldots j_p} U_{i_1j_1}\ldots U_{i_pj_p}. \]

\paragraph{Combinatorial maps and invariants.} 
We briefly introduce a notion useful for the following. 
A {\em combinatorial map} $b=(\sigma,\tau)$ is the data of two permutations on a set of halfedges: $\sigma$ an arbitrary permutation and $\tau$ an involution with no fixed point. 
It encodes a graph $G(b)=(V(b),E(b))$ with a cyclic order given on the edges around each vertex. 
The vertices $V(b)$ are the cycles of $\sigma$. The edges $E(b)$ are the cycles of $\tau$, that are the pairs of halfedges $(x,\tau(x))$ matched by $\tau$. More generally, $b=(\sigma,\tau)$ is a {\em combinatorial hypermap} if $\tau$ has cycles of arbitrary length, that define the hyperedges. 
Formal definitions are given in Section \ref{sec:back}. 
We say that $b$ is $p$-regular (or $p$-valent) if any vertex belongs to $p$ edges (all cycles of $\sigma$ have length $p$) and $r$-uniform if any hyperedge contains $r$ vertices (all cycles of $\tau$ have length $r$). 
A combinatorial map is then a $2$-uniform combinatorial hypermap. 

\begin{definition}[Trace invariant]\label{def:TI}
Let $\T \in \mathcal{S}_p(N)$ and $b$ be a $p$-regular combinatorial map. 
We denote $(\delta(v)_1,\ldots,\delta(v)_p)$ the sequence of neighboring edges of a vertex $v$. 
The polynomial in the entries of $\T$,
$$\mathrm{Tr}_b(\T):= \sum_{ 1\leq a_1,\ldots,a_{\vert E(b) \vert} \leq N} \prod_{v \in V(b)} \mathcal{T}_{a_{\delta(v)_1} \ldots a_{\delta(v)_p}} ,$$
is called the trace invariant associated to $b$. 
\end{definition}

Why should we focus on trace invariants? 
First, trace invariants form a complete family of the polynomials (in the entries of a tensor) invariant by the action of the orthogonal group, see Proposition \ref{prop:TI}. 
Second, if we define the balanced single trace invariant of size $n$ as the sum over all the trace invariants associated to rooted connected combinatorial maps with $n$ vertices (this set is denoted $\mathcal{B}_n$), that is
$$ I_n(\T):= \sum_{b \in \mathcal{B}_n} \mathrm{Tr}_b(\T) ,$$
then $\frac{1}{N} I_n(\T)$ is the $n$-th moment of the {\em resolvent trace}, see Proposition \ref{prop:res}. 
This resolvent trace is an object introduced by Gurau in \cite{gur1} which generalizes by many aspects the resolvent trace of matrices, see Section \ref{sec:res}. 
In particular in the matrix case, for $\mathcal{M}$ a real symmetric matrix, 
$$\frac{1}{N}I_n(\mathcal{M})=\frac{1}{N}\mathrm{Tr}(\mathcal{M}^n)$$ as the only connected $2$-regular graph is the cycle.

For $\pi$ a partition on the edges of $b$, we denote $b_{\pi}$ the combinatorial hypermap where we merged the edges of a same block of $\pi$ and we denote $\mathrm{Tr}^0_{b_\pi}(\T)$ the trace invariants when summing on distinct indices $a_1,\ldots,a_{\vert \pi \vert}$ (called "injective traces" in the theory of traffics of Male \cite{male0}). Then we have 
$$ \mathrm{Tr}_b(\T) = \sum_{\pi \in \mathcal{P}(E(b))} \mathrm{Tr}^0_{b_\pi}(\T) . $$ 
We also consider the following dual version. 
If $b=(\sigma,\tau)$, we define 
$$b^\dagger =(\tau, \sigma),$$ 
having vertices the cycles of $\tau$ and hyperedges the cycles of $\sigma$. 
Here $H(b^\dagger)=(V(b^\dagger),E(b^\dagger))$ is a hypergraph.
Indeed, if $b$ is a $p$-regular combinatorial map, then $b^{\dagger}$ is a $p$-uniform $2$-regular hypermap, see Definition \ref{def:hyp}. 

\paragraph{Melonics and hypertrees} 
Finally, we say that a hypergraph is a {\em hypertree} if it has no cycle, see Definition \ref{def:cycle}. 
A {\em double hypertree} is a hypergraph where each hyperedge has multiplicity $2$, and where the reduced hypergraph (obtained after forgetting the multiplicity of the hyperedges) is a hypertree.  
\\ If $H(b_\pi^\dagger)$ is a $p$-uniform double hypertree, then $b$ is called a melonic map and $G(b)$ is a {\em melonic graph}. 
The family of $p$-regular melonic graphs is well known in the physics literature. They can be constructed recursively. 
The only one with two vertices is the {\em melon} graph, it is $2$ vertices linked together by $p$ edges. 
Then, a melonic graph with $2n$ vertices is obtained by switching the endpoint of an edge of a melonic graph with $2(n-1)$ vertices with the endpoint of an edge of the melon graph (this operation is called "insertion" of a melon inside an edge). We invite the reader to see Figure \ref{fig:hd} about the duality between double hypertrees and melonics. 

\paragraph{Wigner tensor.} 
Let $\mathcal{T} \in \mathcal{S}_p(N)$ with entries given by random variables that are independent up to symmetries, centered, with finite moments and the right invariant second moment:
\[ \mathbb{E}\left[ \mathcal{T}^2_{i_1,\ldots,i_p}\right] = \frac{1}{(p-1)!}\prod_{a=1}^N c_a(i_1,\ldots,i_p)!, \]
where $c_a(i_1,\ldots,i_p) = \vert \{ 1\leq k \leq p : i_k=a \} \vert$. 
Remark that this variance parameter is equal to $p$ over the number of distinct permutations of the tuple $(i_1,\ldots ,i_p)$. In particular, for the off-diagonal entries, $i.e.$ when $i_1,\ldots,i_p$ are all distinct, the variance of the entry is $1/(p-1)!$. In fact, for our purposes, only the variance of the tensor off-diagonal entries has to be given by 
\[ \mathbb{E}\left[ \mathcal{T}^2_{i_1,\ldots,i_p}\right] = \frac{1}{(p-1)!}, \]
whatever the other ones. 
Indeed, as in the matrix case only the off-diagonal entries variance do matter in our proofs. 

Then, we define a {\em Wigner tensor} of dimension $N$ as the normalized symmetric random tensor
$$ \mathcal{W}_N:= \frac{\T}{N^{\frac{p-1}{2}}}. $$
We are now ready to state our main results.

\begin{theorem}[Moments convergence] \label{thm:1}
    Let $p\geq 2$ and $\mathcal{W}=\left( \mathcal{W}_{N} \right)_{N\geq 1}$ be a sequence of Wigner tensors of order $p$. 
    For all $n\geq 0$, when $N \rightarrow \infty$,
    \[ \mathbb{E} \left[ \frac{1}{N} I_n \left(\mathcal{W}_N\right)\right] = \mathbb{1}_{\text{n even }} F_p\left( \frac{n}{2} \right) + \mathcal{O}\left( \frac{1}{N}\right) ,\]
    where \[ F_p(k) = \frac{1}{pk+1}\binom{pk+1}{k}\] are the Fuss-Catalan numbers.
\end{theorem}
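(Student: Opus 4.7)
The plan is to mimic the combinatorial moments proof of Wigner's theorem, but expressed in the hypermap/hypergraph language of the paper. First I would combine the scaling $\mathcal{W}_N=\mathcal{T}/N^{(p-1)/2}$ with the injective-trace expansion $\mathrm{Tr}_b(\mathcal{T}) = \sum_\pi \mathrm{Tr}^0_{b_\pi}(\mathcal{T})$, so that
\begin{equation*}
\mathbb{E}\!\left[\frac{1}{N}I_n(\mathcal{W}_N)\right]
=\frac{1}{N^{(p-1)n/2+1}}\sum_{b\in \mathcal{B}_n}\sum_{\pi\in \mathcal{P}(E(b))}\mathbb{E}\!\left[\mathrm{Tr}^0_{b_\pi}(\mathcal{T})\right].
\end{equation*}
Since the entries of $\mathcal{T}$ are centered and independent up to the symmetry of the tensor, the expectation inside factorises along a partition $\mu$ of $V(b_\pi)=V(b)$ whose blocks consist of vertices carrying identical tensor entries; each block must have size at least $2$. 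The off-diagonal variance hypothesis contributes a factor $1/(p-1)!$ per pair block, while the bounded moments hypothesis keeps all higher factors $\mathcal{O}(1)$.

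Summing over the $|\pi|$ distinct indices of the injective trace produces a factor $N^{|\pi|}$, so each configuration $(b,\pi,\mu)$ contributes a term of order $N^{|\pi|-(p-1)n/2-1}$. The crucial combinatorial step is to bound $|\pi|$ through the dual hypergraph $b_\pi^\dagger$, in which $|\pi|$ is the number of vertices and $|\mu|$ the number of distinct hyperedges once the $\mu$-blocks of $V(b_\pi)$ have been identified. An Euler-type inequality for connected $p$-uniform hypergraphs yields
\begin{equation*}
|\pi|\leq (p-1)|\mu|+1,
\end{equation*}
with equality if and only if the reduced hypergraph is a $p$-uniform hypertree. Combined with $|\mu|\leq n/2$ (equality if and only if $\mu$ is a pairing), this gives $|\pi|\leq (p-1)n/2+1$, saturated precisely when $\mu$ is a pairing and $H(b_\pi^\dagger)$ is a $p$-uniform double hypertree, i.e.\ when $b$ is a rooted melonic map. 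In particular, $n$ must be even for the leading term not to vanish.

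To extract the leading coefficient, I would count the saturating configurations via the well-known bijection between rooted $p$-regular melonic maps with $n$ vertices and rooted $p$-ary trees with $n/2$ internal nodes. The latter are counted by $F_p(n/2)$, and each such melonic map carries an ordering factor $(p-1)!^{n/2}$ coming from the arrangement of halfedges inside each melon, which exactly cancels the $(p-1)!^{-n/2}$ coming from the $n/2$ off-diagonal variance factors, leaving $F_p(n/2)$. All other configurations have $|\pi|\leq (p-1)n/2$ and hence individually contribute at most $\mathcal{O}(1/N)$; at fixed $n$ the number of such configurations is independent of $N$ and each moment is bounded by hypothesis, so their total contribution is $\mathcal{O}(1/N)$.

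The main obstacle will be the hypergraph Euler-type inequality of the second step: for $p=2$ it reduces to the classical fact that non-crossing pair partitions have $n/2+1$ blocks, but for $p\geq 3$ it genuinely requires the hypergraph framework of the paper, together with a careful proof that strict inequality holds as soon as $H(b_\pi^\dagger)$ admits a cycle or $\mu$ contains a block of size greater than two.
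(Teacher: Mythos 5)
Your proposal follows essentially the same route as the paper: the injective-trace expansion, the dual-hypergraph Euler inequality $|\pi|\le (p-1)|E_\pi^*|+1$ forcing the leading configurations to be double hypertrees (i.e.\ melonic maps with a pairing of their vertices), and the Fuss--Catalan count in which the $(p-1)!^{n/2}$ ordering factor of the halfedges cancels the $(p-1)!^{-n/2}$ coming from the off-diagonal variances. The only detail left implicit is that, for $p\ge 3$, each rooted melonic map admits exactly \emph{one} saturating partition $\pi$ (the paper checks this via the observation that two hyperedges of a double hypertree sharing $p-1\ge 2$ vertices must coincide), which is what lets you identify the set of saturating configurations with the set of rooted melonic maps before invoking the bijection with $p$-ary trees.
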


The Fuss-Catalan numbers are the moments of a probability measure $\mu^{(p)}_{\infty}$. 
This universal law $\mu^{(p)}_{\infty}$ is a particular free Bessel law \cite{collins}. 
For $p=2$, we find the Wigner semicircle law, with moments given by the Catalan numbers. So this result is universal and generalizes the Wigner's theorem for tensors. 
The generalization even goes further. 
As in the matrix case, the terms $\frac{1}{N}\mathbb{E}\mathrm{Tr}^0_{b_{\pi}}(\mathcal{W}_N)$ does not vanish if and only if $H(b_{\pi}^{\dagger})$ is a double hypertree. As said before, the dual version of double hypertrees are the melonic. Hence, Theorem \ref{thm:1} relies on the following Lemma.

\begin{lemma}\label{lem:tree}
    Let $p \geq 3$, $b$ be a $p$-regular combinatorial map and $\pi$ a partition of its edges. Then, when $N \rightarrow \infty$,
    $$ \frac{1}{N}\mathbb{E} \mathrm{Tr}^0_{b_{\pi}}(\mathcal{W}_N) \rightarrow \alpha \mathbb{1}_{H(b_{\pi}^{\dagger}) \text{ is a double hypertree}}, $$
    or equivalently, 
    $$ \frac{1}{N}\mathbb{E} \mathrm{Tr}_{b}(\mathcal{W}_N) \rightarrow \alpha \mathbb{1}_{G(b) \text{ is a melonic graph}}, $$
    with $\alpha = (p-1)!^{-V(b)/2} = (p-1)!^{-E(b^\dagger)/2}$
\end{lemma}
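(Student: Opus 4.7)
The plan is to expand $\mathrm{Tr}^0_{b_\pi}(\mathcal{W}_N)$ as a sum over $|\pi|$-tuples of pairwise distinct labels $(a_1,\ldots,a_{|\pi|}) \in \{1,\ldots,N\}^{|\pi|}$, one per block of $\pi$, and identify which terms survive the expectation. By independence-up-to-symmetry and centering, the expectation $\mathbb{E}\prod_{v\in V(b)} \mathcal{T}_{a_{\delta(v)_1}\ldots a_{\delta(v)_p}}$ vanishes unless each tensor entry in the product is matched with at least one other entry equal to it as a random variable. Since $\mathcal{T}$ is symmetric and the $a_i$ are pairwise distinct, two entries coincide iff their multisets of indices agree, i.e.\ iff the two corresponding hyperedges of $b_\pi^\dagger$ are equal as multisets of vertices. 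This pattern defines an equivalence relation $\rho$ on $V(b) = E(b_\pi^\dagger)$ whose blocks all have size at least $2$.

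Counting powers of $N$: the sum over distinct indices contributes $N^{|\pi|}(1+O(1/N))$, the normalization $\mathcal{W}_N = \mathcal{T}/N^{(p-1)/2}$ contributes $N^{-|V(b)|(p-1)/2}$, and the outer $1/N$ yields the overall order
\[ \frac{1}{N}\,\mathbb{E}\,\mathrm{Tr}^0_{b_\pi}(\mathcal{W}_N) \;=\; O\bigl( N^{|\pi|-(p-1)|V(b)|/2-1}\bigr), \]
with implicit constant controlled uniformly in $N$ by the bounded-moments hypothesis applied to each group of $\rho$. The heart of the argument is then the sharp combinatorial inequality
\[ |\pi| \;\leq\; 1 + \frac{(p-1)\,|V(b)|}{2}, \]
holding when $b$ is connected, with equality iff $\rho$ is a pair partition (every block has size exactly $2$) and $H(b_\pi^\dagger)$ is a double hypertree. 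This is a hypergraph analog of the Euler bound driving Wigner's theorem: a connected $p$-uniform hypergraph with $E$ distinct hyperedges satisfies $|V|\leq 1+(p-1)E$ with equality iff it is a hypertree; applied to the reduced hypergraph $H(b_\pi^\dagger)/\rho$ (with $|E_{\mathrm{red}}| = |\rho| \leq |V(b)|/2$), this yields the bound above and its extremal characterization.

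Once this is established, non-extremal configurations contribute $O(1/N)$ and are absorbed into the error term. In the extremal case, $\rho$ is uniquely the pairing of duplicate hyperedges of the double hypertree, the expectation factorizes over the $|V(b)|/2$ pairs, and the dominant off-diagonal part of the sum over distinct $a_i$ gives for each pair the factor $\mathbb{E}[\mathcal{T}_{i_1\ldots i_p}^2] = 1/(p-1)!$. This yields the announced $\alpha = (p-1)!^{-|V(b)|/2}$; the diagonal configurations (where some index is repeated inside a single tensor entry) live on a sub-manifold of lower dimension and only contribute to the $O(1/N)$ correction. The equivalent melonic reformulation follows from the duality between $p$-uniform double hypertrees and $p$-regular melonic graphs recalled in the introduction.

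The main obstacle is the sharp combinatorial inequality and, especially, its equality case. Particular care is required when a block of $\pi$ contains several edges incident to the same vertex $v$ of $b$, so that a hyperedge of $b_\pi^\dagger$ carries a repeated vertex and the inductive "budget of $p-1$ new vertices per hyperedge" collapses: one must check that such self-interactions strictly decrease $|\pi|$ below the extremal value, so they only feed the lower-order remainder. A convenient bookkeeping is an induction on $|\rho|$, attaching one reduced hyperedge at a time to a growing connected sub-hypertree and verifying that preserving equality forces each new hyperedge to meet the existing structure at exactly one old vertex, which is precisely the inductive definition of a $p$-uniform hypertree.
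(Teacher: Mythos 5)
Your proposal is correct and follows essentially the same route as the paper: expand over distinct indices per block of $\pi$, use centering and independence-up-to-symmetry to force every hyperedge of $H(b_\pi^\dagger)$ to have multiplicity at least $2$, combine the bound $|E_\pi^*|\leq |V(b)|/2$ with the hypergraph Euler inequality $|\pi|\leq 1+(p-1)|E_\pi^*|$ (equality iff hypertree), and identify the extremal contribution $(1/(p-1)!)^{|V(b)|/2}$ from the off-diagonal variance. The only cosmetic difference is that you prove the Euler bound and its equality case by induction on reduced hyperedges, whereas the paper passes to the bipartite incidence graph and invokes the ordinary Euler formula; your handling of repeated vertices within a hyperedge (length-$1$ cycles) matches the paper's treatment.
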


We emphasize that we can relax the assumptions made on the entries of the Wigner tensors while keeping Theorem \ref{thm:1} and Lemma \ref{lem:tree} valid. Indeed, the convergence in probability towards the same limit still holds if the entries are $i.i.d.$ having a symmetric law with only \emph{$p$ finite moments} (or $p+1$ in the non-$i.i.d.$ case). The proof is given is Section \ref{sec:pmom}.

The derivation of Theorem \ref{thm:1} from Lemma \ref{lem:tree} is essentially a counting argument. 
In particular, for $p\geq 3$ the rooted planar melonic maps, as well as the rooted plane fully directed hypertrees, as well as the non-crossing partition with blocks of size multiple of $p-1$, as well as $(p-1)$-Dyck paths, are all counted by the Fuss-Catalan numbers. 
See Remark \ref{rem:matrice} for the analogy and the slight difference in the matrix case.

Note that these results only depend on the second moment of the entries of the tensor and not greater ones. 
Moreover, the variance of the moments of the resolvent trace can be bounded.

\begin{theorem}[Variance] \label{thm:2}
    Let $p\geq 2$ and $\mathcal{W}=\left( \mathcal{W}_{N} \right)_{N\geq 1}$ be a sequence of Wigner tensors of order $p$. 
    For all $n\geq 0$, when $N \rightarrow \infty$,
    \[ \mathrm{Var} \left[ \frac{1}{N} I_n \left(\mathcal{W}_N\right)\right] = \mathcal{O}\left( \frac{1}{N^2}\right).\]
\end{theorem}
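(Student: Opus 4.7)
The plan is to reduce the variance to a sum of pairwise covariances and then to show each covariance is $O(1)$. Since $I_n = \sum_{b \in \mathcal{B}_n} \mathrm{Tr}_b$ and $|\mathcal{B}_n|$ depends only on $n$,
\[
\mathrm{Var}\!\left[\tfrac{1}{N}I_n(\mathcal{W}_N)\right] = \frac{1}{N^2} \sum_{b_1, b_2 \in \mathcal{B}_n} \mathrm{Cov}\bigl(\mathrm{Tr}_{b_1}(\mathcal{W}_N), \mathrm{Tr}_{b_2}(\mathcal{W}_N)\bigr),
\]
so the theorem follows once $\mathrm{Cov}(\mathrm{Tr}_{b_1}(\mathcal{W}_N), \mathrm{Tr}_{b_2}(\mathcal{W}_N)) = O(1)$ for every pair $(b_1,b_2)$.

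First I would exploit the product identity $\mathrm{Tr}_{b_1}(\mathcal{W}) \mathrm{Tr}_{b_2}(\mathcal{W}) = \mathrm{Tr}_{b_1 \sqcup b_2}(\mathcal{W})$ together with the injective trace decomposition to write
\[
\mathbb{E}\bigl[\mathrm{Tr}_{b_1}(\mathcal{W}_N) \mathrm{Tr}_{b_2}(\mathcal{W}_N)\bigr] = \sum_{\pi \in \mathcal{P}(E(b_1) \sqcup E(b_2))} \mathbb{E}\bigl[\mathrm{Tr}^0_{(b_1 \sqcup b_2)_\pi}(\mathcal{W}_N)\bigr],
\]
and to call $\pi$ \emph{parallel} if every block lies entirely in $E(b_1)$ or in $E(b_2)$ (so $\pi = \pi_1 \sqcup \pi_2$), and \emph{connecting} otherwise. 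By independence of tensor entries with distinct index multisets, each product $\mathbb{E}[\mathrm{Tr}^0_{(b_1)_{\pi_1}}(\mathcal{W}_N)] \mathbb{E}[\mathrm{Tr}^0_{(b_2)_{\pi_2}}(\mathcal{W}_N)]$ differs from $\mathbb{E}[\mathrm{Tr}^0_{(b_1 \sqcup b_2)_{\pi_1 \sqcup \pi_2}}(\mathcal{W}_N)]$ only by coincidence corrections, arising when an index from the first sum coincides with one from the second. A standard inclusion--exclusion reexpresses each correction as a contribution of a connecting partition obtained by merging a block of $\pi_1$ with a block of $\pi_2$, and the covariance thereby collapses to a signed sum of $\mathbb{E}[\mathrm{Tr}^0_{(b_1 \sqcup b_2)_\pi}(\mathcal{W}_N)]$ over connecting partitions only.

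Next, I would apply the topological estimate underlying Lemma \ref{lem:tree} to the disjoint union, yielding
\[
\mathbb{E}\bigl[\mathrm{Tr}^0_{(b_1 \sqcup b_2)_\pi}(\mathcal{W}_N)\bigr] = O\!\left(N^{|\pi| - (p-1)V/2}\right), \qquad V := V(b_1) + V(b_2),
\]
with $|\pi|$ bounded above by $(p-1)V/2$ plus the number of connected components of $H((b_1 \sqcup b_2)_\pi^\dagger)$; equality characterizes disjoint unions of double hypertrees. A parallel $\pi$ may reach two components, giving terms up to $O(N^2)$, while a connecting $\pi$ is forced to a single component, so $|\pi| \leq (p-1)V/2 + 1$ and its contribution is at most $O(N)$.

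The main obstacle will be carrying out the cancellation cleanly. Matching the $O(N^2)$ parallel terms against $\mathbb{E}[\mathrm{Tr}_{b_1}(\mathcal{W}_N)] \mathbb{E}[\mathrm{Tr}_{b_2}(\mathcal{W}_N)]$ at leading order is direct, but to reach $O(1)$ one must further show that every connecting partition $\pi$ attaining $|\pi| = (p-1)V/2+1$, that is, producing a single $p$-uniform double hypertree on $b_1 \sqcup b_2$, is exactly compensated by a signed coincidence correction from a parallel pair $(\pi_1,\pi_2)$. This is the tensor incarnation of the classical moment--cumulant relation (equivalently, the statement that the covariance of two traces receives only "connected" contributions of higher genus), and making the bijection with matching signs and multiplicities explicit is the combinatorial heart of the argument. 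Once it is in place, only connecting partitions with strictly smaller $|\pi|$ remain, and each contributes $O(1)$ by the same hypergraph counting as in Lemma \ref{lem:tree}.
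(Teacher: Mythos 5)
Your reduction of the variance to pairwise covariances, the expansion of $\mathbb{E}[\mathrm{Tr}_{b_1}\mathrm{Tr}_{b_2}]$ over partitions of $E(b_1)\sqcup E(b_2)$, and the Euler-formula bookkeeping (two components give at most $N^{2}$ after normalization, one component at most $N$) all match the structure of the paper's proof of Lemma \ref{lem:var}. But you stop exactly at the step that matters: you obtain $O(N)$ for the connecting (connected) configurations and then \emph{announce}, without proof, that the extremal connecting partitions --- those for which $H((b_1\sqcup b_2)_\pi^\dagger)$ is a single $p$-uniform double hypertree --- must be cancelled by signed coincidence corrections coming from parallel pairs. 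That cancellation is not established, and it is also not the mechanism by which the paper closes the gap. The paper proves that such extremal connected configurations \emph{do not exist}: if the two hypergraphs $H_1=H(b_{\pi}^\dagger)$ and $H_2=H(d_{\eta}^\dagger)$ share a hyperedge $e$ and their union is a double hypertree, then $m_{H_1}(e)=m_{H_2}(e)=1$; but each vertex of $H_1$ lies in an \emph{even} number of hyperedges of $H_1$ (because $b^\dagger$ is $2$-regular), so a vertex of $e$ must lie in another hyperedge $f\neq e$ of multiplicity $1$ in $H_1$, which must then also have multiplicity $1$ in $H_2$, and the hypertree structure lets one propagate this forever through fresh vertices --- a contradiction in a finite hypergraph. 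Without this (or an actual proof of your proposed bijection, which would in fact be vacuous since the set it is meant to pair off is empty), your argument only yields $\mathrm{Var}=O(1/N)$, not $O(1/N^2)$.

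A secondary, smaller issue: in your inclusion--exclusion setup the parallel partitions should cancel against the product of expectations \emph{exactly}, i.e. the covariance equals $\sum_{\pi\ \mathrm{connecting}}\sum_{\mathrm{distinct\ indices}}\bigl(\mathbb{E}[\Pi_1\Pi_2]-\mathbb{E}[\Pi_1]\mathbb{E}[\Pi_2]\bigr)$, with each summand vanishing unless the two factors share a common entry. Saying you match the parallel terms ``at leading order'' undersells this and leaves unaccounted the order-$N$ parallel terms (one melonic factor times one unicyclic factor), which the paper handles explicitly. Tightening that statement to an exact identity, and then supplying the parity argument above for the connected case, would complete the proof.
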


\noindent Again, the Theorem relies on the following Lemma.

\begin{lemma}\label{lem:var}
Let $b$ and $b'$ be two $p$-regular combinatorial map with $\vert V(b)\vert = \vert V(b') \vert$. Then, when $N \rightarrow \infty$,
    \[ \vert \mathbb{E} \left[ \mathrm{Tr}_{b}(\mathcal{W}_N)\mathrm{Tr}_{b'}(\mathcal{W}_N)\right] - \mathbb{E} \left[ \mathrm{Tr}_{b}(\mathcal{W}_N)\right] \mathbb{E}\left[\mathrm{Tr}_{b'}(\mathcal{W}_N)\right] \vert = \mathcal{O}\left( 1 \right) .\]
\end{lemma}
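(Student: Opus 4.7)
The plan is to expand both $\mathbb{E}[\mathrm{Tr}_b(\mathcal{W}_N)\mathrm{Tr}_{b'}(\mathcal{W}_N)]$ and $\mathbb{E}[\mathrm{Tr}_b(\mathcal{W}_N)]\mathbb{E}[\mathrm{Tr}_{b'}(\mathcal{W}_N)]$ through the injective-trace decomposition and exhibit explicit cancellations bringing the naive $O(N^2)$ bound down to $O(1)$, in close analogy with the matrix variance estimate for the Wigner semicircle. Writing $X=\mathrm{Tr}_b(\mathcal{W}_N)$ and $Y=\mathrm{Tr}_{b'}(\mathcal{W}_N)$, the product $XY$ is the trace invariant of the disjoint union $b\sqcup b'$, so
\[ \mathbb{E}[XY] = \sum_{\pi \in \mathcal{P}(E(b) \sqcup E(b'))} \mathbb{E}\mathrm{Tr}^0_{(b \sqcup b')_\pi}(\mathcal{W}_N), \qquad \mathbb{E}[X]\mathbb{E}[Y] = \sum_{\pi_1,\pi_2} \mathbb{E}\mathrm{Tr}^0_{b_{\pi_1}} \mathbb{E}\mathrm{Tr}^0_{b'_{\pi_2}}. \]
A partition $\pi$ will be called \emph{disconnected} if $\pi = \pi_1\sqcup\pi_2$ with $\pi_1$ a partition of $E(b)$ and $\pi_2$ of $E(b')$, and \emph{connected} otherwise. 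For disconnected $\pi$, joint injectivity of indices forces the entries used on each side to have disjoint index multisets, hence to be independent by the symmetric-independence hypothesis; combined with the i.i.d.-up-to-symmetries assumption this yields $\mathbb{E}\mathrm{Tr}^0_{(b\sqcup b')_{\pi_1\sqcup\pi_2}} = (N)_{|\pi_1|+|\pi_2|}M_b(\pi_1)M_{b'}(\pi_2)$, where $M_b(\pi_i)$ is the combinatorial constant making $\mathbb{E}\mathrm{Tr}^0_{b_{\pi_i}} = (N)_{|\pi_i|}M_b(\pi_i)$. Hence
\[ \mathrm{Cov}(X,Y) = \sum_{\pi_1,\pi_2}\bigl[(N)_{|\pi_1|+|\pi_2|} - (N)_{|\pi_1|}(N)_{|\pi_2|}\bigr]M_b(\pi_1)M_{b'}(\pi_2) + \sum_{\pi\text{ connected}}\mathbb{E}\mathrm{Tr}^0_{(b\sqcup b')_\pi}, \]
and the $O(N^2)$ coefficients, corresponding to the pairs $(\pi_1,\pi_2)$ whose duals are both double hypertrees (the dominant terms of Lemma~\ref{lem:tree}), cancel automatically inside the first sum because $(N)_{a+b}$ and $(N)_a(N)_b$ share the leading monomial $N^{a+b}$.

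The central step will be the cancellation of the subleading $O(N)$ terms. Using the Vandermonde-type identity $(N)_a(N)_b = \sum_{k\geq 0}\binom{a}{k}\binom{b}{k}k!\,(N)_{a+b-k}$, the $k$-th term of the distinctness correction combinatorially encodes a one-to-one matching of $k$ blocks of $\pi_1$ with $k$ blocks of $\pi_2$. On the connected side I would call $\pi$ a \emph{simple $k$-merger} if it is obtained from some $(\pi_1,\pi_2)$ by such a one-to-one identification of $k$ pairs of blocks and if, in addition, no resulting identification causes a tensor entry at a vertex $v\in V(b)$ to coincide as a random variable with one at a vertex $w\in V(b')$. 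The Wigner-tensor symmetry then guarantees that the multiset of incident blocks at every vertex is unchanged by a simple merger, so $M_\pi = M_b(\pi_1)M_{b'}(\pi_2)$; combined with the multiplicity $\binom{|\pi_1|}{k}\binom{|\pi_2|}{k}k!$, the simple-merger contributions cancel the corresponding distinctness-correction term exactly.

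The residual sum then runs over connected $\pi$ that are either (a) \emph{non-simple} -- some cross-identification merges a pair of tensor entries on opposite sides into a single random variable, introducing a factor $\mathbb{E}[\mathcal{W}^q]$ with $q\geq 4$ in place of $\mathbb{E}[\mathcal{W}^2]^2$ -- or (b) \emph{deeper} -- some block of $\pi$ fuses more than two blocks of $\pi_1\cup\pi_2$. In case (a) each induced coincidence both lowers the number of free indices and replaces $\mathbb{E}[\mathcal{W}^2]^2 \sim N^{-2(p-1)}$ by $\mathbb{E}[\mathcal{W}^q] = O(N^{-q(p-1)/2})$, the missing factor $N^{-(p-1)(q-2)/2}$ absorbing the surplus $N$ that a naive count would allow; the finite-moment hypothesis on $\mathcal{T}$ keeps all prefactors bounded. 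In case (b) a dimensional argument mirroring the one behind Lemma~\ref{lem:tree} forces $|\pi|\leq (V(b)+V(b'))(p-1)/2$, so each contribution is $O(1)$ and the finite sum is likewise $O(1)$. The hard part will be verifying the simple-merger bijection precisely: one must check that whenever the $k$ block identifications do not create a new entry coincidence, both the combinatorial weight $M$ and the index-counting multiplicity match exactly those of the $k$-th term in the Vandermonde correction, a fact which ultimately rests on the observation that a Wigner-tensor entry is determined solely by the multiset of its indices, so that a simple merger is invisible at the entry level.
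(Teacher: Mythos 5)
Your overall architecture matches the paper's: expand $\mathbb{E}[\mathrm{Tr}_b\mathrm{Tr}_{b'}]$ over partitions of $E(b)\sqcup E(b')$, let the "disconnected" contributions cancel against $\mathbb{E}[\mathrm{Tr}_b]\mathbb{E}[\mathrm{Tr}_{b'}]$, and bound the "connected" remainder. Your Vandermonde bookkeeping $(N)_a(N)_b=\sum_k\binom{a}{k}\binom{b}{k}k!\,(N)_{a+b-k}$, matched against the simple $k$-mergers, is in fact a more explicit treatment of the falling-factorial discrepancy than the paper gives (the paper absorbs it into the statement that non-merging coincidence patterns reproduce the product of expectations exactly). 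That part of your plan is sound: a cross-identification of index blocks that does not make two entries share the same index multiset leaves every entry off-diagonal and keeps the two sides independent, so the weight is unchanged and the cancellation is exact.

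The gap is in the connected case, and it is exactly where the lemma is hardest. First, the mechanism you invoke in case (a) is illusory: when an entry of multiplicity $m$ on the $b$ side merges with one of multiplicity $m'$ on the $b'$ side, $\mathbb{E}[\mathcal{W}^{m}]\mathbb{E}[\mathcal{W}^{m'}]$ is replaced by $\mathbb{E}[\mathcal{W}^{m+m'}]$, and \emph{both} are $\Theta(N^{-(p-1)(m+m')/2})$ — total multiplicity is conserved, so there is no gain of $N^{-(p-1)(q-2)/2}$ from the moments; all the gain must come from the loss of free indices, i.e.\ from the hypergraph Euler inequality applied to the \emph{merged} hypergraph $H=H(b_{\pi_1}^\dagger)\cup H(b'_{\pi_2}^\dagger)$. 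Second, that inequality only gives $|V(H^*)|\le 1+(p-1)n$ for a connected merged hypergraph, hence $\mathcal{O}(N)$ after normalization; your case (b) asserts the bound $(p-1)n$ \emph{without} the $+1$, which is precisely the statement that needs proof. Moreover, you cannot reduce to per-side Euler bounds: the dangerous configurations are those where some reduced hyperedge has multiplicity $1$ in each of $H(b_{\pi_1}^\dagger)$ and $H(b'_{\pi_2}^\dagger)$ (so each individual trace vanishes and there is nothing to cancel against), yet multiplicity $2$ in the union. The missing ingredient is the paper's parity argument: if the union were a double hypertree with a shared hyperedge $e$ (so $m_{H_1}(e)=m_{H_2}(e)=1$), then since every vertex of $H(b_{\pi_1}^\dagger)$ has even degree (each edge of $b$ has degree $2$ in $b^\dagger$), every vertex of $e$ must lie on another shared hyperedge, and propagating this along the hypertree away from $e$ never terminates — a contradiction. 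This excludes the equality case of Euler's formula for connected unions and is what turns the $\mathcal{O}(N)$ bound into the claimed $\mathcal{O}(1)$. Without it (or an equivalent), your argument only yields $\mathrm{Cov}=\mathcal{O}(N)$, which is insufficient for Theorem~\ref{thm:2}.
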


\noindent Interestingly, it is not true in general that $\mathbb{E} \left[ \mathrm{Tr}_{b}(\mathcal{W}_N)\mathrm{Tr}_{b'}(\mathcal{W}_N)\right]$ and $\mathbb{E} \left[ \mathrm{Tr}_{b}(\mathcal{W}_N)\right] \mathbb{E}\left[\mathrm{Tr}_{b'}(\mathcal{W}_N)\right]$ are always equivalent at large $N$, but it is then at least the case when $\mathbb{E} \left[ \mathrm{Tr}_{b}(\mathcal{W}_N)\right] \mathbb{E}\left[\mathrm{Tr}_{b'}(\mathcal{W}_N)\right]$ is of order $N^2$ (two melonics) or $N$ (one melonic and one of order $1$). 

We may furthermore characterize the limit law $\mu^{(p)}_{\infty}$.

\begin{corollary}[Universal measure]
    The universal measure $\mu^{(p)}_{\infty}$ has odd moments equal to zero and even moments given by the Fuss-Catalan numbers. 
    It has a compact support on 
    \[ \left[ -\sqrt{\frac{p^p}{(p-1)^{p-1}}},\sqrt{\frac{p^p}{(p-1)^{p-1}}} \right] \] 
    and it can be expressed explicitly using hypergeometric functions (see Section \ref{sec:mes}). 
    Its Cauchy-Stieltjes transform, denoted $\mathcal{R}_{\infty}(z):= \sum_{k\geq 0} \frac{F_p(k)}{z^{2k+1}}= \int \frac{d \mu_\infty (\lambda)}{z-\lambda} $, satisfies the identity:
\begin{equation}\label{eq:res}
    z^{p-2} \mathcal{R}_{\infty}(z)^p - z\mathcal{R}_{\infty}(z) +1 = 0.
\end{equation}
\end{corollary}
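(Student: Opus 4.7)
The measure $\mu^{(p)}_{\infty}$ is defined as the law whose moments are the limits appearing in Theorem \ref{thm:1}, so the first assertion (odd moments vanish, even moments equal the Fuss--Catalan numbers) is essentially tautological. I would begin by checking that such a measure exists and is unique: the Fuss--Catalan numbers $F_p(k)$ satisfy $F_p(k)^{1/(2k)} \to \bigl(p^p/(p-1)^{p-1}\bigr)^{1/2}$, so Carleman's condition holds and any probability measure with these moments is uniquely determined. Existence will follow a posteriori from the explicit density constructed in Section \ref{sec:mes}; alternatively, since the $\frac{1}{N} I_n(\mathcal{W}_N)$ come from an actual tensor model and Theorem \ref{thm:1} plus Theorem \ref{thm:2} give convergence in probability of moments with bounded growth, one gets a limiting probability measure for free.

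The core algebraic part is the functional equation (\ref{eq:res}) for the Cauchy--Stieltjes transform. The plan is to derive it from the well-known generating-function identity
\[ B_p(x) = 1 + x\, B_p(x)^p, \qquad B_p(x) := \sum_{k \geq 0} F_p(k)\, x^k, \]
which itself follows from Lagrange inversion applied to $F_p(k) = \frac{1}{pk+1}\binom{pk+1}{k}$ (or, equivalently, from the recursive ``melon insertion'' decomposition of rooted planar melonic maps already recalled in the Melonics paragraph). Since
\[ \mathcal{R}_{\infty}(z) = \frac{1}{z}\sum_{k \geq 0} F_p(k)\, z^{-2k} = \frac{1}{z}\, B_p(z^{-2}), \]
writing $y := z\, \mathcal{R}_{\infty}(z)$ and $x := z^{-2}$ turns $B_p(x) = 1 + x B_p(x)^p$ into $y = 1 + z^{-2} y^p$, which rearranges to $z^{p-2}\mathcal{R}_{\infty}(z)^p - z\mathcal{R}_{\infty}(z) + 1 = 0$.

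Once (\ref{eq:res}) is established, the support is read off from the singularities of the algebraic curve. I would parametrize via $y = z\mathcal{R}_{\infty}(z)$, so that $z^2 = y^p/(y-1)$, and study the real map $y \mapsto y^p/(y-1)$ on $(1,\infty)$: its derivative $\bigl(py^{p-1}(y-1) - y^p\bigr)/(y-1)^2$ vanishes only at $y^\star = p/(p-1)$, giving the minimum
\[ (z^\star)^2 = \frac{(p/(p-1))^p}{1/(p-1)} = \frac{p^p}{(p-1)^{p-1}}. \]
For $|z| > z^\star$ the equation has two real branches and one selects the one with $\mathcal{R}_{\infty}(z) \sim 1/z$ at infinity; for $|z| < z^\star$ the branches become complex, producing the square-root singularities that bound the support. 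Since $\mu^{(p)}_{\infty}$ is symmetric and determined by its moments, its support is exactly $[-z^\star, z^\star]$ as claimed.

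The explicit hypergeometric formula for the density is where the main technical work lies; I would defer the detailed computation to Section \ref{sec:mes} but outline it as follows. Apply Lagrange inversion to $y - 1 = z^{-2} y^p$ to obtain a series expansion of $\mathcal{R}_{\infty}(z)$ valid near $z^\star$, then invoke the Stieltjes--Perron inversion formula $d\mu_\infty(\lambda) = -\frac{1}{\pi} \lim_{\varepsilon \downarrow 0} \mathrm{Im}\, \mathcal{R}_{\infty}(\lambda + i\varepsilon)\, d\lambda$ and recognize the resulting series in $\lambda$ as a ratio of Gamma functions of the form that defines a generalized hypergeometric function ${}_{p-1}F_{p-2}$, as is standard for free Bessel / Fuss--Catalan laws \cite{collins}. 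The main obstacle is purely bookkeeping: keeping track of the correct branch of the $p$-th root and matching the coefficients to the hypergeometric Gamma products; no new combinatorial input beyond (\ref{eq:res}) is needed.
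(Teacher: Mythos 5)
Your proposal is correct, and its algebraic core — writing $\mathcal{R}_{\infty}(z)=\tfrac1z B_p(z^{-2})$ and substituting into $B_p(x)=1+xB_p(x)^p$ to get \eqref{eq:res} — is exactly the paper's derivation. The surrounding parts take a somewhat different route. For the support, you locate the branch points of the algebraic curve $z^2=y^p/(y-1)$ at $y^\star=p/(p-1)$; the paper performs the equivalent computation on the generating series itself ($z(T_p)=T_p^{1-p}-T_p^{-p}$, critical at $T_p=p/(p-1)$, giving $z_c=(p-1)^{p-1}/p^p$ and $\omega_c=\sqrt{1/z_c}$), so these are the same calculation in different clothing. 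For the density, you propose Lagrange inversion plus Stieltjes--Perron inversion, whereas the paper imports the Penson--\.{Z}yczkowski integral representation $F_p(n)=\int_0^{1/z_c}x^nP_p(x)\,dx$ with $P_p$ hypergeometric, and obtains $\mu_{\infty}(y)=|y|P_p(y^2)$ by the elementary change of variable $x=y^2$ and a parity argument; the paper's route is shorter and avoids branch-tracking, at the cost of citing the explicit $P_p$. Your addition of a Carleman-type moment-determinacy check is a genuine (and welcome) supplement not present in the paper. One small point to tighten: asserting that the support is \emph{exactly} $[-z^\star,z^\star]$ from the branch-point analysis requires knowing that $\mathrm{Im}\,\mathcal{R}_{\infty}$ is nonzero throughout the open interval (equivalently that the density is a.e.\ positive there); the paper gets this from the positivity of $P_p$ on $(0,1/z_c)$, so you should either invoke that or verify that the selected branch stays nonreal on all of $(-z^\star,z^\star)\setminus\{0\}$.
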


Equation \eqref{eq:res} generalizes the one known in the matrix case. Moreover, this free Bessel law also appears as the limiting law in the context of a product of $p-1$ Ginibre matrices \cite{collins}. 
It also appears (for the same reason) in the context of Muttalib-Borodin gas, see \cite{forresterwang}. 
Further works may be done to try to understand these links. 
For $p=3$, this law has the following profile.

\begin{figure}[H]
    \centering
    \includegraphics[scale=0.6]{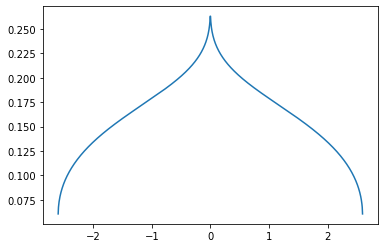}
    \caption{Limit measure for $p=3$.} \label{fig_Wig}
\end{figure}

It is an open question that the resolvent trace of a tensor $\T$ is the Cauchy-Stieltjes transform probability measure $\mu_{\mathcal{T}}$. In this case, we would have $\int \lambda^n d\mu_{\mathcal{T}}(\lambda) =  \frac{I_n(\mathcal{T})}{N}$, then Theorem \ref{thm:1} and Theorem \ref{thm:2} imply $\mu_{\mathcal{W}} \overset{\mathrm{weak}}{\underset{N \rightarrow \infty}{\longrightarrow}} \mu^{(p)}_{\infty}$.

Moreover, the convergence of the moments towards the ones of the universal law is quite robust. 
Indeed, we will show that in the Gaussian case, the moments of a tensor contracted by a vector $k$-times still converge to the moments of the universal law dilated, although we have no longer independent entries after contraction. 

\begin{definition}[Contraction by vectors]\label{def:contraction}
    Let $\mathcal{T} \in \mathcal{S}_p(N)$ and $u^{(1)},\ldots ,u^{(k)}$ be $k$ vectors of $\mathbb{R}^N$ with $k\leq p$. 
    We define the \textit{contraction} $\mathcal{T}\cdot (u^{(1)},\ldots ,u^{(k)})$ as the following $(p-k)$-order tensor
    \[\Big(\mathcal{T}\cdot (u^{(1)},\ldots ,u^{(k)}) \Big)_{i_{k+1},\ldots ,i_p} = \sum_{i_1,\ldots ,i_k}u^{(1)}_{i_1}\ldots u^{(k)}_{i_k} \mathcal{T}_{i_1,\ldots ,i_p} . \]
\end{definition}

\noindent In particular if the contracting vectors are the same, we will denote
\[\mathcal{T}\cdot u^k := \mathcal{T}\cdot \underbrace{(u,\ldots ,u)}_{\text{k times}}. \]

\begin{theorem}[Limit law for the contracted tensor]\label{thm:contr}
    Let $p\geq 3$, $\mathcal{W} \in \mathcal{S}_p(N)$ be a sequence of random tensors belonging to the \textit{Gaussian Orthogonal Tensor Ensemble} (Definition \ref{defW}) and let $u \in \mathbb{S}^{N-1}$ be a sequence of deterministic unit vectors. 
    For $k\leq p-2$, we define $\widetilde{W} := N^{k/2} \mathcal{W} \cdot u^k \in \mathcal{S}_{p-k}(N)$ the normalized contraction of $\mathcal{W}$ by $u^{\otimes k}$. 
    Then, for all $n\geq 0$,
    \[ \frac{1}{N}I_n(\widetilde{W}) \underset{N \longrightarrow \infty}{\rightarrow} \int \lambda^n d \widetilde{\mu}_{\infty}(\lambda), \]
    where 
    \[ \widetilde{\mu}_{\infty}(y) = \sqrt{\binom{p-1}{k}} \mu^{(p-k)}_{\infty}\left(y\sqrt{\binom{p-1}{k}}\right), \]
    with $\mu^{(p-k)}_{\infty}$ the Wigner-Gurau law of order $p-k$, and $\widetilde{\mu}_{\infty}$ supported on 
    \[ \left[-\sqrt{\frac{(p-k)^{p-k}}{\binom{p-1}{k}(p-k-1)^{p-k-1}}}, \sqrt{\frac{(p-k)^{p-k}}{\binom{p-1}{k}(p-k-1)^{p-k-1}}} \right] \]
\end{theorem}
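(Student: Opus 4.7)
The plan is to use the orthogonal invariance of the GOTE to reduce to a concrete slice of $\mathcal{W}$ and then apply Theorem~\ref{thm:1} at order $p-k$.

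A direct calculation gives $(U \bullet \mathcal{W}) \cdot u^k = U \bullet \bigl(\mathcal{W} \cdot (U^\top u)^k\bigr)$ for every $U \in \mathbf{O}(N)$. Since $U \bullet \mathcal{W}$ has the same law as $\mathcal{W}$ and $I_n$ is $\mathbf{O}(N)$-invariant, the distribution of $\frac{1}{N} I_n(\widetilde{W})$ depends on $u$ only through $\|u\|=1$. Taking $U$ with $U u = e_1$, we may assume $u = e_1$, so that
\[ \widetilde{W}_{i_1,\ldots,i_{p-k}} \;=\; N^{k/2}\, \mathcal{W}_{\underbrace{1,\ldots,1}_{k},\,i_1,\ldots,i_{p-k}}. \]
The resulting object is a centered symmetric Gaussian tensor of order $p-k$ whose entries are mutually independent up to its symmetric-tensor symmetry. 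Evaluating the GOTE covariance on the multiset $\{1^k, i_1,\ldots,i_{p-k}\}$, every entry with $i_1,\ldots,i_{p-k}$ pairwise distinct and different from~$1$ has variance $\frac{k!}{(p-1)!}\, N^{-(p-k-1)} = \frac{1}{\binom{p-1}{k}(p-k-1)!}\, N^{-(p-k-1)}$. Hence $\sqrt{\binom{p-1}{k}}\,\widetilde{W}$ has exactly the off-diagonal variance of a Wigner tensor of order $p-k$.

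This rescaled tensor would fall under Theorem~\ref{thm:1} directly, except that the $O(N^{p-k-1})$ entries whose index tuple meets the value~$1$ carry a different (but bounded) variance. Rerunning the argument of Lemma~\ref{lem:tree} at order $p-k$, the melonic / double-hypertree classification is unchanged, and the exceptional entries enter only through injective-trace terms in which at least one $a$-index is pinned to~$1$; pinning one summation index costs an overall factor $N^{-1}$ relative to a generic summand. Therefore
\[ \mathbb{E}\!\left[\frac{1}{N} I_n\!\Big(\sqrt{\tbinom{p-1}{k}}\,\widetilde{W}\Big)\right] \;=\; \mathbb{1}_{n\text{ even}}\, F_{p-k}(n/2) \;+\; \mathcal{O}(N^{-1}). \]
By the homogeneity $I_n(aT) = a^n I_n(T)$ the left-hand side equals $\binom{p-1}{k}^{n/2}\mathbb{E}\bigl[\frac{1}{N}I_n(\widetilde{W})\bigr]$, and a one-line change of variable identifies $\mathbb{1}_{n\text{ even}} F_{p-k}(n/2)/\binom{p-1}{k}^{n/2}$ as $\int \lambda^n\, d\widetilde{\mu}_\infty(\lambda)$, proving the claimed moment convergence; the support of $\widetilde{\mu}_\infty$ is inherited by the same dilation from the support of $\mu^{(p-k)}_\infty$.

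The main technical obstacle is the middle step: certifying that the exceptional-variance entries (those involving the index~$1$) do not perturb the leading order in the hypergraph counting of Lemma~\ref{lem:tree}. This is a bookkeeping task with no new conceptual input beyond the unmodified Wigner case, but it must be tracked carefully through the injective-trace decomposition in order to justify the announced $\mathcal{O}(N^{-1})$ remainder and to exclude any subtle enhancement of non-melonic contributions coming from the repeated index~$1$.
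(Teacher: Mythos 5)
Your proposal is correct and follows essentially the same route as the paper: reduce to $u=e_1$ by orthogonal invariance of the GOTE, compute the variance $\tfrac{k!}{(p-1)!}N^{-(p-k-1)}=\tfrac{1}{\binom{p-1}{k}(p-k-1)!}N^{-(p-k-1)}$ of the generic slice entries, and rerun the Lemma~\ref{lem:tree} counting at order $p-k$ with the dilation factor $\binom{p-1}{k}^{-n/2}$. The "exceptional entries" bookkeeping you flag is precisely the paper's $A+B$ split, where the terms with a summation index pinned to $1$ are bounded by $\mathcal{O}(N^{-1})$ exactly as you describe.
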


This makes the link with the study of random tensors considering contractions of them. In the particular case $k=p-2$ where the contraction is a matrix, it gives a result of Couillet, Comon, Goulart \cite[Theorem 2]{couillet} as explained in Remark \ref{rem-cou}. It would be interesting to derive this result also without the Gaussian assumption and study the case where the contracting vectors are not all the same, as done in the matrix case by Au and Garza-Vargas \cite{au2023spectralasymptoticscontractedtensor}.

\subsection{Going further} 
We are convinced that the objects and the methods displayed in this paper give the opportunity to study several questions inspired by the ones existing in the field of random matrices.
\begin{enumerate}
    \item This combinatorial approach of the moments opens a very interesting path to develop a free probability theory for random tensors. We are currently working in this perspective.
    \item An interesting extension would be to get a central limit theorem for tensors. We expect that it will follow quite naturally after this work.
    \item It would also be interesting in the future to obtain concentration inequalities for $\mathrm{Tr}_b(\T)$.
    \item Another open question is if there exists a link between the limiting measure and a notion of eigenvalues for tensor, in particular the $z$-eigenvalues. In the matrix case, the Wigner semi-circle law gives the asymptotic distribution of the eigenvalues. The $z$-eigenvalues are still the critical points of the energy involved in the definition of the resolvent trace but their relation with the limiting spectral measure is not understood. We are not going to address this point in this work 
\end{enumerate}

\subsection{Organization of the paper} 
The second part provides the background about tensors and invariants. In the third part, we present the proofs of our results.

\subsection{Acknowledgements}
I would like to warmly thank Charles Bordenave and Djalil Chafaï for guiding me through this topic and for all the fruitful discussions with them. I am also grateful to Benoit Collins for his hospitality in Kyoto at the early stage of this work and to Camille Male for his hospitality in Bordeaux and for their informed advice. Last but not least, I thank Alexis Imbert for all the great discussions about this project.

%%%%%%%%%%%%%%%%%%%%%%%%%%%%%%%%%%%%%%%%%%%%%%%%%%%%%%%%%%%%%%%%

\section{Tensors and invariants} \label{sec:back}

\subsection{Generalities about tensors}

We first recall some definitions about random tensors, some of them have already appeared in the introduction. 
Fix $p\geq 1$ and $N\geq 1$. 
A tensor $\T \in (\R^N)^{\otimes p}$ is said symmetric if for all indices $i_1,\ldots,i_p$ and all permutation $\sigma \in \mathfrak{S}(p)$, 
$$\T_{i_1,\ldots,i_p}=\T_{i_{\sigma(1)},\ldots,i_{\sigma(p)}}.$$
We denote $\mathcal{S}_p(N)$ the set of real symmetric tensors of order $p$ and dimension $N$.

The contraction of the $r$-th leg of a $p$-order tensor $\T$ with the $r'$-th leg of a $p'$-order tensor $\T'$ is the $(p+p'-2)$-order tensor given by
$$ (\T \cdot \T')_{i_1,\ldots,i_{p+p'-2}} = \sum_{j=1}^N \T_{i_1,\ldots,i_{r-1},j,i_{r},\ldots,i_{p-1}} \T'_{i_p,\ldots,i_{p+r'-2},j,i_{p+r'-1},\ldots,i_{p+p'-2}} .$$
When $\T \in \mathcal{S}_p(N)$ and we contract the $k$ first legs by tensors of order $1$ ($i.e.$ vectors), we retrieve the contraction by vectors as in Definition \ref{def:contraction}. 
When we contract each one of the $p$ legs by an orthogonal matrix $U \in \mathbf{O}(N)$, it is the following multilinear transform.

\begin{definition}[Multilinear transform]\label{mt}
    Let $\T \in \mathcal{S}_p(N)$ and $U \in \mathbf{O}(N)$. We define
    \[ U \bullet \Big(\mathcal{T} \Big)_{i_1,\ldots i_p} := \sum_{j_1,\ldots j_p} \mathcal{T}_{j_1,\ldots j_p} U_{i_1j_1}\ldots U_{i_pj_p} . \]
\end{definition}

\noindent We also have a scalar product on $\mathcal{S}_p(N)$ given by the contraction of the $p$ legs of a tensor $\T$ with the $p$ ones of $\T'$,
\[ \langle \mathcal{T}, \mathcal{T'} \rangle := \sum_{j_1,\ldots j_p} \mathcal{T}_{j_1,\ldots j_p} \mathcal{T'}_{j_1,\ldots j_p} , \]
and the Euclidian (or Frobenius) norm is 
$$\| \mathcal{T} \|_F:= \sqrt{\langle \mathcal{T}, \mathcal{T} \rangle}.$$

\begin{remark}
A $z$-eigenpair for $\T$ is a pair $(\lambda,u)$ such that $u$ is a unit vector and 
$$ \T \cdot u^{p-1} = \lambda u. $$
Remark also that if we denote $u^{(1)}\otimes\ldots \otimes u^{(p)}$ the tensor with coordinates $(u^{(1)}\otimes\ldots \otimes u^{(p)})_{i_1,\ldots ,i_p} = u^{(1)}_{i_1}\ldots u^{(p)}_{i_p}$, it is immediate to check that 
\[\langle \mathcal{T},u^{(1)}\otimes\ldots \otimes u^{(p)} \rangle = \langle \mathcal{T}\cdot (u^{(1)},\ldots ,u^{(p-1)}), u^{(p)} \rangle = \mathcal{T}\cdot (u^{(1)},\ldots ,u^{(p)}). \]
Hence the largest eigenvalue of $\T$ is the maximum for $u \in \mathbb{S}^{N-1}$ of $\langle \mathcal{T},u^{\otimes p} \rangle$. 
\end{remark}

Finally, let $\mathcal{T} \in \mathcal{S}_p(N)$ and let us consider a polynomial in the entries of $\mathcal{T}$:
\[ P(\mathcal{T}) = \sum_{k=0}^{D} \sum_{a = (a^i_v)_{\genfrac{}{}{0pt}{}{1\leq v \leq k}{1 \leq i \leq p}}} P_a \prod_{v=1}^k \mathcal{T}_{a^1_v \ldots  a^p_v}\]
where $P_a \in \R$ and the sum runs over $a \in \{1,\ldots ,N \}^{k \times p}$.

\begin{definition}[Invariant]\label{def:inv}
    Such a polynomial is said invariant if it is invariant under the action of the orthogonal group by multilinear transform (Definition \ref{mt}), $i.e.$ 
    \[ \forall U \in \mathbf{O}(N), P(\mathcal{T}) = P(U \bullet \mathcal{T} ) . \]
\end{definition}

\subsection{Combinatorial maps and trace invariants}

We first introduce a tool convenient to encode a complete family of invariants.

\begin{definition}[Combinatorial map]
    A combinatorial map $b$ is a triple $(Q,\sigma,\tau)$ where 
    \begin{itemize}
        \item[$\bullet$] $Q= \{ q_1,\ldots ,q_{r} \}$ is a finite set of halfedges.
        \item[$\bullet$] $\sigma$ is a permutation on $Q$. The cycles of $\sigma$ are called the vertices $V(b)$ of $b$.
        \item[$\bullet$] $\tau$ is an involution on $Q$ with no fixed point. The cycles $ \{ (q, \tau(q)) \}$ of $\tau$ are called the edges $E(b)$ of $b$.
    \end{itemize} 
A combinatorial map is said rooted if one of its halfedges is marked. The set $Q$ is often omitted and considered to be canonically $\{1,\ldots,2r \}$. 
\end{definition}

It is a graph with a cyclic order of the edges around each vertex. 
We denote $G(b)=(V(b), E(b))$ the graph when forgetting the order around the vertices. 
Note that $G(b)$ may have self-loops and several edges between two vertices, it is formally a multigraph. 
Two combinatorial maps are equivalent if they differ only by a relabelling of the halfedges, that is $b \sim b'$ if there exists $\theta \in \mathfrak{S}(Q)$ such that $\sigma'= \theta \circ \sigma \circ \theta^{-1}$ and $\tau'= \theta \circ \tau \circ \theta^{-1}$. 
If $b, b'$ are rooted in $r$ and $r'$ we require moreover that $\theta(r)=r'$. 
We say that $b$ is unlabelled if we consider the conjugacy class of $b$ under $\sim$.

If $\tau$ has cycles of arbitrary length, we say that $b=(\sigma, \tau)$ is a combinatorial hypermap, where hyperedges are the cycles of $\tau$. Hence a combinatorial map is simply a $2$-uniform combinatorial hypermap, meaning that $\tau$ has only cycles of length $2$.

A combinatorial map $b=(\sigma,\tau)$ is said $p$-regular (or $p$-valent) if $\sigma$ has only cycles of length $p$, that is each vertex belongs to $p$ edges (including self-loops, counted two times). 
If $b$ is $p$-regular and has $n$ vertices, then it has $np/2$ edges. 
We denote $\mathbf{B}_n^{(p)}$, or simply $\mathbf{B}_n$ when there is no ambiguity, the set of $p$-regular combinatorial maps with $n$ unlabelled vertices. 
Remark that if $p$ and $n$ are odd, then $\mathbf{B}_n^{(p)} =\emptyset$ as the number of halfedges $np$ is odd so there is no possible matching. Then, we define $\mathbf{B}^{(p)}:= \sqcup_{n\geq0} \mathbf{B}_n^{(p)}$. 
Similarly, we denote $\mathcal{B}_n^{(p)}$ the set $p$-regular combinatorial map with $n$ unlabelled vertices that are rooted and connected, in the sense that $G(b)=(V(b),E(b))$ is a connected graph. Again, we also denote
$$\mathcal{B}^{(p)}:= \sqcup_{n\geq0} \mathcal{B}_n^{(p)}.$$

\begin{example}
For $p=3$, there are five rooted unlabelled combinatorial maps with $2$ vertices, given in Figure \ref{fig:cm}. 
\begin{figure}[H]
    \centering
    \includegraphics[scale=0.4]{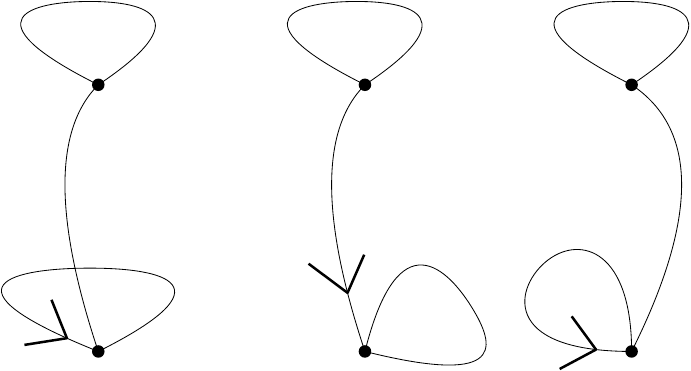} \ \ \ \ \ \
    \includegraphics[scale=0.4]{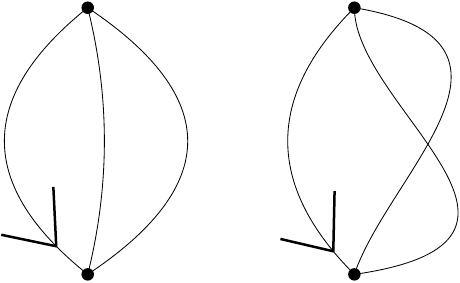}
    \caption{Combinatorial maps for $p=3$ and $n=2$. The dart marks a halfedge (the root). For instance, with the root given by $1$ and $\sigma=(1,2,3)(4,5,6)$ fixed, the first map is given by $\tau_1=(1,3)(2,4)(5,6)$, the second by $\tau_2=(1,4)(2,3)(5,6)$ and the third by $\tau_3=(1,2)(3,4)(5,6)$. The graph $G(b)$ associated to the two maps on the right-hand side, $2$ vertices linked by $p$ edges, is called a melon. The family of all melonic graphs is obtained recursively by inserting a melon inside the edge of a melonic.}\label{fig:cm}
\end{figure}
\end{example}

If $\T \in \mathcal{S}_p(N)$, we can associate to a $p$-regular combinatorial map $b$ the polynomial in the entries of $\T$ given in Definition \ref{def:TI}, 
$$\mathrm{Tr}_b(\T):= \sum_{ 1\leq a_1,\ldots,a_{\vert E(b) \vert} \leq N} \prod_{v \in V(b)} \mathcal{T}_{a_{\delta(v)_1} \ldots a_{\delta(v)_p}} ,$$
where $(\delta(v)_1,\ldots,\delta(v)_p)$ is the sequence of neighboring edges in $b$ of a vertex $v$. 
It is called the trace invariant associated to $b$. 
For instance, the trace invariant associated to a melon map is equal to the square of the Frobenius norm $\| \mathcal{T} \|_F^2 =\sum_{i_1,\ldots,i_p} \T_{i_1,\ldots,i_p}^2 $. 

\begin{proposition}\label{prop:TI}
    The family of trace invariants form a complete family of invariant polynomials. In other words, if $P(\T)$ is an invariant polynomial (in the sense of Definition \ref{def:inv}), then there exists a family of real numbers $\{ P_b, b \in \mathbf{B}^{(p)} \}$ such that 
    $$ P(\T) = \sum_{b \in \mathbf{B}^{(p)}} P_b \mathrm{Tr_b}(\T) .$$
\end{proposition}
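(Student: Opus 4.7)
The plan is to derive the statement from Weyl's first fundamental theorem of invariant theory for $\mathbf{O}(N)$. First, since the $\mathbf{O}(N)$-action preserves the degree of a polynomial in the entries of $\T$, each homogeneous component of $P$ is itself invariant, so I may assume $P$ is homogeneous of some degree $d$. By polarization, such a $P$ corresponds to a symmetric $d$-linear form on $\mathcal{S}_p(N)$ evaluated on the diagonal $\T^{\otimes d}$; since $\mathcal{S}_p(N) = \mathrm{Sym}^p(\R^N) \hookrightarrow (\R^N)^{\otimes p}$, this form lifts to an $\mathbf{O}(N)$-invariant linear form on $(\R^N)^{\otimes pd}$ that is symmetric under the natural action of $\mathfrak{S}_p^d \rtimes \mathfrak{S}_d$ (permuting the $p$ indices within each factor and permuting the $d$ factors).

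Next I would apply Weyl's first fundamental theorem: the space of $\mathbf{O}(N)$-invariant linear forms on $(\R^N)^{\otimes pd}$ is spanned by the complete-contraction functionals
\[ T \mapsto \sum_{i_1,\ldots,i_{pd}} T_{i_1 \ldots i_{pd}} \prod_{\{a,b\} \in \tau} \delta_{i_a i_b}, \]
parameterized by fixed-point-free involutions $\tau$ of $\{1,\ldots,pd\}$; in particular the index set is non-empty only when $pd$ is even. Fixing $\sigma$ to be the permutation of $\{1,\ldots,pd\}$ whose cycles are the consecutive blocks $(p(v-1)+1,\ldots,pv)$ for $v=1,\ldots,d$, the pair $b = (\sigma,\tau)$ is a $p$-regular combinatorial map with $d$ vertices, and evaluating the above functional on $\T^{\otimes d}$ yields exactly $\mathrm{Tr}_b(\T)$ of Definition \ref{def:TI}.

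The final step is to quotient out the redundancy. Two involutions $\tau,\tau'$ give the same value on $\T^{\otimes d}$ as soon as $(\sigma,\tau) \sim (\sigma,\tau')$ as combinatorial maps, because the relabelling is realized by an element of $\mathfrak{S}_p^d \rtimes \mathfrak{S}_d$ which acts trivially on $\T^{\otimes d}$ by symmetry of $\T$. Hence Weyl's spanning family descends to a family indexed by the unlabelled set $\mathbf{B}_d^{(p)}$, and collecting coefficients yields the required expansion $P(\T) = \sum_{b \in \mathbf{B}^{(p)}} P_b \mathrm{Tr}_b(\T)$. The main obstacle I expect is precisely this bookkeeping step: one has to verify that the redundancy $\tau \mapsto \theta\tau\theta^{-1}$ in Weyl's parametrization, combined with the symmetry of $\T$, matches exactly the equivalence $\sim$ defining $\mathbf{B}^{(p)}$, so that no additional linear relations survive between the $\mathrm{Tr}_b(\T)$ beyond passage to unlabelled maps. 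Once this identification is in place the proposition is a direct consequence of Weyl's classical theorem.
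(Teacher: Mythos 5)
Your argument is correct and is precisely the classical route — polarization plus Weyl's first fundamental theorem for $\mathbf{O}(N)$ — which the paper does not spell out but delegates to the cited references. One small simplification: your final worry about whether extra linear relations survive among the $\mathrm{Tr}_b(\T)$ is moot, since the proposition only asserts that the trace invariants span (the coefficients $P_b$ need not be unique), so once you observe that each contraction functional evaluated on $\T^{\otimes d}$ equals $\mathrm{Tr}_{(\sigma,\tau)}(\T)$ and group terms by the equivalence class of $(\sigma,\tau)$ in $\mathbf{B}^{(p)}$, the proof is complete.
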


This is a classical result. A simple proof can be found in [\cite{kun24}, Theorem 3.2.] or [\cite{bonnin2025characterizationgaussiantensorensembles}, Section 3.1.]. Only the complex case was treated in the book of Gurau \cite{gur}.

\begin{definition}[Balanced single trace invariant]\label{def:bsti}
    For $\T \in \mathcal{S}_p(N)$ and $n\geq 1$, we define the balanced single trace invariant of degree $n$ as the sum over $\mathcal{B}_n^{(p)}$, the rooted connected combinatorial maps with $n$ unlabelled vertices, 
    $$ I_n(\mathcal{T}) := \sum_{b \in \mathcal{B}_n} \mathrm{Tr}_b(\mathcal{T}). $$
By convention we set $I_0(\mathcal{T})=N$.
\end{definition}

One important point in order to prove later the convergence of the balanced single trace invariant of a Wigner tensor is to distinguish how many indices are distinct when considering a trace invariant where the sum is over indices $a_1,\ldots, a_{E(b)}$. 
For $b \in \mathcal{B}_n$, denote $\mathcal{P}(E(b))$ the set of partitions of the edges and for $\pi \in \mathcal{P}(E(b))$, denote $b_{\pi}$ the map $b$ where the edges in a same block of $\pi$ are merged. 
If $E(b)=\{1,\ldots,np/2\}$, we denote $\mathcal{P}(np/2)=\mathcal{P}(E(b))$ and we denote $\vert \pi \vert$ the number of blocks of $\pi$. 
Then,
$$ I_n(\mathcal{T}) = \sum_{b \in \mathcal{B}_n} \sum_{\pi \in \mathcal{P}(np/2)} \mathrm{Tr^0}_{b_{\pi}}(\mathcal{T}), $$
where $$ \mathrm{Tr^0}_{b_{\pi}}(\mathcal{T}):= \sum_{ \genfrac{}{}{0pt}{}{1\leq a_1,\ldots,a_{\vert \pi \vert} \leq N}{\text{distinct}} } \prod_{v \in V(b)} \mathcal{T}_{a_{\delta(v)_1} \ldots a_{\delta(v)_p}}. $$
Note that if $b$ was a combinatorial map, $b_\pi$ is now a combinatorial hypermap where we merge the cycles of $\tau$ belonging to a same block of $\pi$. 
In this sense, if $b$ was a $p$-regular combinatorial map, that is a $p$-regular and $2$-uniform combinatorial hypermap, then $b_\pi$ is only a $p$-regular combinatorial hypermap.

\begin{example}
According to Figure \ref{fig:cm}, we have for $p=3$,
$$ I_2(\T) = 3 \sum_{a,b,c} \T_{aab} \T_{bcc} + 2 \sum_{a,b,c} \T_{abc}^2. $$
We give the different partitions of the edges of the first combinatorial map of the Figure \ref{fig:cm}.
\begin{figure}[H]\label{fig:pi}
    \centering
    \includegraphics[scale=0.5]{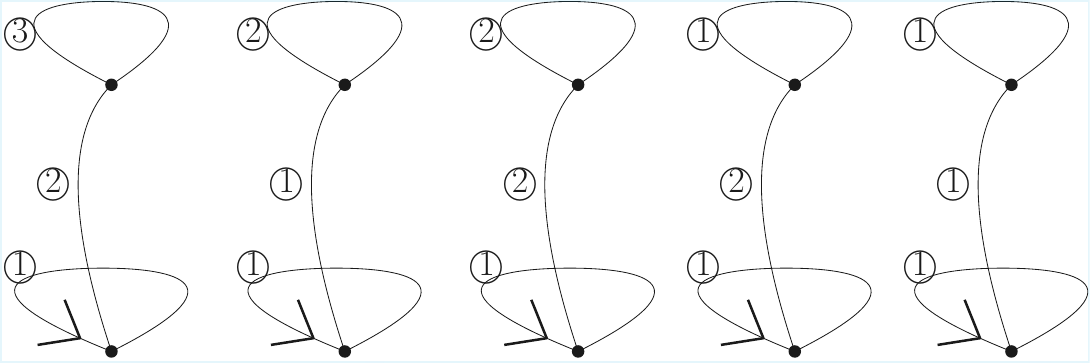} 
\caption{Possible partitions of the edges. We have $ \mathrm{Tr}_b(\T) = \sum_{ \genfrac{}{}{0pt}{}{a,b,c}{\text{distinct}} } \T_{aab} \T_{bcc} + 2 \sum_{a \neq b} \T_{aaa} \T_{abb} + \sum_{a \neq b} \T_{aab}^2 + \sum_{a} \T_{aaa}^2$.}
\end{figure}
\end{example}

\subsection{A resolvent for tensors}\label{sec:res}

A motivation to introduce the balanced single trace invariant is that they appear in the expansion of the resolvent trace, an object introduced by Gurau in \cite{gur1}. 
We are going to recall briefly the main known characteristics of this trace of resolvent, the interested reader may refer to the work of Gurau for more details.

\begin{definition}[Resolvent trace]
    For $\T \in \mathcal{S}_p(N)$, we define the balanced resolvent trace of $\mathcal{T}$, for $z \in i\mathbb{R}$,
    \begin{equation} \notag
    \mathcal{R}_{\T}(z) := \frac{z^{-1}}{\Xi(z)}\int \frac{\|\phi\|^2}{N} \exp \left(-\left( \frac{1}{2}\|\phi \|^2 - \frac{1}{z}\frac{\mathcal{T} \cdot \phi^p}{p}\right)\right) [d\phi ]
\end{equation}
where 
\begin{itemize}
    \item[$\bullet$] $[d\phi ] := (2 \pi)^{-N/2} \prod_{i=1}^N d\phi_i$
    \item[$\bullet$] $\mathcal{T} \cdot \phi^p := \sum_{1 \leq i_1,\ldots ,i_p \leq N} \mathcal{T}_{i_1,\ldots ,i_p} \phi_{i_1}\ldots \phi_{i_p}$
    \item[$\bullet$] $\Xi_{\T}(z) := \int \exp \left(-\left( \frac{1}{2}\|\phi \|^2 - \frac{1}{z}\frac{\mathcal{T} \cdot \phi^p}{p}\right)\right) [d\phi ]$
\end{itemize} 
This resolvent trace is well defined on two cones containing $i\mathbb{R}^+$ and $i\mathbb{R}^-$ respectively, and admits an analytic continuation on $\mathbb{C} \setminus \mathbb{R}$. 
It is given for $z=|z|e^{i\alpha}$ in $\mathbb{H}^+ := \{x+iy : x \in \mathbb{R}, y>0 \}$, by
\[ \mathcal{R}_{\T}^+(z) := \frac{z^{-1}}{\Xi_{\T}(z)}\int e^{\frac{i}{p}N(\alpha-\frac{\pi}{2})} \frac{\|\phi\|^2 }{N} \exp \left(-\left( \frac{1}{2}\|\phi \|^2 e^{\frac{2i}{p}(\alpha-\frac{\pi}{2})} - \frac{1}{z}\frac{\mathcal{T} \cdot \left(\phi e^{\frac{i}{p}(\alpha-\frac{\pi}{2})}\right)^p}{p}\right)\right) [d\phi ], \]
and similarly on $\mathbb{H}^-$. 
We refer to the appendix A of the Gurau's article \cite{gur1} for the proof of this fact.
\end{definition}

In the matrix case we find the usual resolvent trace with this formula. We give a proof of this fact in the Appendix \ref{appendixA} at the end of this work. For the following of this Section, we fix $\T \in \mathcal{S}_p(N)$ and we denote $\mathcal{R}(z)$ for $\mathcal{R}_{\T}(z)$.

\begin{proposition} \label{res}
    We have the relation on $\mathbb{C} \setminus \mathbb{R}$,
    \[ \mathcal{R}(z) = z^{-1} - \frac{p}{N} \frac{d}{dz}\left( \mathrm{ln } \Xi(z) \right). \]
\end{proposition}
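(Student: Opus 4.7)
The plan is to differentiate the definition of $\Xi(z)$ directly and then convert the resulting integral into the one defining $\mathcal{R}(z)$ via integration by parts on $\mathbb{R}^N$. Throughout I would work on the cone containing $i\mathbb{R}^+$ where all integrals are absolutely convergent and dominated, so that differentiation under the integral sign and vanishing of boundary terms are valid; the identity then extends by analytic continuation to all of $\mathbb{C}\setminus\mathbb{R}$ as explained in Appendix A of \cite{gur1}.

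Set $S_z(\phi):=\tfrac12\|\phi\|^2-\tfrac{1}{pz}\mathcal{T}\cdot\phi^p$, so that $\Xi(z)=\int e^{-S_z(\phi)}[d\phi]$. The first step is a direct computation
\[ \frac{d}{dz}\ln\Xi(z)=\frac{1}{\Xi(z)}\int -\partial_z S_z(\phi)\,e^{-S_z(\phi)}[d\phi]=-\frac{1}{pz^2\,\Xi(z)}\int \mathcal{T}\cdot\phi^p\,e^{-S_z(\phi)}[d\phi],\]
using $\partial_z\big(\tfrac{1}{pz}\mathcal{T}\cdot\phi^p\big)=-\tfrac{1}{pz^2}\mathcal{T}\cdot\phi^p$.

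The second step is to relate $\int \mathcal{T}\cdot\phi^p\,e^{-S_z}[d\phi]$ to $\int\|\phi\|^2 e^{-S_z}[d\phi]$ by integration by parts. Since $\mathcal{T}$ is symmetric, $\partial_{\phi_i}(\mathcal{T}\cdot\phi^p)=p\,(\mathcal{T}\cdot\phi^{p-1})_i$, and consequently $\partial_{\phi_i}S_z(\phi)=\phi_i-\tfrac{1}{z}(\mathcal{T}\cdot\phi^{p-1})_i$. Applying $\sum_i\int \partial_{\phi_i}(\phi_i e^{-S_z})[d\phi]=0$ yields
\[ N\,\Xi(z)=\sum_i\int\phi_i\,\partial_{\phi_i}S_z(\phi)\,e^{-S_z}[d\phi]=\int\|\phi\|^2 e^{-S_z}[d\phi]-\frac{1}{z}\int\mathcal{T}\cdot\phi^p\,e^{-S_z}[d\phi],\]
since $\sum_i\phi_i(\mathcal{T}\cdot\phi^{p-1})_i=\mathcal{T}\cdot\phi^p$. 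Solving for $\int\mathcal{T}\cdot\phi^p\,e^{-S_z}[d\phi]$ gives
\[ \int\mathcal{T}\cdot\phi^p\,e^{-S_z}[d\phi]=z\int\|\phi\|^2 e^{-S_z}[d\phi]-zN\,\Xi(z).\]

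The third step is just to substitute this into the expression from the first step:
\[ \frac{d}{dz}\ln\Xi(z)=-\frac{1}{pz\,\Xi(z)}\int\|\phi\|^2 e^{-S_z}[d\phi]+\frac{N}{pz},\]
and then to recognize that, by definition of $\mathcal{R}(z)$, the first term equals $-\tfrac{N}{p}\mathcal{R}(z)$. Rearranging yields the identity $\mathcal{R}(z)=z^{-1}-\tfrac{p}{N}\tfrac{d}{dz}\ln\Xi(z)$.

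The only genuine subtlety is the justification of integration by parts and of differentiation under the integral, because for real $z$ the integrand is not integrable: the cubic-or-higher term $\mathcal{T}\cdot\phi^p/(pz)$ makes $e^{-S_z}$ grow at infinity when $z\in\mathbb{R}$. Hence the calculation must be performed on the cones introduced by Gurau where one rotates $\phi\mapsto\phi e^{i(\alpha-\pi/2)/p}$ to make the quadratic part dominate at infinity, and the boundary terms in the integration by parts still vanish. Once the identity is established there, analyticity of both sides on $\mathbb{C}\setminus\mathbb{R}$ extends it to the full domain.
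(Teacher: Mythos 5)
Your proof is correct and follows essentially the same route as the paper's: differentiation under the integral sign to express $\frac{d}{dz}\ln\Xi(z)$ in terms of $\int \mathcal{T}\cdot\phi^p\, e^{-S_z}[d\phi]$, combined with the integration-by-parts identity $N\Xi(z)=\int\bigl(\|\phi\|^2-\tfrac{1}{z}\mathcal{T}\cdot\phi^p\bigr)e^{-S_z}[d\phi]$. Your added remarks on the contour rotation needed to justify convergence and the vanishing of boundary terms go slightly beyond what the paper spells out, but the core argument is the same.
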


\begin{proof}
    Integrating by parts, we have for $1 \leq i \leq N$:
\[ \Xi(z) = - \int \phi_i \left( -\phi_i + \frac{1}{z} \sum_{i_2,\ldots ,i_p} \mathcal{T}_{i, i_2\ldots i_p}\phi_{i_2}\ldots \phi_{i_p} \right) \exp \left(-\left( \frac{1}{2}\|\phi \|^2 - \frac{1}{z}\frac{\mathcal{T} \cdot \phi^p}{p}\right)\right) [d\phi ] . \]
Hence summing on $i$, 
    \[ N \Xi(z) = \int \left( \|\phi\|^2 - \frac{1}{z} \mathcal{T} \cdot \phi^p \right) \exp \left(-\left( \frac{1}{2}\|\phi \|^2 - \frac{1}{z}\frac{\mathcal{T} \cdot \phi^p}{p}\right)\right) [d\phi ] , \]
and then,
\begin{align*}
    \mathcal{R}(z) &= \frac{z^{-1}}{\Xi(z)}\int \frac{\|\phi\|^2}{N} \exp \left(-\left( \frac{1}{2}\|\phi \|^2 - \frac{1}{z}\frac{\mathcal{T} \cdot \phi^p}{p}\right)\right) [d\phi ] \\
    &= z^{-1} - \frac{p}{N} \frac{1}{\Xi(z)} \int \frac{-1}{z^2} \frac{\mathcal{T} \cdot \phi^p}{p}  \exp \left(-\left( \frac{1}{2}\|\phi \|^2 - \frac{1}{z}\frac{\mathcal{T} \cdot \phi^p}{p}\right)\right) [d\phi ] \\
    &= z^{-1} - \frac{p}{N} \frac{1}{\Xi(z)} \frac{d}{dz}\left( \Xi(z) \right) .
\end{align*}
This gives the result.
\end{proof}

This relation gives the possibility to compute the expansion of the resolvent trace on $\mathbb{C} \setminus \mathbb{R}$.

\begin{proposition}[Expansion of the resolvent trace]\label{prop:res}
For $z \in \mathbb{C} \setminus \mathbb{R}$, we have 
    \[ \frac{d^{n+1} \mathcal{R}(z)}{dz^{n+1}} \underset{|z| \rightarrow \infty}{\sim} \frac{I_n(\mathcal{T})}{N}. \]
    Equivalently, the resolvent trace has a formal expansion around $z=\infty$,
    \[ \mathcal{R}(z) = \sum_{n\geq 0} \frac{1}{z^{n+1}} \frac{I_n(\mathcal{T})}{N} .\]
\end{proposition}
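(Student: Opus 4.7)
The strategy is to combine the identity $\mathcal{R}(z)=z^{-1}-\tfrac{p}{N}\tfrac{d}{dz}\ln\Xi(z)$ of Proposition \ref{res} with a formal Wick expansion of $\Xi(z)$ in powers of $1/z$, and then identify the resulting diagrammatic expansion with the rooted connected combinatorial maps that define $I_n(\mathcal{T})$.

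First, expanding $\exp\!\bigl(\tfrac{1}{pz}\,\mathcal{T}\cdot\phi^p\bigr)$ as a formal power series in $1/z$ and swapping with the Gaussian integral yields
\[
\Xi(z)=\sum_{k\geq 0}\frac{1}{k!(pz)^k}\,\mathbb{E}_G\!\bigl[(\mathcal{T}\cdot\phi^p)^k\bigr],
\]
where $\mathbb{E}_G$ denotes the standard Gaussian integral on $\mathbb{R}^N$. Writing $(\mathcal{T}\cdot\phi^p)^k=\sum_{\vec i}\prod_{v=1}^{k}\mathcal{T}_{i^v_1\cdots i^v_p}\phi_{i^v_1}\cdots\phi_{i^v_p}$ and applying Wick's theorem, each Gaussian moment becomes a sum over fixed-point-free involutions $\tau$ on the $kp$ halfedges $\{(v,j)\}$; together with the canonical permutation $\sigma$ whose cycles are $\{(v,1),\ldots,(v,p)\}$ for $v=1,\ldots,k$, the pair $(\sigma,\tau)$ is precisely a $p$-regular combinatorial map on $k$ labelled vertices, and the collapsed index sum is $\mathrm{Tr}_b(\mathcal{T})$.

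Since $\mathrm{Tr}_b(\mathcal{T})$ is multiplicative on the connected components of $b$, the exponential formula for generating functions of combinatorial species yields
\[
\ln\Xi(z)=\sum_{k\geq 1}\frac{1}{z^k}\sum_{b}\frac{\mathrm{Tr}_b(\mathcal{T})}{|\mathrm{Aut}(b)|},
\]
the inner sum running over unlabelled \emph{connected} $p$-regular combinatorial maps with $k$ vertices; the weight $1/|\mathrm{Aut}(b)|$ appears because the number of labelled representatives compatible with the canonical $\sigma$ equals $k!\,p^k/|\mathrm{Aut}(b)|$, which cancels the prefactor $1/(k!\,p^k)$. Each such $b$ admits $pk/|\mathrm{Aut}(b)|$ distinct rootings, so
\[
I_k(\mathcal{T})=\sum_{b\in\mathcal{B}_k}\mathrm{Tr}_b(\mathcal{T})=pk\sum_{b}\frac{\mathrm{Tr}_b(\mathcal{T})}{|\mathrm{Aut}(b)|},
\]
and the expansion collapses to the clean identity $\ln\Xi(z)=\sum_{k\geq 1}\tfrac{I_k(\mathcal{T})}{p\,k\,z^k}$.

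Differentiating in $z$ absorbs the $1/k$ factor, and Proposition \ref{res} then gives
\[
\mathcal{R}(z)=z^{-1}+\frac{1}{N}\sum_{k\geq 1}\frac{I_k(\mathcal{T})}{z^{k+1}}=\sum_{n\geq 0}\frac{I_n(\mathcal{T})}{N\,z^{n+1}}
\]
using $I_0(\mathcal{T})=N$, from which the asymptotic statement on $\mathcal{R}^{(n+1)}(z)$ follows by term-by-term differentiation of the formal series. The main obstacle is the precise bookkeeping of the combinatorial weights $k!$, $p^k$, $|\mathrm{Aut}(b)|$, and the $pk$ rootings, which must balance exactly; a cleaner alternative that bypasses automorphism groups entirely is to work directly with $Nz\,\mathcal{R}(z)=\langle\|\phi\|^2\rangle$ under the perturbed measure and notice that in the Wick expansion the observable $\|\phi\|^2=\sum_i\phi_i^2$ behaves as a \emph{marked edge} of the diagram, so that connected diagrams are in bijection with rooted connected $p$-regular maps, immediately yielding the sum $I_n(\mathcal{T})$.
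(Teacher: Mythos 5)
Your proof is correct and follows essentially the same route as the paper's (sketched) argument: the Wick expansion of $\Xi(z)$ into a sum of $\mathrm{Tr}_b(\mathcal{T})$ over possibly disconnected $p$-regular maps, the exponential formula to restrict $\ln\Xi$ to connected maps, and the observation that differentiating in $z$ together with the prefactor $p$ converts the sum over unrooted connected maps into the sum over rooted ones defining $I_n(\mathcal{T})$. Your explicit bookkeeping of the $1/|\mathrm{Aut}(b)|$ weights and of the $pk/|\mathrm{Aut}(b)|$ rootings is simply a more careful rendering of the paper's ``derivation marks a vertex and the factor $p$ marks a halfedge'' step, so no further comparison is needed.
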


\begin{proof}[Sketch of proof]
    We know by Proposition \ref{res} that
    \[ \mathcal{R}(z) = z^{-1} - \frac{p}{N} \frac{d}{dz}\left( \mathrm{ln} \Xi{Z}(z) \right). \]
    We have the expansion around $z=\infty$,
    \[ \Xi(z) := \sum_{n\geq 0} \frac{1}{n!}\frac{1}{z^n} \int \exp \left(- \frac{1}{2}\|\phi \|^2 \right) \left(\frac{\mathcal{T} \cdot \phi^p}{p}\right)^n [d\phi ] . \]
    By Wick Theorem, performing the Gaussian integral consists of choosing which legs of each one of the $n$ copies of $\T$ are matched. 
    It is then choosing the matching $\tau$ on the halfedges of the combinatorial map where each cycle of $\sigma$ is a copy of $\T$. 
    This standard fact is often used in physics literature (see Section 3.2. in \cite{gur2}) and it simplify the expansion into 
    \[ \Xi(z) := \sum_{n\geq 0} \frac{1}{z^n} \sum_{b \in \mathbf{B}_n} \mathrm{Tr}_b(\mathcal{T}) \]
    where we recall that $\mathbf{B}_n$ is the set of (possibly disconnected) combinatorial maps with $n$ unlabelled vertices. 
    It remains to use some basic analytic combinatorics. 
    In particular, with the relation between a generating series counting connected objects and its exponential one counting the disconnected, we get 
    \[ \mathrm{ln} \Xi(z) := \sum_{n\geq 0} \frac{1}{z^{n}} \sum_{ \genfrac{}{}{0pt}{}{b \in \mathbf{B}_n}{\text{connected}} } \mathrm{Tr}_b(\mathcal{T}). \]
    Then, derivation with respect to $z$ marks a vertex (and change the sign as $d(1/z^n)/dz=-n/z^{n+1}$) and the factor $p$ marks a halfedge. Hence,
    \[ \mathcal{R}(z) = z^{-1} + \frac{1}{N} \sum_{n\geq 1} \frac{1}{z^{n+1}} \sum_{b \in \mathcal{B}_n } \mathrm{Tr}_b(\mathcal{T}), \]
    with $\mathcal{B}_n$ the set of rooted connected combinatorial maps with $n$ unlabelled vertices. This finally gives the result,
    \[ \mathcal{R}(z) = \sum_{n\geq 0} \frac{1}{z^{n+1}} \frac{I_n(\mathcal{T})}{N} .\]
\end{proof}

\begin{remark}
    In the matrix case where $p=2$, we have for all $n$, $|\mathcal{B}_n|=1$ because the unique connected $2$-regular graph with $n$ vertices is a cycle. 
    The trace invariant associated to the cycle is equal to $\mathrm{Tr}\left(\mathcal{M}^n\right)$, so $I_n(\mathcal{T}) = \mathrm{Tr}\left(\mathcal{M}^n\right) $. 
    Hence this formal expansion gives in the matrix case 
    \[ \mathcal{R}(z) = \sum_{n\geq 0} \frac{1}{z^{n+1}} \frac{\mathrm{Tr}\left(\mathcal{M}^n\right)}{N}\]
    and we retrieve the classical matrix resolvent trace expansion for $|z| > \|\mathcal{M}\|$:
    \[ \frac{1}{N} \mathrm{Tr}\left( \left(z\mathcal{I}-\mathcal{M}\right)^{-1}\right) = \frac{1}{N} \frac{1}{z}\mathrm{Tr}\left(  (\mathcal{I}-\frac{\mathcal{M}}{z})^{-1}\right) = \sum_{n\geq 0} \frac{1}{z^{n+1}} \frac{\mathrm{Tr}\left(\mathcal{M}^n\right)}{N} \]
\end{remark}

\subsection{Gaussian Orthogonal Tensor Ensemble}

Gurau studied the resolvent trace in the case of a Gaussian tensor. More precisely let us introduce the Gaussian Orthogonal Tensor Ensemble (\textit{GOTE}) that generalizes the matrix Gaussian Orthogonal Ensemble.

\begin{definition}[\textit{GOTE}] \label{defW} 
We said that a symmetric tensor $\mathcal{W} \in \mathcal{S}_p(N)$ belongs to the \textit{GOTE} if, as a tensor-valued random variable, it has a density with respect to the natural Lebesgue measure on $\mathcal{S}_p(N)$ proportional to
\[ f(\mathcal{W}) \propto  
\exp{\left(-\frac{N^{p-1}}{2 p} \| \mathcal{W} \| _F^2 \right)} \]
\end{definition}

As in the matrix case the law of such a tensor is invariant with respect to an orthogonal change of basis because the density $f$ only depends on the Euclidian norm, and
\[\forall U \in \mathbf{O}(N), \ \ \| U \bullet \mathcal{W} \|_F = \|\mathcal{W}\|_F. \]
Moreover, taking into account the symmetry, Definition \ref{defW} implies that the entries of a tensor of the \textit{GOTE} are Gaussian, centered, with variance $\sigma_{i_1,\ldots,i_p}$ depending only of the type of the tuple $(i_1,\ldots,i_p)$, and given by the following Lemma.

\begin{lemma}[Invariant second moment] \label{Pi}
    If $\mathcal{W}$ belongs to the Gaussian Orthogonal Tensor Ensemble, then $\mathcal{W}_{i_1,\ldots,i_p} \sim \mathcal{N}\left(0,\left(\frac{\sigma_{i_1,\ldots,i_p}}{N^{\frac{p-1}{2}}}\right)^2\right)$ where 
    \[ \sigma_{i_1,\ldots,i_p}^2 = \frac{p}{ |\mathcal{P}_{i_1,\ldots,i_p} | } = \frac{1}{(p-1)!}\prod_{a=1}^N \vert \{1\leq k\leq p : i_k=a \} \vert !, \]
    with $\mathcal{P}_{i_1,\ldots,i_p}$ the set of distinct permutations of $( i_1,\ldots ,i_p )$, i.e. the $\theta \in \mathfrak{S}(p)$ such that $( i_1,\ldots ,i_p )=( i_{\theta(1)},\ldots ,i_{\theta(p)} )$. 
\end{lemma}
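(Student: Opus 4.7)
The plan is to make the density explicit in terms of a set of truly independent coordinates parametrizing $\mathcal{S}_p(N)$, and then read off the law of each entry directly from the exponent. A symmetric tensor is determined by its values on a set of representatives of the orbits of $\mathfrak{S}(p)$ acting on $\{1,\ldots,N\}^p$ by permutation of the indices. Choosing a transversal $\mathcal{I}$ of these orbits, the natural Lebesgue measure on $\mathcal{S}_p(N)$ is
\[ d\mathcal{W} = \prod_{(i_1,\ldots,i_p)\in \mathcal{I}} d\mathcal{W}_{i_1,\ldots,i_p}, \]
so it suffices to write the density $f(\mathcal{W})$ as a product over $\mathcal{I}$ of one-dimensional Gaussian densities.

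Next I would decompose the squared Frobenius norm along orbits. By the orbit--stabilizer theorem, the orbit of $(i_1,\ldots,i_p)$ under $\mathfrak{S}(p)$ has cardinality equal to the multinomial coefficient $\frac{p!}{\prod_{a=1}^N c_a(i_1,\ldots,i_p)!}$, while $\mathcal{W}_{i_1,\ldots,i_p}^2$ is constant on each orbit by symmetry of $\mathcal{W}$. Hence
\[ \|\mathcal{W}\|_F^2 = \sum_{(i_1,\ldots,i_p)\in \mathcal{I}} \frac{p!}{\prod_{a=1}^N c_a(i_1,\ldots,i_p)!}\, \mathcal{W}_{i_1,\ldots,i_p}^{\,2}. \]
Plugging this into $f(\mathcal{W}) \propto \exp\!\left(-\frac{N^{p-1}}{2p}\|\mathcal{W}\|_F^2\right)$ and using $p!/p = (p-1)!$, the density factorizes as
\[ f(\mathcal{W}) \;\propto\; \prod_{(i_1,\ldots,i_p) \in \mathcal{I}} \exp\!\left( -\frac{N^{p-1}(p-1)!}{2\prod_{a=1}^N c_a(i_1,\ldots,i_p)!}\, \mathcal{W}_{i_1,\ldots,i_p}^{\,2} \right). \]
This exhibits the entries $\{\mathcal{W}_{i_1,\ldots,i_p} : (i_1,\ldots,i_p)\in \mathcal{I}\}$ as independent centered Gaussians with variance $\sigma_{i_1,\ldots,i_p}^2/N^{p-1}$, where $\sigma_{i_1,\ldots,i_p}^2 = \frac{1}{(p-1)!}\prod_a c_a(i_1,\ldots,i_p)!$, as claimed. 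The alternative expression $\sigma_{i_1,\ldots,i_p}^2 = p/|\mathcal{P}_{i_1,\ldots,i_p}|$ then follows from the orbit--stabilizer identity $|\mathcal{P}_{i_1,\ldots,i_p}| = p!/\prod_a c_a!$.

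There is essentially no real obstacle here: the argument is a one-line computation once the reference measure on $\mathcal{S}_p(N)$ has been correctly identified. The only point requiring care is keeping track of the multiplicity $\frac{p!}{\prod_a c_a!}$ with which each symmetric entry contributes to the Frobenius sum, since this is exactly what conspires with the prefactor $N^{p-1}/(2p)$ to produce a variance depending only on the type of the tuple --- a feature which, as noted after Definition~\ref{defW}, is indispensable for orthogonal invariance of the law.
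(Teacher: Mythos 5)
Your proposal is correct and follows essentially the same route as the paper: decompose $\|\mathcal{W}\|_F^2$ over a transversal of the $\mathfrak{S}(p)$-orbits, weight each representative by the orbit cardinality $p!/\prod_a c_a!$ (orbit--stabilizer), and read off the Gaussian variance from the factorized density. The only cosmetic difference is that you write the orbit size as a multinomial coefficient from the start, whereas the paper works with $|\mathcal{P}_{i_1,\ldots,i_p}|$ and records the multinomial identity at the end.
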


\begin{proof}
    We write $D:=\{(i_1\leq\ldots \leq i_p) \in \{ 1,\ldots, N\} ^p \text{ distinct up to any permutation} \}$.
    The lemma only relies on the simple fact that 
    \[\| \mathcal{W} \| _F^2 = \sum_{(i_1,\ldots ,i_p) \in D} |\mathcal{P}_{i_1,\ldots,i_p} | \times \mathcal{W}_{i_1,\ldots ,i_p}^2, \]
    hence 
    $$\frac{N^{p-1}}{2 p} \| \mathcal{W} \| _F^2 = \frac{1}{2} \sum_{(i_1,\ldots ,i_p) \in D} \frac{N^{p-1}|\mathcal{P}_{i_1,\ldots,i_p} |}{p} \mathcal{W}_{i_1,\ldots ,i_p}^2 .$$
    We get the result. Note finally that by the orbit-stabilizer Theorem,
    $$ |\mathcal{P}_{i_1,\ldots,i_p} | = \frac{p!}{\prod_{a=1}^N c_a(i_1,\ldots,i_p) !} ,$$
    where $c_a(i_1,\ldots,i_p)= \vert \{1\leq k\leq p : i_k=a \} \vert$.
\end{proof}

\begin{example}
    For instance for $p=3$, we have 
    \[ \| \mathcal{W} \| _F^2 = \sum_a \mathcal{W}_{aaa}^2 + 3\sum_{a\neq b} \mathcal{W}_{aab}^2 + 6\sum_{a<b<c} \mathcal{W}_{abc}^2\]
    and hence,
    \[ \sigma_{ijk}^2 = \frac{1}{2} + \frac{1}{2} \delta_{ij} +\frac{1}{2} \delta_{ik}  + \frac{1}{2} \delta_{jk} +  \delta_{ij} \delta_{ik} \ \ \in \{3, 1, \frac{1}{2} \}\]
    where $\delta_{ij}=1$ if $i=j$ and 0 otherwise.
\end{example}

Using a saddle point method, Gurau proved that the resolvent trace of the \textit{GOTE} is asymptotically given by the generating series of the Fuss-Catalan numbers.

\begin{proposition}[\cite{gur1}]
    Let $(\mathcal{W}_N)_{n\geq 1}$ be a sequence of random tensors belonging to the \textit{GOTE}. 
    There exists $\omega_c >0$ such that for $\vert z \vert < \omega_c$, when $N \rightarrow \infty$,
    $$ \mathcal{R}_{\mathcal{W}_N} (z) \rightarrow \frac{1}{z} T_p \left(\frac{1}{z^2} \right), $$
    where $T_p(z):= \sum_{k\geq 0} F_p(k) z^k$.
\end{proposition}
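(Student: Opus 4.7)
The plan is to exploit Gaussianity and reduce the $N$-dimensional integral defining $\Xi_{\mathcal{W}_N}(z)$ to a one-dimensional radial integral, then apply the Laplace/saddle-point method. First I would integrate out the tensor: since $\mathcal{W}\cdot\phi^p$ is a centered Gaussian in $\mathcal{W}$, Fubini gives
$$\mathbb{E}[\Xi_{\mathcal{W}_N}(z)] = \int \exp\!\left(-\tfrac{1}{2}\|\phi\|^2 + \tfrac{1}{2p^2 z^2}\,\mathbb{E}\big[(\mathcal{W}\cdot\phi^p)^2\big]\right)[d\phi].$$
A direct computation using Lemma \ref{Pi} (the orbit sizes $|\mathcal{P}_i|$ cancel the variance factors $p/|\mathcal{P}_i|$) yields $\mathbb{E}[(\mathcal{W}\cdot\phi^p)^2] = (p/N^{p-1})\,\|\phi\|^{2p}$, so the integrand depends only on $\|\phi\|$.

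Next I would pass to radial coordinates $\phi = \sqrt{N}\,s\cdot u$, $u\in\mathbb{S}^{N-1}$, collapsing the integral to
$$\mathbb{E}[\Xi_{\mathcal{W}_N}(z)] = c_N \int_0^\infty \frac{ds}{s}\,\exp\!\big(N\,\Phi_z(s)\big), \qquad \Phi_z(s) = \log s - \tfrac{s^2}{2} + \tfrac{s^{2p}}{2pz^2},$$
after absorbing the Jacobian $s^{N-1}$ into the exponent. Standard Laplace analysis then gives $\tfrac{1}{N}\log\mathbb{E}[\Xi_{\mathcal{W}_N}(z)] \to \Phi_z(s_*(z))$ where $s_*(z)$ is the relevant saddle, satisfying $\Phi_z'(s_*) = 0$, i.e. (after multiplication by $z^2 s_*$)
$$s_*^{2p} = z^2(s_*^2-1), \qquad \text{equivalently} \qquad y = 1 + \tfrac{y^p}{z^2} \text{ with } y := s_*^2.$$

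Combining this with Proposition \ref{res} and concentration of $\log\Xi_{\mathcal{W}_N}$ around its mean (which follows from a Gaussian Poincaré inequality applied to $\mathcal{W}$, together with the analytic dependence in $z$), we obtain
$$\mathcal{R}_{\mathcal{W}_N}(z) = \tfrac{1}{z} - \tfrac{p}{N}\tfrac{d}{dz}\log\Xi_{\mathcal{W}_N}(z) \longrightarrow \tfrac{1}{z} - p\,\partial_z\Phi_z(s_*(z)) = \tfrac{1}{z} + \tfrac{s_*^{2p}}{z^3},$$
where the derivative of $s_*$ in $z$ drops out thanks to the saddle equation $\Phi_z'(s_*) = 0$. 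Using $y^p/z^2 = y-1$ we get $\mathcal{R}_{\mathcal{W}_N}(z) \to (1 + y^p/z^2)/z = y/z$. Finally, the Fuss-Catalan generating function $T_p$ is characterized by the Lagrange-inversion identity $T_p(w) = 1 + w\,T_p(w)^p$, so $y = T_p(1/z^2)$ and the announced limit $\mathcal{R}_{\mathcal{W}_N}(z) \to T_p(1/z^2)/z$ follows.

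The main obstacle is making the saddle-point step rigorous: identifying the precise domain $|z|<\omega_c$ of $z$ on which a real saddle $s_*(z)$ dominates the deformed contour and the Laplace expansion is uniform, and coupling this with quantitative concentration of $\log\Xi_{\mathcal{W}_N}$ strong enough to exchange $\log\mathbb{E}$ and $\mathbb{E}\log$. By contrast, once the saddle equation has been extracted, the identification with $T_p(1/z^2)$ is purely algebraic and immediate.
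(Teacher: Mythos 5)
First, note that this proposition is cited from Gurau's paper \cite{gur1}; the present paper does not prove it, and only records that the original argument is a saddle-point analysis. Your proposal is therefore a reconstruction of the cited proof rather than a variant of anything in this paper, and the computational core of it checks out: the variance identity $\mathbb{E}\big[(\mathcal{W}\cdot\phi^p)^2\big]=p\,\|\phi\|^{2p}/N^{p-1}$ is correct (writing $\mathcal{W}\cdot\phi^p=\sum_{[i]}|\mathcal{P}_i|\,\mathcal{W}_i\,\phi_{i_1}\cdots\phi_{i_p}$ over orbits, the factor $|\mathcal{P}_i|^2\cdot p/|\mathcal{P}_i|$ recombines the sum over all tuples), the radial reduction to $\Phi_z(s)=\log s-\tfrac{s^2}{2}+\tfrac{s^{2p}}{2pz^2}$ is right, the saddle equation $y=1+y^p/z^2$ with $y=s_*^2$ is exactly the Fuss--Catalan functional equation \eqref{eq:gefu}, and the envelope-theorem step giving $\mathcal{R}\to y/z=T_p(1/z^2)/z$ is clean. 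This is also consistent with the paper's own (combinatorial, universal) derivation of the same limit for the moments in Theorems \ref{thm:1} and \ref{thm:2}, which deliberately avoids the saddle point.

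The genuine gap is the one you half-acknowledge at the end, and it is not merely a matter of ``making Laplace rigorous.'' Your computation produces $\tfrac1N\log\mathbb{E}[\Xi_{\mathcal{W}_N}(z)]$, the \emph{annealed} free energy, whereas Proposition \ref{res} requires $\tfrac1N\log\Xi_{\mathcal{W}_N}(z)$ for the random tensor itself. Concentration of $\log\Xi$ around its mean (Poincar\'e) does not bridge $\mathbb{E}\log\Xi$ and $\log\mathbb{E}\Xi$; by Jensen one only gets an inequality, and closing it requires a second-moment argument $\mathbb{E}[|\Xi|^2]\le C\,|\mathbb{E}\Xi|^2$ on the relevant domain of $z$ --- this is the quenched-equals-annealed step, and it is where the restriction to a domain bounded by $\omega_c$ actually comes from. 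An additional wrinkle: for $z\in i\mathbb{R}$ the coupling $\tfrac{1}{pz}\mathcal{W}\cdot\phi^p$ is purely imaginary, so $\Xi(z)$ is an oscillatory integral taking complex values; $\log\Xi$ must be handled as a complex logarithm and Lipschitz/Poincar\'e bounds on it degenerate wherever $|\Xi|$ is small, so one must first show $|\Xi|$ stays comparable to $|\mathbb{E}\Xi|$. Finally, passing from convergence of $\tfrac1N\log\Xi$ to convergence of its $z$-derivative via analyticity requires local uniformity in $z$, which has to be extracted from the same estimates. These are precisely the points that occupy the analytic part of \cite{gur1}; your sketch identifies the right limit and the right mechanism but does not yet constitute a proof.
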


We are going to derive a moments method to prove that the (even) moments of the resolvent trace always converge to the Fuss-Catalan numbers for a Wigner tensor, without the Gaussian assumption.

%%%%%%%%%%%%%%%%%%%%%%%%%%%%%%%%%%%%%%%%%%%%%%%%%%%%%%%%%%%%%%%

\section{Convergence of the moments}

\subsection{Hypergraphs} \label{sec:hyp}

We introduce here the notion of hypergraph, useful for all the following of our work. 
\begin{definition}[Hypergraph]\label{def:hyp}
    A simple hypergraph $H=(V,E)$ is a set of vertices $V$ and a set of hyperedges $E$, that are multiset of vertices. As a vertex $v$ may appear multiple times in a hyperedge $e$, we denote $l_v(e)$ the multiplicity of $v$ in $e$ (equal to $0$ if $v$ not in $e$). 
    \par A hypergraph is more generally the data of $H=(V,E,m)$ where $H^*= (V,E)$ is a simple hypergraph and $m: E \rightarrow \N^*$ is a function associating a multiplicity to each hyperdge. The simple hypergraph $H^*$ is called the reduced hypergraph of $H$, it is the hypergraph after forgetting the multiplicity of the hyperedges.
\end{definition}

\noindent The number of vertices, counted with multiplicity, in a hyperedge $e$ is called the order of $e$, that is 
$$\vert e\vert=\sum_{v\in V} l_v(e).$$
The number of hyperedges a vertex $v$ belongs to is called the degree of $v$, that is 
$$ d(v)=\sum_{e\in E} l_v(e). $$
A hypergraph is \underline{$p$-uniform} if $\vert e \vert = p$ for all $e \in E$, and it is \underline{$r$-regular} if $d(v)=r$ for all $v \in V$.

\begin{definition}\label{def:cycle}
    A cycle of length $k\geq 1$ in a hypergraph is a sequence $(v_1,e_1,v_2,\ldots,e_{k},v_{k+1})$ such that
    \begin{itemize}
        \item for all $i$, $v_i \in V$, $e_i \in E$ and $v_1=v_{k+1}$,
        \item for all $i$, $l_{v_i}(e_i)\geq 1$, $l_{v_{i+1}}(e_i)\geq 1$ and if $v_i=v_{i+1}$, $l_{v_i}(e_i) \geq 2$,
        \item the $e_i$ are distinct.
    \end{itemize}
A simple hypergraph with no cycle is called a hypertree.
\end{definition}

Remark that a hypergraph has a cycle of length $1$ if and only if it has a vertex of multiplicity at least $2$ in a hyperedge. Remark also that a cycle in the hypergraph is a cycle in the bipartite graph obtained after adding a new vertex $w_e$ of type $2$ for each hyperedge $e$ and linking $v \in V$ of type $1$ with all the vertices $w_e$ of type $2$ such that $v\in e$.

We say that a hypergraph $H=(V,E,m)$ is a \underline{double hypertree} if $m(e)=2$ for all $e \in E$ and $H^*=(V,E)$ is a hypertree. In other words all edges have multiplicity $2$ and the reduced hypergraph is a hypertree.

\begin{lemma}[Hypergraph Euler's formula]\label{lem:euler}
    If $H=(V,E)$ is a connected $p$-uniform simple hypergraph, then
    $$ 1 - \vert V \vert + (p-1) \vert E \vert \geq 0, $$
    with equality if and only if $H$ is a hypertree.
\end{lemma}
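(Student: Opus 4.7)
The plan is to prove this by reducing it to the familiar Euler inequality for ordinary connected (multi)graphs, via the bipartite incidence encoding the paper already introduced just before the statement. Concretely, let $B = B(H)$ be the bipartite multigraph with vertex set $V \sqcup \{w_e : e \in E\}$ and, for every $v \in V$ and $e \in E$, exactly $l_v(e)$ edges between $v$ and $w_e$. Connectedness of $H$ is equivalent to connectedness of $B$, so the only inputs needed about $B$ are combinatorial identities plus the classical fact that a connected (multi)graph satisfies $|E_B| \geq |V_B| - 1$, with equality if and only if it is a tree.

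Next I would just count. The number of vertices of $B$ is $|V| + |E|$. The number of edges, by $p$-uniformity, is
\[ |E_B| = \sum_{e \in E} \sum_{v \in V} l_v(e) = \sum_{e \in E} |e| = p \, |E| . \]
Plugging into the graph inequality yields $p |E| \geq |V| + |E| - 1$, which rearranges exactly to $1 - |V| + (p-1)|E| \geq 0$. That establishes the inequality.

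For the equality case I would use that $|E_B| = |V_B| - 1$ in a connected graph holds if and only if $B$ is a tree, and then translate the tree condition back to $H$. Being a tree forbids two things in $B$: multi-edges (giving cycles of combinatorial length $2$ in $B$) and longer cycles. A multi-edge between $v$ and $w_e$ is exactly $l_v(e) \geq 2$, which by Definition \ref{def:cycle} is precisely a cycle of length $1$ in $H$; and a cycle $v_1 w_{e_1} v_2 w_{e_2} \cdots v_k w_{e_k} v_1$ in $B$ (which must have even length $2k$ with $k \geq 2$ since $B$ is bipartite) is, by the very observation made in the paper after Definition \ref{def:cycle}, exactly a cycle of length $k$ in $H$. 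Thus $B$ is a tree if and only if $H$ is acyclic in the sense of Definition \ref{def:cycle}, i.e. $H$ is a hypertree.

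The only subtlety (and the closest thing to a real obstacle) is bookkeeping around the multiplicities $l_v(e)$: one must allow $B$ to be a multigraph so that the edge count $|E_B| = p|E|$ really uses $p$-uniformity, and one must remember that a multi-edge in $B$ corresponds to a length-$1$ cycle in $H$ so that \emph{tree in $B$} and \emph{hypertree in $H$} really agree. Once this is set up cleanly, the lemma follows immediately from the ordinary graph Euler inequality.
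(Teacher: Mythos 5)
Your proof is correct and follows exactly the paper's own argument: the bipartite incidence multigraph $V \sqcup \{w_e\}$ with edge multiplicities $l_v(e)$, the counts $|V_B|=|V|+|E|$ and $|E_B|=p|E|$, and the classical Euler inequality for connected multigraphs. Your extra care in matching multi-edges of $B$ with length-$1$ cycles of $H$ for the equality case is a welcome elaboration of a point the paper leaves implicit, but it is not a different approach.
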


\begin{proof}
Let $H=(V,E)$ be a connected $p$-uniform simple hypergraph. Consider $G$ the bipartite (multi)graph associated to $H$ with vertices $V(G)=V \sqcup \{w_e , e \in E \}$ and edges $E(G)$ given by the pairs  $(v, w_e)$ such that $v \in e$, with the same multiplicity as $v$ in $e$. This multigraph is a tree if and only if $H$ is a hypertree. The Euler formula applied to this connected graph gives
\begin{align*}
    0 \leq 1 - \vert V(G) \vert + \vert E(G) \vert &= 1 - (\vert V \vert + \vert E \vert)  + p \vert E \vert \\
    &= 1 - \vert V \vert + (p-1) \vert E \vert.
\end{align*} 
\end{proof}

Let $b=(\sigma,\tau)$ be a $p$-regular combinatorial map, $i.e.$ a $2$-uniform and $p$-regular combinatorial (hyper)map. 
Then $b^\dagger=(\tau,\sigma)$ is a $p$-uniform and $2$-regular combinatorial hypermap that is 
$$ H(b^\dagger)=(V(b^\dagger),E(b^\dagger))$$
is a $p$-uniform and $2$-regular hypergraph. 
Now, if $\pi$ is a partition of the edges of $b$ ($i.e$ of the vertices of $b^\dagger$), then $b_\pi$ is a $p$-regular combinatorial hypermap and $b_\pi^\dagger$ is a $p$-uniform combinatorial hypermap (where each vertex belongs to an even number of hyperedges, not necessarily exactly two).

\subsection{Proof of Lemma \ref{lem:tree}}

Let $p\geq 3$ and $\mathcal{T} \in \mathcal{S}_p(N)$ with entries given by random variables that are independent up to symmetries, centered, with finite moments and variance of the off-diagonal entries given by
\[ \mathbb{E}\left[ \mathcal{T}^2_{i_1,\ldots,i_p}\right] = \frac{1}{(p-1)!}. \]

\begin{proof}[Proof of Lemma \ref{lem:tree}]
Denote $\gamma := \frac{p-1}{2}$. 
Let $b \in \mathcal{B}^{(p)}$ and denote $n= \vert V(b) \vert =\vert E(b^\dagger) \vert$ the number of vertices of $b$, we compute
\begin{align*}
    \frac{1}{N}\mathbb{E} \left[ \mathrm{Tr}_b \left(\frac{\T}{N^\gamma}\right) \right] &= \frac{1}{N} \mathbb{E} \left[ \sum_{\pi \in \mathcal{P}(np/2)} \mathrm{Tr^0}_{b_{\pi}}\left(\frac{\T}{N^\gamma}\right) \right] \\
    &= \frac{1}{N} \mathbb{E}\left[ \sum_{\pi \in \mathcal{P}(np/2)} \sum_{ \genfrac{}{}{0pt}{}{1\leq a_1,\ldots,a_{\vert \pi \vert} \leq N}{\text{distinct}} } \prod_{v \in V(b)} \frac{\mathcal{T}_{a_{\delta(v)_1} \ldots a_{\delta(v)_p}} }{N^\gamma} \right] \\
    &= \frac{1}{N} \mathbb{E}\left[ \sum_{\pi \in \mathcal{P}(np/2)} \sum_{ \genfrac{}{}{0pt}{}{1\leq a_1,\ldots,a_{\vert \pi \vert} \leq N}{\text{distinct}} } \prod_{e=(v_1,\ldots,v_p) \in E(b_\pi^\dagger)} \frac{\mathcal{T}_{a_{v_1} \ldots a_{v_p}} }{N^\gamma} \right] \\
    &= N^{-(1 + \gamma n ) } \sum_{\pi \in \mathcal{P}(np/2)} \sum_{ \genfrac{}{}{0pt}{}{1\leq a_1,\ldots,a_{\vert \pi \vert} \leq N}{\text{distinct}} } \mathbb{E}\left[ \prod_{e=(v_1,\ldots,v_p) \in E(b_\pi^\dagger)} \mathcal{T}_{a_{v_1} \ldots a_{v_p}} \right]
\end{align*}
Recall that $H(b_\pi^\dagger)=(V(b_\pi^\dagger),E(b_\pi^\dagger))$ is a $p$-uniform hypergraph, denote $H_\pi^*=(V_\pi^*,E_\pi^*)$ its reduced simple hypergraph and for $e\in E_\pi^*$, $m(e)$ its multiplicity in $H(b_\pi^\dagger)$. 
Then, we have
$$ \mathbb{E}\left[ \prod_{e=(v_1,\ldots,v_p) \in E(b_\pi^\dagger)} \mathcal{T}_{a_{v_1} \ldots a_{v_p}} \right] = \prod_{e=(v_1,\ldots,v_p) \in E_\pi^*} \mathbb{E}\left[ \mathcal{T}_{a_{v_1} \ldots a_{v_p}}^{m(e)} \right]. $$
As the entries are centered, this product is equal to $0$ as soon as there exists $e \in E(b_\pi^\dagger)$ such that $m(e)=1$. 
This means that we must consider only the combinatorial hypermaps $b_\pi$ such that 
$$ \forall e \in E(b_\pi^\dagger), m(e) \geq 2, $$
or equivalently $\forall v \in V(b_\pi), m(v) \geq 2$. 
We deduce that
\begin{equation}\label{eq:1}
    n = \sum_{e\in E_\pi^*} m(e) \geq 2 \vert E_\pi^* \vert,
\end{equation}  
with equality if and only if $m(e)=2$ for all $e$. In particular, $n$ must be even for the equality case.

Recall also that $\pi$ is a partition of the edges of $b$, or equivalently of the vertices of $b^\dagger$. 
Hence, by Lemma \ref{lem:euler},
\begin{equation}\label{eq:2}
    \vert \pi \vert = \vert V_\pi^* \vert \leq 1 + (p-1) \vert E_\pi^* \vert , 
\end{equation} 
with equality if and only if $H_\pi^*$ is a hypertree. 
In particular, in this case, if $e=(v_1,\ldots,v_p)$ is a hyperedge, then $v_1,\ldots,v_p$ are distinct. 
Hence, if $H_\pi^*$ is a hypertree, 
\begin{align*}
    \sum_{ \genfrac{}{}{0pt}{}{1\leq a_1,\ldots,a_{\vert \pi \vert} \leq N}{\text{distinct}} } \prod_{e=(v_1,\ldots,v_p) \in E_\pi^*} \mathbb{E}\left[ \mathcal{T}_{a_{v_1} \ldots a_{v_p}}^{m(e)} \right]
    &= N(N-1)\ldots (N-\vert \pi \vert +1) \prod_{e \in E_\pi^*} \mathbb{E}\left[ \mathcal{T}_{1 \ldots p}^{m(e)} \right] \\
    &= N^{\vert \pi \vert} \prod_{e \in E_\pi^*} \mathbb{E}\left[ \mathcal{T}_{1 \ldots p}^{m(e)} \right] + \mathcal{O}\left( N^{\vert \pi \vert-1}\right) \\
    &= N^{1 + (p-1) \vert E_\pi^* \vert} \prod_{e \in E_\pi^*} \mathbb{E}\left[ \mathcal{T}_{1 \ldots p}^{m(e)} \right] + \mathcal{O}\left( N^{(p-1) \vert E_\pi^* \vert}\right).
\end{align*}
When $H_\pi^*$ is not necessarily a hypertree, we have in all generality that 
$$ \sum_{ \genfrac{}{}{0pt}{}{1\leq a_1,\ldots,a_{\vert \pi \vert} \leq N}{\text{distinct}} } \prod_{e=(v_1,\ldots,v_p) \in E_\pi^*} \mathbb{E}\left[ \mathcal{T}_{a_{v_1} \ldots a_{v_p}}^{m(e)} \right] = \mathcal{O}\left(N^{\vert \pi \vert}\right), $$
because all the moments of the entries of $\T$ are finite. 
Hence we have that 
$$ N^{-(1 + \gamma n ) } \sum_{ \genfrac{}{}{0pt}{}{1\leq a_1,\ldots,a_{\vert \pi \vert} \leq N}{\text{distinct}} } \mathbb{E}\left[ \prod_{e=(v_1,\ldots,v_p) \in E(b_\pi^\dagger)} \mathcal{T}_{a_{v_1} \ldots a_{v_p}} \right] = \mathcal{O}\left(N^{\vert \pi \vert - 1 - \frac{p-1}{2}n} \right). $$
Using Equation \eqref{eq:1} and Equation \eqref{eq:2} gives 
$$ \vert \pi \vert - 1 - \frac{p-1}{2}n \leq \vert \pi \vert - 1 - (p-1)\vert E_\pi^* \vert \leq 0, $$
with equality between left hand side and right hand side if and only if $b_\pi^\dagger$ is a double hypertree (all the hyperedges have multiplicity exactly $2$ and the reduced hypergraph is a hypertree). 
Finally, we proved that 
\begin{equation}\label{eq:dh}
    \frac{1}{N}\mathbb{E} \left[ \mathrm{Tr}^0_{b_\pi} \left(\frac{\T}{N^\gamma}\right) \right]
    = \mathbb{1}_{H(b^\dagger_\pi) \text{ is a double hypertree }} ( \underbrace{\mathbb{E}\left[ \mathcal{T}_{1 \ldots p}^2 \right]}_{=1/(p-1)!} )^{\vert E(b^\dagger) \vert/2} + \mathcal{O}\left( \frac{1}{N} \right).
\end{equation} 
 
\paragraph{Melonics.} 
Let $p\geq 3$ and $b^\dagger_\pi$ be a $p$-uniform combinatorial hypermap such that $H(b^\dagger_\pi)$ is a double hypertree. 
How many $p$-uniform and $2$-regular hypermap may be partitioned into $b^\dagger_\pi$? 
Only one! 
Consider the problem recursively, $H_\pi^*$ is a hypertree so there exists a hyperedge $e=(v_1,\ldots,v_p)$ with $v_2,\ldots,v_p$ being leaves. These leaves by definition belong only to one hyperedge (of multiplicity $2$) in $H(b^\dagger_\pi)$. 
Hence the two copies of $v_1$ must be matched by a compatible $2$-regular hypermap. We can erase this hyperedge and repeat recursively.

Reciprocally, let $b^\dagger=(\tau,\sigma)$ be a $p$-uniform and $2$-regular hypermap that may be partitioned into $b^\dagger_\pi$ such that $H(b^\dagger_\pi)$ is a double hypertree. 
It cannot be partitioned into another hypermap whose associated hypergraph is a double hypertree for the same reason, $H(b^\dagger_\pi)$ must contain a hyperedge with $p-1$ leaves of the reduced hypertree. As $p \geq 3$, two hyperedges sharing $p-1 \geq 2$ in a double hypertree must be the same so this enforce the two copies of the last vertex in the hyperedge to be in the same block of $\pi$ to avoid a cycle. 
Recursively, this gives a unique possible $\pi$. 
We call unfolded double hypertree the hypergraph $H(b^\dagger)$. 
It is a $p$-uniform and $2$-regular simple hypergraph, see Figure \ref{fig:hd}.

We call melonic the dual under $\dagger$ of an unfolded double hypertree. 
It is a $p$-regular graph. 
Equation \ref{eq:dh} gives
\begin{equation}\label{eq:me}
    \frac{1}{N}\mathbb{E} \left[ \mathrm{Tr}_b \left(\frac{\T}{N^\gamma}\right) \right] = \mathbb{1}_{G(b) \text{ is a melonic graph }} \frac{1}{(p-1)!^{\vert V(b) \vert/2}} + \mathcal{O}\left( \frac{1}{N} \right).
\end{equation} 
The result is proved. 
We call {\em melonic map} a combinatorial map such that $G(b)$ is a melonic graph. 
In regard of what we say just before, if $b$ is a melonic map, there is a unique partition $\pi$ such that $b_{\pi}$ is "well partitioned", in the sense that $\frac{1}{N}\mathbb{E} \left[ \mathrm{Tr}^0_{b_\pi} \left(\frac{\T}{N^\gamma}\right) \right]$ does not vanish. 
And no other combinatorial maps may be partitioned into $b_{\pi}$ (it is true for $p\geq 3$, see Remark \ref{rem:matrice} for the slight difference in the matrix case).
\end{proof}

\begin{figure}[H]
    \center
\begin{tikzpicture}[scale=0.55]
    \draw [ double distance = 4pt ] (0,0) -- (3,-4) ;
    \draw [ double distance = 4pt ] (0,0) -- (-3,-4) ;
    \draw [ double distance = 4pt ] (-3,-4) -- (3,-4) ;
    \draw [ double distance = 4pt ] (-3,-4) -- (0,-8) ;
    \draw [ double distance = 4pt ] (0,-8) -- (-6,-8) ;
    \draw [ double distance = 4pt ] (-3,-4) -- (-6,-8) ;
    \filldraw[black] (0,0) circle (5pt);
    \filldraw[black] (-3,-4) circle (5pt);
    \filldraw[black] (-6,-8) circle (5pt);
    \filldraw[black] (3,-4) circle (5pt);
    \filldraw[black] (0,-8) circle (5pt);
    \draw (0,0) node[above right] {$a_1$};
    \draw (3,-4) node[above right] {$a_2$};
    \draw (-3,-4) node[above left] {$a_3=a_6$};
    \draw (0,-8) node[above right] {$a_4$};
    \draw (-6,-8) node[above left] {$a_5$};
    \draw (-5,-1) node[above left] {$H(b^\dagger_\pi)$};
    \draw (10,0) .. controls (10,-1) and (9,-2) .. (8,-3);
    \draw (10,0) .. controls (10,-1) and (11,-2) .. (12,-3);
    \draw (8,-3) .. controls (9,-2) and (11,-2) .. (12,-3);
    \draw (8,-3) .. controls (8,-4) and (7,-5) .. (6,-6);
    \draw (8,-3) .. controls (8,-4) and (9,-5) .. (10,-6);
    \draw (6,-6) .. controls (7,-5) and (9,-5) .. (10,-6);
    \draw (8,-9) .. controls (8,-8) and (7,-7) .. (6,-6);
    \draw (8,-9) .. controls (8,-8) and (9,-7) .. (10,-6);
    \draw (6,-6) .. controls (7,-7) and (9,-7) .. (10,-6);
    \draw (10,0) .. controls (11,-1) and (11.5,-2) .. (12,-3);
    \draw (8,-9) .. controls (10,-8) and (14,-3) .. (12,-3);
    \draw (8,-9) .. controls (15,-6) and (16,-2) .. (10,0);
    \filldraw[black] (10,0) circle (5pt);
    \filldraw[black] (6,-6) circle (5pt);
    \filldraw[black] (8,-3) circle (5pt);
    \filldraw[black] (12,-3) circle (5pt);
    \filldraw[black] (10,-6) circle (5pt);
    \filldraw[black] (8,-9) circle (5pt);
    \draw (10,0) node[above right] {$a_1$};
    \draw (12,-3) node[above right] {$a_2$};
    \draw (8,-3) node[above left] {$a_3$};
    \draw (10,-6) node[above right] {$a_4$};
    \draw (6,-6) node[above left] {$a_5$};
    \draw (8,-9) node[above left] {$a_6$};
    \draw (16,-1) node[above left] {$H(b^\dagger)$};
    \draw (-5.5,-12) .. controls (-4.5,-9) and (1.5,-9) .. (2.5,-12);
    \draw (-5.5,-12) .. controls (-4.5,-11) and (1.5,-11) .. (2.5,-12);
    \draw (-5.5,-12) .. controls (-5.5,-13) and (-4,-14) .. (-3,-14);
    \draw (-3,-14) .. controls (-2,-13) and (-1,-13) .. (0,-14);
    \draw (-3,-14) .. controls (-2,-14.5) and (-1,-14.5) .. (0,-14);
    \draw (2.5,-12) .. controls (2.5,-13) and (1,-14) .. (0,-14);
    \filldraw[black] (-5.5,-12) circle (5pt);
    \filldraw[black] (2.5,-12) circle (5pt);
    \filldraw[black] (0,-14) circle (5pt);
    \filldraw[black] (-3,-14) circle (5pt);
    \draw (-1.5,-10) node {$a_1$};
    \draw (-1.5,-11.5) node {$a_2$};
    \draw (-5.5,-13.5) node {$a_3=a_6$};
    \draw (2.5,-13.5) node {$a_3=a_6$};
    \draw (-1.5,-13) node {$a_4$};
    \draw (-1.5,-14.5) node {$a_5$};
    \draw (-5,-16) node[above left] {$H(b_\pi)$};
    \draw (-5.5+13,-12) .. controls (-4.5+13,-9) and (1.5+13,-9) .. (2.5+13,-12);
    \draw (-5.5+13,-12) .. controls (-4.5+13,-11) and (1.5+13,-11) .. (2.5+13,-12);
    \draw (-5.5+13,-12) .. controls (-5.5+13,-13) and (-4+13,-14) .. (-3+13,-14);
    \draw (-3+13,-14) .. controls (-2+13,-13) and (-1+13,-13) .. (0+13,-14);
    \draw (-3+13,-14) .. controls (-2+13,-14.5) and (-1+13,-14.5) .. (0+13,-14);
    \draw (2.5+13,-12) .. controls (2.5+13,-13) and (1+13,-14) .. (0+13,-14);
    \filldraw[black] (-5.5+13,-12) circle (5pt);
    \filldraw[black] (2.5+13,-12) circle (5pt);
    \filldraw[black] (0+13,-14) circle (5pt);
    \filldraw[black] (-3+13,-14) circle (5pt);
    \draw (-1.5+13,-10) node {$a_1$};
    \draw (-1.5+13,-11.5) node {$a_2$};
    \draw (-5+13,-13.5) node {$a_3$};
    \draw (2+13,-13.5) node {$a_6$};
    \draw (-1.5+13,-13) node {$a_4$};
    \draw (-1.5+13,-14.5) node {$a_5$};
    \draw (16,-16) node[above left] {$G(b)$};
\end{tikzpicture}
    \caption{A double hypertree (above left), unfolded double hypertree (above right), melonic well partitioned (below left) and melonic graph (below right).}\label{fig:hd}
\end{figure}

\subsection{Proof of Theorem \ref{thm:1}}

Fix $p\geq 3$. 
We remind that
$$ \frac{1}{N}\mathbb{E} \left[ I_n \left(\frac{\T}{N^\gamma}\right) \right] = \sum_{b \in \mathcal{B}_n^{(p)}} \frac{1}{N}\mathbb{E} \left[ \mathrm{Tr}_b \left(\frac{\T}{N^\gamma}\right) \right]. $$

\begin{proof}[Proof of Theorem \ref{thm:1}]
It remains, essentially, to pass from Lemma~\ref{lem:tree} to Theorem~\ref{thm:1} to perform a counting argument.

We aim to count rooted melonic maps $b$. It is more convenient to first count those that are planar. We therefore proceed in two steps. First, we count the number of melonic maps canonically associated with a given planar melonic map, obtained by untwisting the edges. Second, we count rooted planar melonic maps.

The first step is straightforward. Passing from a rooted melonic map to its rooted planar structure yields a factor
$$ (p-1)!^{|V(b)|/2} = (p-1)!^{|E(b^\dagger)|/2}.$$
Indeed, let $b=(\sigma,\tau)$ be a rooted melonic map. Recall that $b$ is unlabelled. Assume that $1$ is the root and that $\tau(1)=j$. Denote by $(1,v_1,\ldots,v_{p-1})$ and $(j,w_1,\ldots,w_{p-1})$ the cycles of $1$ and $j$ under $\sigma$, respectively. The equivalence class of $b$ contains all maps $b'=(\sigma',\tau)$ such that the cycle of $j$ under $\sigma'$ is of the form
$$ (j,w_{\theta(1)},\ldots,w_{\theta(p-1)}), $$
for some $\theta\in\mathfrak{S}(p-1)$. There are exactly $(p-1)!$ such maps. One may then repeat the same argument recursively for the cycles of $\tau(v_1),\ldots,\tau(v_{p-1})$, considered as new roots. The melonic (or hypertree) structure ensures that these choices can be made independently. Hence, the equivalence class of $b$ has cardinality $(p-1)!^{\#\sigma/2} = (p-1)!^{|V(b)|/2}$. These classes form a partition of the set of rooted melonic maps. Exactly one representative in each class is planar: namely, the one corresponding to the permutation $\theta$ such that, if the branch of $v_i$ contains $w_k$, then $\theta(k)=i$, for all $i$ and all pairs of vertices of $b$.

Note that the same counting can be performed in the dual formulation. Observe that if $b$ is an unlabelled rooted combinatorial map, then its dual $b^\dagger$ is formally a rooted fully directed hypergraph, in which hyperedges are ordered tuples rather than multisets. The cyclic order of edges around each vertex induces a cyclic order of vertices within each hyperedge. A \emph{fully directed double hypertree} is then defined as an fully directed hypergraph that becomes a double hypertree once the orientations of the hyperedges are forgotten. If $H$ is a rooted fully directed plane simple hypertree, then there are $(p-1)!^{E(H)/2}$ rooted fully directed double hypertrees in the equivalence class of $H$. Indeed, the structure is fixed by the first occurrence of each oriented hyperedge inherited from the root, and there are then $(p-1)!$ possible orderings for the second occurrence of each hyperedge.

The second step is classical. Let $t_n=t_n^{(p)}$ denote the number of rooted planar melonic maps with $2n$ vertices, and let
$$ T_p(z):=\sum_{n\geq 0} t_n z^n $$
be the associated generating series. It satisfies
\begin{equation}\label{eq:gefu}
T_p(z) = 1 + z\, T_p(z)^p,
\end{equation}
since a rooted planar melonic map is either the empty graph, or a single melon in which a melonic graph is inserted into each of the $p$ edges. The same recursive description applies to $p$-uniform rooted fully directed plane hypertrees: such a hypertree is either empty, or consists of a single hyperedge with $p$ rooted hypertrees attached to its vertices. Consequently, the number of rooted fully directed plane hypertrees with $n$ vertices is also given by $t_n$.

Equation \eqref{eq:gefu} is well known to characterize the generating function of the Fuss--Catalan numbers, so that
$$ t_n^{(p)} = F_p(n). $$

From the perspective of random matrix theory, where double trees are counted by Dyck paths, it is instructive to describe an explicit bijection between $p$-uniform rooted fully directed plane hypertrees and $p-1$-Dyck paths. For $n\geq 0$, a $(p-1)$-Dyck path is a lattice path in $\mathbb{Z}^2$ from $(0,0)$ to $(np,0)$, staying weakly above the horizontal axis, and consisting of steps in
$$\{(1,1),(1,-(p-1))\}.$$
Let $H=(V,E)$ be a rooted fully directed plane hypertree. For $v,w\in V$, we write $v\sim w$ if the two vertices appear consecutively in an oriented hyperedge. The distance between the root $v_0$ and a vertex $w$ is defined as the minimal length of a sequence $v_1,\ldots,v_k=w$ such that $v_i\sim v_{i+1}$ for all $i$. We associate to $H$ the sequence of distances from the root obtained by performing a depth-first search of the vertices, following the orientation of the hyperedges and visiting only previously unvisited vertices. Along this traversal, the distance increases by one at each step, except when the exploration of a hyperedge is completed, in which case the distance decreases by $p-1$. The resulting sequence therefore defines a $(p-1)$-Dyck path. It is straightforward to verify that this construction is bijective, by reconstructing the hypertree step by step starting from the end of the $(p-1)$-Dyck path.

Finally, the number of $(p-1)$-Dyck paths is well known (see \cite{cameron}) and given by
$$\frac{1}{(p-1)n+1}\binom{np}{n} = \frac{1}{pn+1}\binom{np+1}{n} = F_p(n).$$
This description also makes it easy to obtain a bijection with non-crossing partitions of $n(p-1)$ elements into blocks of size divisible by $p-1$, which are likewise counted by Fuss--Catalan numbers and are again natural objects from the perspective of random matrix theory.
\end{proof}

\begin{remark}[Matrix case]\label{rem:matrice}
    In the case $p=2$, all the proof of Lemma \ref{lem:tree} is correct until Equation \eqref{eq:dh}, that is 
    $$ \frac{1}{N}\mathbb{E} \left[ \mathrm{Tr}^0_{b_\pi} \left(\frac{\T}{N^\gamma}\right) \right]
    = \mathbb{1}_{G(b^\dagger_\pi) \text{ is a double tree }} + \mathcal{O}\left( \frac{1}{N} \right).$$
    Then it is still true that the number of rooted (double) plane trees is equal to the number of Dyck path and is equal to the Catalan numbers. 
    Hence, it is also still true that $\frac{1}{N}\mathbb{E} \left[ \mathrm{Tr}^0_{b_\pi} \left(\frac{\T}{N^\gamma}\right) \right]$ does not vanish if and only if $b_\pi$ is a melonic map well partitioned, that is in this context 
    $$ \frac{1}{N}\mathbb{E} \left[ \mathrm{Tr}^0_{b_\pi} \left(\frac{\T}{N^\gamma}\right) \right] = \mathbb{1}_{\pi \text{ is a non-crossing partition }} + \mathcal{O}\left( \frac{1}{N} \right).$$
    The picture is only slightly different according to the fact that there is only one $2$-regular rooted connected map with $n$ unlabelled vertices, it is the cycle, but now it can be well partitioned by several partitions which are the non-crossing partitions. 
    Indeed, the argument that two hyperedges sharing $p-1$ vertices must me the same in a double hypertree is no longer holding for $p=2$ (indeed two neighbor edges of a double tree surely share a vertex). 
    The result may now be written as
\begin{align*}
    \frac{1}{N}\mathbb{E} \left[ I_{2n} \left(\frac{\mathcal{M}}{\sqrt{N}}\right) \right] = \frac{1}{N}\mathbb{E} \left[ \mathrm{Tr}_{\mathrm{cycle}} \left(\frac{\M}{\sqrt{N}}\right) \right]
    &= \sum_{\pi \in \mathcal{P}(n)} \left(\mathbb{1}_{\pi \text{ is a non-crossing partition }} + \mathcal{O}\left( \frac{1}{N} \right)\right) \\ 
    &= F_2(n) + \mathcal{O}\left( \frac{1}{N} \right).
\end{align*} 
\end{remark}

\subsection{p finite moments}\label{sec:pmom}

Let $p\geq 3$ and $\mathcal{T} \in \mathcal{S}_p(N)$. 
Our goal in this section is to understand when does the previous result hold if we delete the assumption that all the moments of the entries are finite. 
We consider the following assumptions:
\begin{itemize}
    \item[$(i)$] the entries $\mathcal{T}_{i_1,\ldots,i_p}$ are centered with variance $\frac{1}{(p-1)!}$,
    \item[$(ii)$] the entries $\mathcal{T}_{i_1,\ldots,i_p}$ are $i.i.d.$ having the same law as a random variable $X$, 
    \item[$(iii)$] the entries $\mathcal{T}_{i_1,\ldots,i_p}$ have $p$ finite moments,
    \item[$(iv)$] the law of the entries is symmetric.
\end{itemize}
We discuss about the necessity of these assumptions in Remark \ref{rem:nec}. 
We will prove that we still have convergence in probability towards the Fuss-Catalan numbers if $\mathcal{T}$ satisfies $(i)-(iv)$.
First, we bound the maximal entry of $\mathcal{T}$.

\begin{lemma}[Bound of the max] \label{maxT}
Let $\mathcal{T} \in \mathcal{S}_p(N)$ satisfying $(ii)$ and $(iii)$. There exists a sequence of positive numbers $(\epsilon_N)_{N\geq 1}$ such that $\epsilon_N \rightarrow 0$ and
     \[ \mathbb{P}\left( \mathrm{max}_{i_1,\ldots,i_p} |\mathcal{T}_{i_1,\ldots,i_p}| \leq N \epsilon_N \right) = 1 - \mathcal{O} \left( \epsilon_N \right) \]
     In other words, $\mathrm{max}_{i_1,\ldots,i_p} |\mathcal{T}_{i_1,\ldots,i_p}| = o \left( N \right)$ with high probability.
\end{lemma}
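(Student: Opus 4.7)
The statement is a quantitative tail bound on the maximum of at most $N^p$ entries, all with the same law as a random variable $X$ satisfying $\mathbb{E}|X|^p < \infty$. My plan is to combine a union bound with a sharpening of Markov's inequality that exploits exactly the assumption of $p$ finite moments.

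First I would apply the union bound over the (at most) $N^p$ entries of $\mathcal{T}$:
\[ \mathbb{P}\Big( \max_{i_1,\ldots,i_p} |\mathcal{T}_{i_1,\ldots,i_p}| > t \Big) \leq N^p\, \mathbb{P}(|X| > t). \]
The naive Markov bound $\mathbb{P}(|X|>t) \leq \mathbb{E}|X|^p / t^p$ is too weak: plugging $t = N\epsilon_N$ only gives $\mathcal{O}(\epsilon_N^{-p})$ on the right. The key observation is that the assumption $\mathbb{E}|X|^p<\infty$ gives strictly more than Markov. Indeed, by the layer-cake formula $\mathbb{E}|X|^p = p\int_0^\infty t^{p-1}\mathbb{P}(|X|>t)\,dt$ is finite, so necessarily
\[ \varphi_0(t) := t^p\,\mathbb{P}(|X|>t) \xrightarrow[t\to\infty]{} 0. \]
To make this monotone I replace $\varphi_0$ by its non-increasing majorant $\varphi(t):=\sup_{s\geq t}\varphi_0(s)$, which still tends to $0$ as $t\to\infty$.

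Next I would choose $\epsilon_N$. Set
\[ \epsilon_N := \max\!\left(\varphi(\sqrt{N})^{1/(p+1)},\; N^{-1/2}\right). \]
Then $\epsilon_N\to 0$, and moreover $N\epsilon_N \geq \sqrt{N}\to\infty$, so
\[ (N\epsilon_N)^p\,\mathbb{P}\big(|X|>N\epsilon_N\big) \;\leq\; \varphi(N\epsilon_N) \;\leq\; \varphi(\sqrt{N}) \;\leq\; \epsilon_N^{p+1}. \]
Combining with the union bound:
\[ \mathbb{P}\Big( \max |\mathcal{T}_{i_1,\ldots,i_p}| > N\epsilon_N \Big) \;\leq\; N^p\,\mathbb{P}(|X|>N\epsilon_N) \;\leq\; \frac{\epsilon_N^{p+1}}{\epsilon_N^p} \;=\; \epsilon_N, \]
which is exactly the claimed $\mathcal{O}(\epsilon_N)$ error.

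The only non-routine step is the passage from the crude Markov bound to the sharper $\varphi_0(t)\to 0$ statement; once one has isolated this, the choice of a sequence $\epsilon_N$ going to $0$ more slowly than $\varphi(\sqrt{N})^{1/(p+1)}$ is essentially forced by the scaling, and the $N^{-1/2}$ safeguard is there merely to guarantee $\epsilon_N>0$ and $N\epsilon_N\to\infty$ in case $\varphi$ vanishes. The assumption that the entries are i.i.d.\ (condition $(ii)$) is used only to have a single tail function $\mathbb{P}(|X|>t)$ for all entries; no independence between entries is needed, since the argument is purely a union bound.
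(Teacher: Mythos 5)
Your proof is correct, and it reaches the same conclusion by a genuinely more elementary route than the paper. Both arguments share the skeleton (union bound over the $N^p$ entries, then a tail estimate strictly sharper than the crude Markov bound $\mathbb{E}|X|^p/t^p$), but the paper obtains the sharpening via uniform integrability of the family $\big(\mathcal{T}_{i_1,\ldots,i_p}^p\big)$ and the de la Vall\'ee Poussin criterion, producing a non-decreasing $\phi$ with $\phi(x)/x\to\infty$ and $\mathbb{E}[\phi(|X|^p)]<\infty$, and then extracts $\epsilon_N$ by a subsequence construction ($\epsilon_N=1/k$ on blocks $[N_k,N_{k+1})$). You instead use the elementary fact that $\varphi_0(t):=t^p\,\mathbb{P}(|X|>t)\to 0$ — which is cleanest to justify as $t^p\,\mathbb{P}(|X|>t)\le \mathbb{E}\big[|X|^p\mathbb{1}_{|X|>t}\big]\to 0$ by dominated convergence (your layer-cake justification also works, but only because the tail is monotone, so it is worth saying that) — and then give a closed-form choice $\epsilon_N=\max\big(\varphi(\sqrt N)^{1/(p+1)},N^{-1/2}\big)$ whose verification is a two-line monotonicity argument. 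What your approach buys is explicitness and the avoidance of the de la Vall\'ee Poussin machinery; what the paper's approach buys is that it extends verbatim to the non-i.i.d.\ setting, where one needs uniform integrability of the whole family (secured there by $p+1$ finite moments, cf.\ Remark \ref{rem:nec}$(iii)$), whereas your single tail function $\mathbb{P}(|X|>t)$ exists only under identical distribution — a limitation you correctly flag yourself. Both proofs are valid for the lemma as stated.
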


\begin{proof}
As $X^p \in \mathcal{L}^1$, the family $\left(\mathcal{T}_{i_1,\ldots,i_p}^p\right)_{i_1,\ldots,i_p}$ is uniformly integrable. 
Hence we know thanks to the de la Vallée Poussin criterion \cite{chandra} that there exists $\phi: \R_+\rightarrow \R_+$ non-decreasing such that $ L(x):=\frac{\phi(x)}{x} { \underset{x \rightarrow +\infty}{\rightarrow}} +\infty$ and
$$ \mathrm{max}_{i_1,\ldots,i_p} \mathbb{E} \left[ \phi\left(\vert \mathcal{T}_{i_1,\ldots,i_p} \vert^p \right)\right] = \mathbb{E} \left[ \phi\left(\vert X \vert^p \right)\right] =: c < \infty. $$
For $\epsilon > 0$, we then have
\begin{align*}
    \mathbb{P}\left( \mathrm{max}_{i_1,\ldots,i_p} |\mathcal{T}_{i_1,\ldots,i_p}| > N \epsilon \right) 
    &\leq N^p \mathbb{P}\left( |X| > N \epsilon \right) \tag*{(union bound)} \\
    % &= N^p \mathrm{max}_{i_1,\ldots,i_p} \mathbb{P}\left( |\mathcal{T}_{i_1,\ldots,i_p}|^p > N^p \epsilon^p \right) \\
    &= N^p \mathbb{P}\left( \phi \left(|X|^p \right) > \phi \left( N^p \epsilon^p \right) \right) \tag*{($\phi$ is non-decreasing)}  \\
    &\leq N^p \frac{\mathbb{E} \left[ \phi\left(\vert X \vert^p \right)\right]}{\phi \left( N^p \epsilon^p \right)} \tag*{(Markov's inequality)} \\
    &= \frac{c}{\epsilon^p L(N^p \epsilon^p)}
\end{align*}
Now for $k\in \N^*$ fixed, there exists $N_k$ such that $L(N_k^p / k^p) > k^{p+1}$. 
Take 
$$\epsilon_{N_k}=\ldots=\epsilon_{N_{k+1}-1}=\frac{1}{k}.$$
Then $\epsilon_N \rightarrow 0$ and for all $N$, $\epsilon_N^p L(N^p \epsilon_N^p) > (\epsilon_N)^{-1}$. 
Hence,
$$\mathbb{P}\left( \mathrm{max}_{i_1,\ldots,i_p} |\mathcal{T}_{i_1,\ldots,i_p}| > N \epsilon_N \right) \leq c \times \epsilon_N , $$
so $\mathbb{P}\left( \mathrm{max}_{i_1,\ldots,i_p} |\mathcal{T}_{i_1,\ldots,i_p}| \leq N \epsilon_N \right) = 1 -  \mathcal{O} \left( \epsilon_N \right)$ and then $\mathrm{max}_{i_1,\ldots,i_p} |\mathcal{T}_{i_1,\ldots,i_p}| = o \left( N \right)$ with high probability. This concludes the proof of Lemma \ref{maxT}.
\end{proof}

We come back to the initial proof. 
Let $\mathcal{T} \in \mathcal{S}_p(N)$ satisfying $(i)-(iv)$. 
We define 
\[ \widehat{\mathcal{T}}_{i_1,\ldots,i_p} := \mathcal{T}_{i_1,\ldots,i_p} \mathbb{1}_{| \mathcal{T}_{i_1,\ldots,i_p} | \leq N \epsilon_N}, \]
and we have by $(iv)$, $(i)$ and Lemma \ref{maxT}
$$\left\{
    \begin{array}{ll}
        \mathbb{E} \left[ \widehat{\mathcal{T}}_{i_1,\ldots,i_p} \right] = 0 \\
        \mathrm{Var}\left[ \widehat{\mathcal{T}}_{i_1,\ldots,i_p} \right] = \frac{1}{(p-1)!} \mathbb{P}\left(|\mathcal{T}_{i_1,\ldots,i_p}| \leq N \epsilon_N \right) \rightarrow \frac{1}{(p-1)!} \\
        \mathbb{P}\left(\exists i_1,\ldots,i_p : \widehat{\mathcal{T}}_{i_1,\ldots,i_p} \neq \mathcal{T}_{i_1,\ldots,i_p} \right) \leq \epsilon_N \quad  \quad (\star)
    \end{array}
\right.
$$
Let $n$ be a fixed integer. 
In the following we prove that 
$$\mathbb{E} \left[\widehat{a}_N \right] := \mathbb{E} \left[ \frac{1}{N} I_{2n} \left(\frac{\widehat{\mathcal{T}}}{N^{\frac{p-1}{2}}}\right)\right] = F_p(n) + \mathcal{O} \left( \frac{1}{N} \right).$$ 
Indeed, it is sufficient to obtain the convergence in probability of $a_N:=\frac{1}{N} I_{2n} \left(\frac{\mathcal{T}}{N^{\frac{p-1}{2}}}\right)$ as
$$ a_N - F_p(n) = \left(a_N -\widehat{a}_N \right) + \left( \widehat{a}_N - F_p(n) \right), $$
with $\mathbb{P} \left( \vert a_N -\widehat{a}_N \vert > 0 \right) \leq \epsilon_N $ by $(\star)$ and $\mathbb{P} \left( \vert \widehat{a}_N -F_p(n) \vert > \delta \right) = \mathcal{O} \left(\frac{1}{N \delta} \right) $ by Markov's inequality. 
Now recall that 
\[ \mathbb{E} \left[ \frac{1}{N} I_n \left(\frac{\widehat{\mathcal{T}}}{N^{\frac{p-1}{2}}}\right)\right] = N^{-(1 + \gamma n )} \sum_{\pi \in \mathcal{P}(np/2)} \sum_{ \genfrac{}{}{0pt}{}{1\leq a_1,\ldots,a_{\vert \pi \vert} \leq N}{\text{distinct}} } \prod_{e=(v_1,\ldots,v_p) \in E_\pi^*} \mathbb{E}\left[ \widehat{\mathcal{T}}_{a_{v_1} \ldots a_{v_p}}^{m(e)} \right]  \]
Since $\mathbb{E} \left[ \widehat{\mathcal{T}}_{i_1,\ldots,i_p} \right] = 0$, we still consider only the graphs with 
$$ \forall e \in E(b_\pi^\dagger), m(e) \geq 2, $$
and then
\begin{equation} 
    \vert E^*_\pi \vert \leq \frac{n}{2}
\end{equation} 
Now we write:
\[ m_{2-p} := | \{e \in E(b_\pi^\dagger) : 2 \leq m(e) \leq p \} | , \]
\[ m_{>p} := | \{e \in E(b_\pi^\dagger) : m(e) > p \} | . \]
We have immediately  
\[ m_{2-p} + m_{>p} = \vert E^*_\pi \vert \]
and by definition of $m_{2-p}$, $m_{>p}$, we know that 
\[ n = \sum_{e \in E(b_\pi^\dagger)} m(e) \geq 2 m_{2-p} + \sum_{e: m(e)>p} m(e) . \]
Then, these two relations give:
\begin{equation} \label{m>p}
    \sum_{e: m(e)>p} (m(e)-2) \leq n-2 \vert E^*_\pi \vert .
\end{equation}
Moreover, by $(iii)$, there exists $C>0$ such that if $m(e) \geq p$:
\[\mathbb{E} \left[ \widehat{\mathcal{T}}^{m(e)}_{i_1,\ldots,i_p}\right] = \mathbb{E} \left[\widehat{\mathcal{T}}^{p}_{i_1,\ldots,i_p} \widehat{\mathcal{T}}^{m(e)-p}_{i_1,\ldots,i_p} \right] \leq C \times (N \epsilon_N)^{m(e)-p} \]
Hence we have by Equation \eqref{m>p}
\begin{align*}
    \prod_{e : m(e)>p} \mathbb{E} \left[ \widehat{\mathcal{T}}^{m(e)}_{v_1 \ldots v_p}\right] &\leq C^{m_{>p}} (N\epsilon_N)^{\sum_{e : m(e)>p} (m(e) - p)} \\
    &\leq C' (\epsilon_N)^{m_{>p}} N^{n - 2 \vert E^*_\pi \vert - (p-2) m_{>p}} .
\end{align*}
Let $b$ be a combinatorial hypermap such that $m_{>p} \neq 0$. As $(\epsilon_N)^{m_{>p}} = o (1)$ and $\vert \pi \vert \leq 1 + (p-1) \vert E^*_\pi \vert$ (Equation \eqref{eq:2}), then its contribution is bounded by
\[ o \left( N^{-1 -\gamma n } N^{1 + (p-1) \vert E^*_\pi \vert} N^{n - 2 \vert E^*_\pi \vert - (p-2) m_{>p}}\right) .\]
We can bound the exponent by
\begin{align*}
    - \frac{p-3}{2} n + (p-3) \vert E^*_\pi \vert) - (p-2) m_{>p} = - \left(\underbrace{\frac{p-3}{2}\left(\frac{n}{2}- \vert E^*_\pi \vert\right)}_{\geq 0} + \underbrace{(p-2) m_{>p}}_{\geq 1 \text{ if $m_{>p} \neq 0$}} \right)  \leq -1
\end{align*}
So the asymptotic contribution of a combinatorial map with $m_{>p} \neq 0$ will still be $\mathcal{O}\left(\frac{1}{N}\right)$. 
\newline If $m_{>p} = 0$, then we are back in the case where the moments are bounded and the contribution is asymptotically given by the double hypertrees, the other ones having a contribution in $\mathcal{O}\left(\frac{1}{N}\right)$. We get the result.
\begin{remark}\label{rem:nec}
Some remarks about the necessity of conditions $(i)-(iv)$:
\begin{itemize}
    \item[$(i)$] Again only the variance of the off-diagonal entries need to be $\frac{1}{(p-1)!}$
    \item[$(ii)$] We may only require that the entries $\mathcal{T}_{i_1,\ldots,i_p}$, such that the tuples $(i_1,\ldots,i_p)$ have the same type, are identically distributed and all goes the same. The type is the number of blocks of each size in the partition into equal elements. Two tuples have the same type if they are equal after reordering (let say in increasing order) and bijective relabelling. 
    \item[$(iii)$] In the non-$i.i.d$ case, we need $p+1$ finite moments to ensure that the family $\left(\mathcal{T}_{i_1,\ldots,i_p}^p\right)_{i_1,\ldots,i_p}$ is uniformly integrable and then the same proof applies. 
    \item[$(iv)$] If the law of $\mathcal{T}_{i_1,\ldots,i_p}$ is no longer symmetric, we would have to define $$\widehat{\mathcal{T}}_{i_1,\ldots,i_p} := \frac{\mathcal{T}_{i_1,\ldots,i_p} \mathbb{1}_{| \mathcal{T}_{i_1,\ldots,i_p} | \leq N \epsilon_N} - \mathbb{E}\left[ \mathcal{T}_{i_1,\ldots,i_p} \mathbb{1}_{| \mathcal{T}_{i_1,\ldots,i_p} | \leq N \epsilon_N}\right]}{\kappa_{i_1,\ldots,i_p}}$$ to have a centered variable with right variance, then control $\eta := \mathbb{E}\left[ \mathcal{T}_{i_1,\ldots,i_p} \mathbb{1}_{| \mathcal{T}_{i_1,\ldots,i_p} | \leq N \epsilon_N}\right]$ and bound $\vert I_n(\widehat{\mathcal{T}}+\eta 1^{\otimes p}) - I_n(\widehat{\mathcal{T}}) \vert $. We will not try to give more details here.
\end{itemize}
\end{remark}

\subsection{Proof of Theorem \ref{thm:2}}

Let $p\geq3$. To prove Theorem \ref{thm:2}, it is sufficient to prove Lemma \ref{lem:var}. Indeed, if for all $b,b'$,
$$\mathbb{E} \left[ \mathrm{Tr}_{b}(\mathcal{W}_N)\mathrm{Tr}_{b'}(\mathcal{W}_N)\right] - \mathbb{E} \left[ \mathrm{Tr}_{b}(\mathcal{W}_N)\right] \mathbb{E}\left[\mathrm{Tr}_{b'}(\mathcal{W}_N)\right] = \mathcal{O}(1), $$
then
\begin{align*}
    \mathrm{Var} \left[ \frac{1}{N} I_n \left(\mathcal{W}_N\right)\right]
    &= \frac{1}{N^2} \mathbb{E} \left[ \left( \sum_b \mathrm{Tr}_{b}(\mathcal{W}_N) \right)^2\right] - \frac{1}{N^2} \mathbb{E} \left[ \sum_b \mathrm{Tr}_{b}(\mathcal{W}_N) \right]^2 \\
    &= \frac{1}{N^2}\sum_{b,b'} \left( \mathbb{E} \left[ \mathrm{Tr}_{b}(\mathcal{W}_N)\mathrm{Tr}_{b'}(\mathcal{W}_N)\right] - \mathbb{E} \left[ \mathrm{Tr}_{b}(\mathcal{W}_N)\right] \mathbb{E}\left[\mathrm{Tr}_{b'}(\mathcal{W}_N)\right] \right) \\
    &= \mathcal{O}\left(\frac{1}{N^2}\right).
\end{align*}

\begin{proof}[Proof of Lemma \ref{lem:var}]
We will firstly get a bound in $\mathcal{O}\left(N\right)$ which we will improve in a second time. 
Let $\mathcal{W}= \T / N^{\frac{p-1}{2}}$ be a Wigner tensor. 
Let $b$ and $d$ be two $p$-regular combinatorial maps with the same number of vertices and denote $n=\vert V(b)\vert = \vert E(b^\dagger) \vert=\vert V(d)\vert = \vert E(d^\dagger) \vert$. 
Denote also 
$$\kappa_{b,d} := \mathbb{E} \left[ \mathrm{Tr}_{b}(\mathcal{W}_N)\mathrm{Tr}_{d}(\mathcal{W}_N)\right] - \mathbb{E} \left[ \mathrm{Tr}_{b}(\mathcal{W}_N)\right] \mathbb{E}\left[\mathrm{Tr}_{d}(\mathcal{W}_N)\right].$$
Denote finally $\mathcal{M}_{2n}$ the set of melonic graphs with $2n$ vertices.
\begin{align*}
    \kappa_{b,d} &= N^{-2\gamma n} \sum_{\pi, \eta \in \mathcal{P}(np/2)} \mathbb{E} \left[ \mathrm{Tr}^0_{b_{\pi}}(\T) \mathrm{Tr}^0_{d_{\eta}}(\T) \right] - N^2 \alpha^2 \mathbb{1}_{G(b) \in \mathcal{M}_{2n}} \mathbb{1}_{G(d) \in \mathcal{M}_{2n}} + \mathcal{O}\left( N\right) \\
    &= N^{-2\gamma n} \sum_{\pi, \eta \in \mathcal{P}(np/2)} \sum_{ \genfrac{}{}{0pt}{}{ a_1,\ldots,a_{\vert \pi \vert} }{\text{distinct}} } \sum_{ \genfrac{}{}{0pt}{}{b_1,\ldots,b_{\vert \eta \vert} }{\text{distinct}} } \prod_{e=(v_1,\ldots,v_p) \in E_\pi^*} \prod_{e'=(v'_1,\ldots,v'_p) \in E_\eta^*} \mathbb{E}\left[ \mathcal{T}_{a_{v_1} \ldots a_{v_p}}^{m(e)} \mathcal{T}^{m(e')}_{b_{v'_1} \ldots b_{v'_p}} \right] \\
    & \quad - N^2 \alpha^2 \mathbb{1}_{G(b) \in \mathcal{M}_{2n}} \mathbb{1}_{G(d) \in \mathcal{M}_{2n}} + \mathcal{O}\left( N\right) .
\end{align*}
For $\pi, \eta \in \mathcal{P}(np/2)$, the hypergraph $H(b_{\pi}) \cup H(d_{\eta})$ (resp. $H(b_{\pi}^\dagger) \cup H(d_{\eta}^\dagger)$) has $2n$ vertices (resp. hyperedges) and at most two connected components. 
The crucial point is that the term $\mathbb{E} \left[ \mathcal{T}^{m(e)}_{v_1 \ldots v_p}  \mathcal{T}^{m(e')}_{b_{v'_1} \ldots b_{v'_p}} \right]$ is not null if and only if $m(e) \geq 2$ and $m(e') \geq 2$, or $e=e'$.

Again we denote $H^*$ the simple reduced hypergraph of $H=H(b_{\pi}^\dagger) \cup H(d_{\eta}^\dagger)$. Its number of vertices is denoted $M$, we have 
\begin{itemize}
    \item[$\bullet$] if $H^*$ has one connected component, $\vert E^* \vert \leq \frac{2n}{2} =n$ and then Lemma \ref{lem:euler} gives
    \[ M = |V^*| \leq 1 + (p-1) n = 1 + 2 \gamma n \]
    \item[$\bullet$] if $H^*$ has two connected component $H_1=H(b_{\pi}^\dagger)$ and $H_2=H(d_{\eta}^\dagger)$, 
    \[M = |V^*_1| + |V^*_2| \leq 2 + (p-1) (|E^*_1| + \vert E^*_2 \vert ) \] with $|E^*_1|, |E^*_2| \leq \frac{n}{2}$. Then, we have
    \[ M \leq 2 + 2 \gamma n \]
    with equality if and only if $H_1$ and $H_2$ are double hypertrees. 
\end{itemize}
Hence if $H$ is the disjoint union of two hypertrees, 
\[N^M = N^{2 + 2 \gamma n} ,\]
and in all the other cases,
\[N^M =  \mathcal{O}\left( N^{1 + 2\gamma n }\right) .\]
Hence we get 
\begin{align*}
    \kappa_{b,d} &= N^{-2 \gamma n} N^{2+2 \gamma n} \alpha^2 \mathbb{1}_{G(b) \in \mathcal{M}_{2n}} \mathbb{1}_{G(d) \in \mathcal{M}_{2n}} + \mathcal{O}\left( N^{-2 \gamma n} N^{1+2 \gamma n} \right) \\
    & \quad - N^2 \alpha^2 \mathbb{1}_{G(b) \in \mathcal{M}_{2n}} \mathbb{1}_{G(d) \in \mathcal{M}_{2n}} + \mathcal{O}\left( N \right) \\
    &= \mathcal{O}\left(N\right)
\end{align*}

We will now improve this bound. The first question is which $b_{\pi}$ are such that $\mathrm{Tr}^0_{b_\pi}$ is of order $1$. Remark that 
\begin{itemize}
    \item[-] if $n$ is odd, $\vert E^* \vert \leq \frac{n-1}{2}$ and then
    \[ \vert V^* \vert \leq 1 + (p-1) \frac{n-1}{2} = 1 + \gamma n - \frac{p-1}{2}.\]
     For odd $p$, $\mathcal{B}_n^{(p)} = \emptyset$ and for even $p$, we have $p\geq 4$ so any hypermap will contribute as $\mathcal{O}\left( \frac{1}{N}\right)$ because
    \[ \vert V^* \vert \leq 1 + \gamma n - \frac{3}{2} < \gamma n\]
    \item[-] if $n=2m$ is even, the hypermaps contributing at order $1$ are exactly the ones with one cycle in their reduced hypergraph. Denote $A_1(2m)$ the set of such hypermaps.
\end{itemize}
Hence, we have
\begin{align*}
    \mathbb{E} & \left[ \mathrm{Tr}_{b}(\mathcal{W}_N)\right]  \mathbb{E} \left[\mathrm{Tr}_{d}(\mathcal{W}_N)\right] = \\
    & N^2 \alpha^2 \mathbb{1}_{G(b),G(d) \in \mathcal{M}_{2n}} + N \alpha (\sum_{\pi \in \mathcal{P}(np/2)} \beta_{b_\pi} \mathbb{1}_{ \genfrac{}{}{0pt}{}{G(d) \in \mathcal{M}_{2n}}{b_\pi  \in A_1(2n)} } + \sum_{\eta \in \mathcal{P}(np/2)} \beta_{d_\eta} \mathbb{1}_{\genfrac{}{}{0pt}{}{G(b) \in \mathcal{M}_{2n}}{d_\pi  \in A_1(2n)} }) + \mathcal{O}\left( 1\right)
\end{align*}
But recall that
\begin{align*}
    \mathbb{E} \left[ \mathrm{Tr}_{b}(\mathcal{W}_N) \mathrm{Tr}_{d}(\mathcal{W}_N)\right] 
    = N^{-2\gamma n} \sum_{\pi, \eta } \sum_{ \genfrac{}{}{0pt}{}{ a_1,\ldots,a_{\vert \pi \vert} }{\text{distinct}} } \sum_{ \genfrac{}{}{0pt}{}{b_1,\ldots,b_{\vert \eta \vert} \leq N}{\text{distinct}} } \prod_{e\in E_\pi^*} \prod_{e' \in E_\eta^*} \mathbb{E}\left[ \mathcal{T}_{a_{v_1} \ldots a_{v_p}}^{m(e)} \mathcal{T}^{m(e')}_{b_{v'_1} \ldots b_{v'_p}} \right] .
\end{align*} 
 Firstly, if $H_1$ and $H_2$ are disconnected, we can have:
\begin{itemize}
    \item[$\bullet$] $H_1, H_2 \in \mathcal{M}_{2n}$ which gives the term $N^2 \alpha^2 \mathbb{1}_{G(b),G(d) \in \mathcal{M}_{2n}}$,
    \item[$\bullet$] $H_1\in \mathcal{M}_{2n}$ and $H_2 \in A_1(2n)$ or $H_2\in \mathcal{M}_{2n}$ and $H_1 \in A_1(2n)$ which gives the term $N \alpha (\sum_{\pi \in \mathcal{P}(np/2)} \beta_{b_\pi} \mathbb{1}_{\genfrac{}{}{0pt}{}{G(d) \in \mathcal{M}_{2n}}{b_\pi  \in A_1(2n)} } + \sum_{\eta \in \mathcal{P}(np/2)} \beta_{d_\eta} \mathbb{1}_{\genfrac{}{}{0pt}{}{G(b) \in \mathcal{M}_{2n}}{d_\pi  \in A_1(2n)} })$,
    \item[$\bullet$] the other terms give immediately a contribution in $\mathcal{O}\left( 1\right)$.
\end{itemize}
Secondly, if $H_1$ and $H_2$ are connected $i.e.$ there exists 
$$ e=e'= \{v_1,\ldots,v_p \} \in E_1 \cap E_2. $$ 
%such that $\forall i \in \{ 1,\ldots ,p \}, v_i=v'_i$. 
This hypergraph will contribute at order $N$ if and only if $M$ reach the maximal bound $1+ 2\gamma n $, $i.e.$ 
\[H = H_1 \cup H_2 \text{ is a double hypertree.} \] 
This will never be possible. 
Indeed, we must have $m_H(e)=2$, so $m_{H_1}(e) = m_{H_2}(e) =1$ but since $b$ is $2$-regular $v_1$ must belong to an even number of hyperedges in $H_1=H(b_\pi^\dagger)$ and hence all the other vertices cannot have multiplicity $2$ in $H_1$. 
So we must have another hyperedge $f$ distinct from $e$ satisfying $v_1 \in f$ and $m_{H_1}(f) = m_{H_2}(f) =1$. Then, as $H^*$ is a hypertree, one can pick another vertex distinct from $v_1$ in $f$ and repeat independently the previous argument. Since the hypergraph is finite, that is a contradiction. 

This concludes the proof.
\end{proof}

\subsection{The universal law}\label{sec:mes}

The Fuss-Catalan numbers are the moments of a free Bessel law. 
We derive in this section some results about the limiting measure $\mu_{\infty}$. 
We consider the generating series $T_p$ satisfying
\[ T_p(z) = 1 + z T_p(z)^p . \]
The relation gives $ z(T_p) = T_p^{1-p} - T_p^{-p} $ and hence
\[ \frac{\partial z}{\partial T_p}=0 \Leftrightarrow 0=(1-p)T_p + p \Leftrightarrow T_p = \frac{p}{p-1} .\]
By the implicit function theorem the equation can be solved up to the critical point  
\[ z_c = \left(\frac{p}{p-1}\right)^{1-p} - \left(\frac{p}{p-1}\right)^{-p} = \frac{(p-1)^{p-1}}{p^p} . \]
For $|z|<z_c$, it admits the absolute convergent series representation 
\[ T_p(z) = \sum_{k\geq 0} F_p(k) z^k ,\]
where $ F_p(k) = \frac{1}{pk+1}\binom{pk+1}{k}$. 
Now we define 
\[ \mathcal{R}_{\infty}(z) := \sum_{n \geq 0} \frac{1}{z^{n+1}} \times  \mathbb{1}_{\text{n even }} F_p\left( \frac{n}{2} \right), \]
that is simply 
\[ \mathcal{R}_{\infty}(z) = \frac{1}{z} \sum_{n \geq 0} \frac{1}{z^{2n}}  F_p\left( n \right) = \frac{1}{z} T_p \left(\frac{1}{z^2} \right) . \]
It is the Cauchy-Stieltjes transform of a measure $\mu_{\infty}^{(p)}$ \cite{collins}.

\begin{remark}
We have the identity 
    \[ z\mathcal{R}_{\infty}(z) = T_p \left(\frac{1}{z^2}  \right) = 1 + \frac{1}{z^2} T_p \left(\frac{1}{z^2}  \right)^p = 1 + z^{p-2} \mathcal{R}_{\infty}(z)^p\]
    so we find an equation which generalizes the one known for the matrices,
    \[ z^{p-2} \mathcal{R}_{\infty}(z)^p - z\mathcal{R}_{\infty}(z) +1 = 0 \]
\end{remark}

Denoting $z_c=\frac{(p-1)^{p-1}}{p^p}$ as previously, the Fuss–Catalan numbers admit the integral representation
\[ F_p(n) = \int_0^{1/z_c} x^n P_p(x) dx , \]
with $P_p$ a real positive function. 
Indeed, as proved in \cite{pens} it can be written in terms of hypergeometric functions (or with Meijer G-function),
\[ P_p(x) = \sum_{k=1}^{p-1} \Lambda_{k,p} \ \ {}_{p-1}F_{p-2} \left( \left\{ 1 - \frac{1+j}{p-1} + \frac{k}{p} \right\}_{j=1}^{p-1}, \left\{1 + \frac{k-j}{p} \right\}_{\genfrac{}{}{0pt}{}{j=1}{j \neq k} }^{p-1} ; z_c x \right) , \]
where 
\[ \Lambda_{k,p} = \frac{1}{(p-1)^{3/2}} \sqrt{\frac{p}{2 \pi}} z_c^{\frac{k}{p}} \frac{\prod_{1\leq j \leq p-1}^{j\neq k} \Gamma\left(\frac{j-k}{p}\right)}{\prod_{1 \leq j \leq p-1} \Gamma\left(\frac{j+1}{p-1}-\frac{k}{p} \right)} .\]
Then $ T_p(z) = \sum_{k\geq 0} F_p(k) z^k$ (absolutely convergent for $|z|<z_c$) can be analytically continued on $\mathbb{C} \setminus [ z_c, \infty )$ and admits the convergent integral representation
\begin{align*}
    T_p(z) &= \sum_{k\geq 0} z^k \int_0^{1/z_c} x^k P_p(x) dx \\
    &= \int_0^{1/z_c} \frac{1}{1-zx} P_p(x) dx .
\end{align*} 
Hence,
\begin{align*}
    \mathcal{R}_{\infty}(z) &= \sum_{n\geq 0} \frac{1}{z^{n+1}} \mathbb{1}_{\text{n even }} \int_0^{1/z_c} x^{n/2} P_p(x) dx \\
    &= \sum_{n\geq 0} \frac{1}{z^{n+1}} \mathbb{1}_{\text{n even }} 2 \int_0^{\sqrt{1/z_c}} y^n P_p(y^2) y dy \\
    &= \sum_{n\geq 0} \frac{1}{z^{n+1}} \mathbb{1}_{\text{n even }} \int_{-\sqrt{1/z_c}}^{\sqrt{1/z_c}} y^n P_p(y^2) |y| dy
\end{align*}
We denote 
\[ \omega_c := \sqrt{1/z_c} .\]
As $y \mapsto  y^n P_p(y^2) |y|$ is an odd function for $n$ odd, we get:
\begin{align*}
    \mathcal{R}_{\infty}(z) &= \sum_{n\geq 0} \frac{1}{z^{n+1}} \int_{-\omega_c}^{\omega_c} y^n |y| P_p(y^2) dy \\
    &= \frac{1}{z} \int_{-\omega_c}^{\omega_c} \frac{1}{1-\frac{y}{z}} |y| P_p(y^2) dy \\
    &= \int_{-\omega_c}^{\omega_c} \frac{1}{z-y} |y| P_p(y^2) dy .
\end{align*}
Finally, we get 
\begin{equation} \notag
    \mu_{\infty}(y) = |y| P_p(y^2)
\end{equation}
with a support on $[-\omega_c,\omega_c]$.

\begin{remark}
    For $p=2$, we have $z_c = \frac{1}{4}$ so $\omega_c=2$,
    \[ P_2(x) = \frac{\sqrt{1-\frac{x}{4}}}{\pi \sqrt{x}}\]
    and then we find the Wigner semi-circle law, for $y \in [-2,2]$,
    \[ \mu^{(2)}_{\infty}(y) = \frac{1}{2\pi}\sqrt{4-y^2} \]
\end{remark}

\begin{remark}
    For p=3, we can compute 
\[ \omega_c = \sqrt{\frac{3^3}{2^2}} \simeq 2.598 \]
\[ P_3(x) = \frac{1}{2 \pi x^{2/3}} \frac{3^{1/2}}{2^{1/3}} \frac{\left(1+\sqrt{1-\frac{4x}{27}} \right)^{2/3} - \left(\frac{4x}{27}\right)^{1/3}}{\left(1+\sqrt{1-\frac{4x}{27}} \right)^{1/3}}\]
Then for $y \in [-\omega_c,\omega_c]$,
\begin{align*}
    \mu^{(3)}_{\infty}(y) &= |y|  \frac{1}{2 \pi y^{4/3}} \frac{3^{1/2}}{2^{1/3}} \frac{\left(1+\sqrt{1-\frac{4y^2}{27}} \right)^{2/3} - \left(\frac{4y^2}{27}\right)^{1/3}}{\left(1+\sqrt{1-\frac{4y^2}{27}} \right)^{1/3}} \\
    &= \frac{\sqrt{3}}{2^{4/3} \pi |y|^{1/3}} \left( \left(1+\sqrt{1-\frac{4y^2}{27}} \right)^{1/3} -  \left(\frac{\left(\frac{4y^2}{27}\right)}{1+\sqrt{1-\frac{4y^2}{27}}} \right)^{1/3} \right) \\
    &= \frac{\sqrt{3}}{2^{4/3} \pi |y|^{1/3}} \left( \left(1+\sqrt{1-\frac{4y^2}{27}} \right)^{1/3} -  \left(1-\sqrt{1-\frac{4y^2}{27}} \right)^{1/3} \right) 
\end{align*} 
This measure has the profile given in Figure \ref{fig_Wig}.
\end{remark}

\subsection{Stability of the limit law under contraction}

Let us first recall the result that we are going to prove.
Let $p\geq 3$, $\mathcal{T} \in \mathcal{S}_p(N)$ be a symmetric tensor with independent Gaussian entries $\T_{i_1,\ldots,i_p} \sim \mathcal{N}(0, \sigma_{i_1,\ldots,i_p}^2)$ ($i.e.$ $\T / N^{\frac{p-1}{2}}$ belongs to the \textit{GOTE}) and let $u \in \mathbb{S}^{N-1}$ be a sequence of deterministic unit vectors. 
For $k\leq p-2$, we define 
$$\widetilde{T} := \frac{1}{N^{\frac{p-k-1}{2}}} \mathcal{T} \cdot u^k \in \mathcal{S}_{p-k}(N) $$ 
the normalized contraction of $\mathcal{T}$ by $u^{\otimes k}$. 
Then, for all $n \geq 0$,
\[ \frac{1}{N} I_n(\widetilde{T}) \underset{N \longrightarrow \infty}{\rightarrow} \int \lambda^n \widetilde{\mu}_{\infty}(d \lambda) , \]
where \[ \widetilde{\mu}_{\infty}(y) = \sqrt{\binom{p-1}{k}} \mu^{(p-k)}_{\infty}\left(y\sqrt{\binom{p-1}{k}}\right) , \]
is the dilated Wigner-Gurau law of order $p-k$ supported on 
\[ \left[-\sqrt{\frac{(p-k)^{p-k}}{\binom{p-1}{k}(p-k-1)^{p-k-1}}}, \sqrt{\frac{(p-k)^{p-k}}{\binom{p-1}{k}(p-k-1)^{p-k-1}}} \right] . \]

This result makes the link with the approach by contractions of the tensors proposed for instance by Couillet, Comon, Goulart in \cite{couillet}. 
Indeed we prove here that even if the tensor contracted does not have independent entries anymore, we have convergence to the moments of the same law at order $p-k$ with a particular scaling. 
Before proving the Theorem, we make more precise the link with \cite{couillet} in the following remark.

\begin{remark}[Case $k=p-2$] \label{rem-cou}
    In the particular case $k=p-2$ where the contraction is a matrix, we find back Theorem 2 of \cite{couillet}. 
    \newline Indeed, their normalization for the \textit{GOTE} differs from our one by a factor $p$ with $\mathbb{E} \left[ \mathcal{T}^2_{1\ldots p}\right] = \frac{1}{p!}$ and not $\frac{1}{(p-1)!}$ as for us. 
    We chose our normalization because it gives the classical matrix \textit{GOE} in the case $p=2$. 
    Hence to find their result from ours, the limit law has to be dilated by a factor $\sqrt{p}$. So, for $\mathcal{W}$ in their \textit{GOTE}, $\frac{1}{\sqrt{N}}\mathcal{W} \cdot u^{p-2}$ is a matrix with spectral measure $\mu_{\frac{1}{\sqrt{N}}\mathcal{W} \cdot u^{p-2}}$ and by Theorem \ref{thm:contr} and Theorem \ref{thm:2}, we have
    \[ \mu_{\frac{1}{\sqrt{N}}\mathcal{W} \cdot u^{p-2}} \rightarrow \rho \]
    where $\rho$ is a semi-circular law with density:
    \begin{align*}
        \rho(dx) &= \sqrt{p} \sqrt{\binom{p-1}{p-2}}\mu^{(2)}_{\infty}(dx\sqrt{p}\sqrt{p-1}) \\
        &= \frac{p(p-1)}{2 \pi} \sqrt{\left( \frac{4}{p(p-1)} -x^2 \right)^+} dx
    \end{align*}
    and a support on $\left[ -\frac{2}{\sqrt{p(p-1)}}, \frac{2}{\sqrt{p(p-1)}} \right]$. 
    It is exactly what they proved.
\end{remark}

\begin{proof}[Proof of Theorem \ref{thm:contr}]
We fix $k\leq p-2$. As we are only in the Gaussian case, by orthogonal invariance we can assume that $u = e^{(1)}$ is the first vector of the canonical basis of $\mathbb{R}^N$. Recall that
\begin{align*}
    \mathbb{E} &\left[ \frac{1}{N} I_n \left(\frac{\mathcal{T} \cdot \left( e^{(1)} \right)^k}{N^{\frac{p-k-1}{2}}}\right)\right] = N^{-(\frac{p-k-1}{2} n +1)} \times \\ 
    & \hspace{2cm} \sum_{b \in \mathcal{B}^{(p-k)}_{n}} \sum_{\pi \in \mathcal{P}(n(p-k)/2)} \sum_{ \genfrac{}{}{0pt}{}{ a_1,\ldots,a_{\vert \pi \vert}}{\text{distinct}} } \mathbb{E}\left[ \prod_{e=(v_1,\ldots,v_p) \in E(b_\pi^\dagger)} \mathcal{T} \cdot \left( e^{(1)} \right)^k_{a_{v_1} \ldots a_{v_p}} \right] .
\end{align*}  
We also have immediately that
\[ \left( \mathcal{T} \cdot \left( e^{(1)} \right)^k\right) _{a_{v_1} \ldots a_{v_{p-k}}} = \mathcal{T}_{\underbrace{1\ldots 1}_{k \text{ times}} a_{v_1} \ldots a_{v_{p-k}}} . \]
Then considering if one of the $a_j$ match with $1$ or not, we can write 
\begin{align*}
    \mathbb{E} &\left[ \frac{1}{N} I_n \left(\frac{\mathcal{T} \cdot \left( e^{(1)} \right)^k}{N^{\frac{p-k-1}{2}}}\right)\right] = N^{-(1 + \frac{p-k-1}{2} n )} \sum_{b } \sum_{\pi } \sum_{ \genfrac{}{}{0pt}{}{2\leq a_1,\ldots,a_{\vert \pi \vert} \leq N}{\text{distinct}} } \mathbb{E}\left[ \prod_{e \in E(b_\pi^\dagger)} \mathcal{T}_{1 \ldots 1 a_{v_1} \ldots a_{v_{p-k}}} \right] \\
    & \hspace{1cm} + N^{-(1 + \frac{p-k-1}{2} n )} \sum_{b } \sum_{\pi } \vert \pi \vert \sum_{ \genfrac{}{}{0pt}{}{1= a_1 < a_2,\ldots,a_{\vert \pi \vert} \leq N}{\text{distinct}} } \mathbb{E}\left[ \prod_{e \in E(b_\pi^\dagger)} \mathcal{T}_{1 \ldots 1 a_{v_1} \ldots a_{v_{p-k}}} \right] \\
    & \hspace{4cm} =: A + B .
\end{align*}
Now we may apply the same arguments of the proof of Theorem \ref{thm:1},
\begin{align*}
    A &= N^{-(1 + \frac{p-k-1}{2} n )} \sum_{b } \sum_{\pi } \sum_{ \genfrac{}{}{0pt}{}{2\leq a_1,\ldots,a_{\vert \pi \vert} \leq N}{\text{distinct}} } \mathbb{E}\left[ \prod_{e \in E(b_\pi^\dagger)} \mathcal{T}_{1 \ldots 1 a_{v_1} \ldots a_{v_{p-k}}} \right] \\
    &= N^{-(1+\frac{p-k-1}{2} n )} N^{1 + \frac{p-k-1}{2} n} \mathbb{1}_{\text{n even }} F_{p-k}\left( \frac{n}{2} \right)  \left[(p-k-1)!\right]^{\frac{n}{2}} \mathbb{E} \left[ \mathcal{T}^2_{1\ldots 1 2\ldots (p-k+1)}\right]^{\frac{n}{2}} + \mathcal{O}\left( \frac{1}{N}\right)\\
    &= \mathbb{1}_{\text{n even }} F_{p-k}\left( \frac{n}{2} \right) \left[\frac{(p-k-1)!p}{p(p-1)\ldots (k+1)}\right]^{\frac{n}{2}} + \mathcal{O}\left( \frac{1}{N}\right) \\
    &= \mathbb{1}_{\text{n even }} F_{p-k}\left( \frac{n}{2} \right) \binom{p-1}{k}^{-\frac{n}{2}} + \mathcal{O}\left( \frac{1}{N}\right) ,
\end{align*}
and
\begin{align*}
    B = N^{-(1+ \frac{p-k-1}{2} n )} \mathcal{O}\left(N^{1+ \frac{p-k-1}{2} n -1}\right) = \mathcal{O}\left( \frac{1}{N}\right) .
\end{align*}
Denoting $\mathcal{R}_{\infty}(z) := \sum_{n \geq 0} \frac{1}{z^{n+1}} \binom{p-1}{k}^{-\frac{n}{2}} \mathbb{1}_{\text{n even }} F_{p-k}\left( \frac{n}{2} \right)$, we get 
\begin{align*}
    \mathcal{R}_{\infty}(z) &= \frac{1}{z}\sum_{n\geq 0} \frac{1}{\left( z\sqrt{\binom{p-1}{k}}\right) ^{n}} \int_{-\omega_c}^{\omega_c} y^n |y| P_p(y^2) dy \\
    &= \int_{-\omega_c / \sqrt{\binom{p-1}{k}}}^{\omega_c /\sqrt{\binom{p-1}{k}}} \frac{1}{z-y} \binom{p-1}{k} |y| P_p\left(\binom{p-1}{k}y^2\right) dy 
\end{align*}
Hence we finally obtain in this case 
\[ \widetilde{\mu}_{\infty}(y) = \sqrt{\binom{p-1}{k}} \mu^{(p-k)}_{\infty}\left(y\sqrt{\binom{p-1}{k}}\right) , \]
this concludes the proof.
\end{proof}

\appendix

\section{Analogy in the matrix case}\label{appendixA}

Let $N \in \mathbb{N}$ and $\mathcal{M} \in \mathcal{S}_2(N)$ be a symmetric matrix. 

\begin{definition}
    For $ z \in \mathbb{C} \setminus \left(\mathrm{Sp}(\mathcal{M}) \cup \{0 \} \right) $, we define:
    \begin{equation} \notag
    \mathcal{R}(z) := \frac{z^{-1}}{\Xi(z)}\int \frac{\|\phi\|^2}{N} \exp \left(-\left( \frac{1}{2}\|\phi \|^2 - \frac{1}{z}\frac{\mathcal{M} \cdot \phi^2}{2}\right)\right) [d\phi ] ,
    \end{equation}
    where $\phi = (\phi_1,\ldots,\phi_N) \in \mathbb{R}^N$, $[d\phi ] := (2 \pi)^{-N/2} \prod_{i=1}^N d\phi_i$ and $\mathcal{M} \cdot \phi^2= \sum_{1\leq i,j\leq N} \mathcal{M}_{ij} \phi_i \phi_j$.
\end{definition}

We prove that it is the usual resolvent trace for matrices, or in other words,

\begin{proposition}\label{prop:mat}
for all $ z \notin \mathrm{Sp}(\mathcal{M}) \cup \{0 \}$,
$$ \mathcal{R}(z) = \frac{1}{N} \mathrm{Tr}\left( (z\mathcal{I}-\mathcal{M})^{-1}\right) . $$
\end{proposition}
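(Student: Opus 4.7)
The plan is to reduce the Gaussian integral defining $\mathcal{R}(z)$ to a product of one-dimensional Gaussians by diagonalizing $\mathcal{M}$. Since $\mathcal{M}$ is real symmetric, write $\mathcal{M} = U D U^T$ with $U \in \mathbf{O}(N)$ and $D = \mathrm{diag}(\lambda_1,\ldots,\lambda_N)$. Applying the change of variables $\psi = U^T \phi$, which has Jacobian one and preserves $\|\phi\|^2$, while transforming the quadratic form into $\mathcal{M}\cdot \phi^2 = \sum_i \lambda_i \psi_i^2$, the integral $\Xi(z)$ decouples as
\[
\Xi(z) = \int \exp\!\left(-\tfrac{1}{2}\sum_{i=1}^N \bigl(1 - \tfrac{\lambda_i}{z}\bigr)\psi_i^2\right)[d\psi] = \prod_{i=1}^N \left(1 - \frac{\lambda_i}{z}\right)^{-1/2}.
\]
The elementary Gaussian identity holds for $z \in i\mathbb{R}^*$ (where $\mathrm{Re}(1-\lambda_i/z) = 1 > 0$, so the integral converges absolutely), and the right-hand side extends by analytic continuation to a meromorphic function on $\mathbb{C}\setminus(\mathrm{Sp}(\mathcal{M})\cup\{0\})$ matching the extension used to define $\mathcal{R}(z)$.

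Next I apply the matrix analogue of Proposition \ref{res}: the integration-by-parts argument used there specializes without change to $p=2$, yielding
\[
\mathcal{R}(z) = z^{-1} - \frac{2}{N}\frac{d}{dz}\bigl(\ln \Xi(z)\bigr).
\]
From the product formula above,
\[
\ln \Xi(z) = -\frac{1}{2}\sum_{i=1}^N \ln\!\left(1 - \frac{\lambda_i}{z}\right), \qquad \frac{d}{dz}\ln \Xi(z) = -\frac{1}{2}\sum_{i=1}^N \frac{\lambda_i}{z(z-\lambda_i)}.
\]
Substituting and using the partial fraction identity $\frac{1}{z} + \frac{\lambda_i}{z(z-\lambda_i)} = \frac{1}{z-\lambda_i}$, one obtains
\[
\mathcal{R}(z) = \frac{1}{N}\sum_{i=1}^N \frac{1}{z-\lambda_i} = \frac{1}{N}\mathrm{Tr}\!\left((z\mathcal{I}-\mathcal{M})^{-1}\right),
\]
which is the desired identity.

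The only non-routine point is the justification of the analytic continuation: the integral defining $\Xi(z)$ converges only on a restricted cone (essentially the imaginary axis), but both sides of the target identity are meromorphic on $\mathbb{C}\setminus(\mathrm{Sp}(\mathcal{M})\cup\{0\})$, so agreement on a nonempty open subset extends to the whole domain by the identity theorem. This is the same type of analytic continuation already invoked in \cite{gur1} for the tensor case, so no new difficulty arises here.
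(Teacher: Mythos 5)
Your proposal is correct and follows essentially the same route as the paper: diagonalize $\mathcal{M}$ by an orthogonal change of variables, use the integration-by-parts identity $\mathcal{R}(z)=z^{-1}-\tfrac{2}{N}\tfrac{d}{dz}\ln\Xi(z)$ (the $p=2$ case of Proposition~\ref{res}), and reduce to a partial-fraction computation. The only cosmetic difference is that you evaluate $\Xi(z)$ explicitly as $\prod_i(1-\lambda_i/z)^{-1/2}$ on the imaginary axis and invoke analytic continuation, whereas the paper computes $\tfrac{d}{dz}\ln\Xi(z)=-\tfrac{1}{2z^2}\mathrm{Tr}(\mathcal{M}\Sigma)$ by differentiating under the integral sign (Lemma~\ref{lem:log}), thereby sidestepping the branch of the square root; both are valid.
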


\begin{proof}
First, we define for $ z \notin \mathrm{Sp}(\mathcal{M}) \cup \{0 \}$,
    \[ \tilde{\Xi}(z) := \int \exp \left(-\left( \frac{1}{2}\|\phi \|^2 - \frac{1}{z}\frac{\mathcal{M} \cdot \phi^2}{2}\right)\right) \prod_{i=1}^N d\phi_i . \]
We write this expression as
\begin{align*}
    \tilde{\Xi}(z) &= \int \exp \left(-\frac{1}{2} \langle \phi, (\mathcal{I} - \frac{1}{z} \mathcal{M}) \phi \rangle \right) \prod_{i=1}^N d\phi_i  \\
    &= \int \exp \left(-\frac{1}{2} \langle \phi, \Sigma^{-1} \phi \rangle \right) \prod_{i=1}^N d\phi_i 
\end{align*} 
where $\Sigma := (\mathcal{I} - \frac{1}{z} \mathcal{M})^{-1}$ is a normal matrix. By an orthogonal change of basis in the integral, we can assume for the following that $\mathcal{M} = \mathrm{diag}(\lambda_1,\ldots,\lambda_N)$.

\begin{remark}
    For $z \in \mathbb{R}$ sufficiently large, $\Sigma$ is positive-definite and we have 
    \[ \tilde{\Xi}(z) = (2 \pi)^{N/2} \sqrt{\mathrm{det} \Sigma} .\]
    Hence, we have in this case
    \begin{align*}
    \frac{d}{dz}\left( \mathrm{ln} \left((2 \pi)^{-N/2} \tilde{\Xi}(z)\right) \right)
    &= \frac{1}{2}\frac{d}{dz}\left( -\sum_{i=1}^N \mathrm{ln} \left(1-\frac{\lambda_i}{z}\right)\right) \\
    &= \frac{-1}{2 z^2}\sum_{i=1}^N \lambda_i \left(1-\frac{\lambda_i}{z}\right)^{-1} \\
    &= \frac{-1}{2 z^2} \mathrm{Tr}(\mathcal{M} \Sigma)
\end{align*}
\end{remark}

This result holds in more generality for all $z \notin \mathrm{Sp}(\mathcal{M}) \cup \{0 \}$, as we are going to prove in the following Lemma. 
Denote $[d\phi ] := (2 \pi)^{-N/2} \prod_{i=1}^N d\phi_i$ and 
\[ \Xi(z) := \int \exp \left(-\left( \frac{1}{2}\|\phi \|^2 - \frac{1}{z}\frac{\mathcal{M} \cdot \phi^2}{2}\right)\right) [d\phi ] = (2 \pi)^{-N/2} \tilde{\Xi}(z). \]

\begin{lemma}\label{lem:log}
For all $ z \notin \mathrm{Sp}(\mathcal{M}) \cup \{0 \}$,
    \[ \frac{d}{dz}\left( \mathrm{ln} \Xi(z) \right) = \frac{-1}{2 z^2} \mathrm{Tr}(\mathcal{M} \Sigma). \]
\end{lemma}
\begin{proof}[Proof of Lemma \ref{lem:log}]
Let us compute for $ z \notin \mathrm{Sp}(\mathcal{M}) \cup \{0 \}$,
    \begin{align*}
        \frac{d}{dz}\left( \mathrm{ln} \Xi(z) \right) 
        &= \frac{-1}{z^2} \frac{1}{\Xi(z)} \int \frac{\mathcal{M} \cdot \phi^2}{2} \exp \left(-\frac{1}{2} \langle \phi, \Sigma^{-1} \phi \rangle\right) [d\phi ] \\
        &= \frac{-1}{2 z^2} \frac{1}{\Xi(z)} \sum_{i=1}^N \lambda_i \int \phi_i^2 \exp \left(-\sum_{j=1}^N \frac{1}{2 \left(1-\frac{\lambda_j}{z}\right)^{-1}}  \phi_j^2 \right) [d\phi] \\
        &= \frac{-1}{2 z^2} \frac{1}{\Xi(z)} \sum_{i=1}^N \lambda_i \left(1-\frac{\lambda_i}{z}\right)^{-1} \Xi(z) \\
    \end{align*}
That gives the desired result.
\end{proof}

Moreover, integrating by parts $\Xi(z)$, we have for $1 \leq i \leq N$:
\[ \Xi(z) = - \int \phi_i \left( -\phi_i + \frac{1}{z} \sum_{j=1}^N \mathcal{M}_{ij}\phi_j \right) \exp \left(-\left( \frac{1}{2}\|\phi \|^2 - \frac{1}{z}\frac{\mathcal{M} \cdot \phi^2}{2}\right)\right) [d\phi ] . \]
Hence summing on $1 \leq i \leq N$,
\begin{equation} \label{eq:partition}
     N \Xi(z) = \int \left( \|\phi\|^2 - \frac{1}{z} \mathcal{M} \cdot \phi^2 \right) \exp \left(-\left( \frac{1}{2}\|\phi \|^2 - \frac{1}{z}\frac{\mathcal{M} \cdot \phi^2}{2}\right)\right) [d\phi ] .
\end{equation}
We are now ready to conclude the proof of Proposition \ref{prop:mat}
\begin{align*}
    \mathcal{R}(z) &= z^{-1} + \frac{1}{N}\frac{z^{-1}}{\Xi(z)} \int \frac{1}{z} \mathcal{M} \cdot \phi^2 \exp \left(-\left( \frac{1}{2}\|\phi \|^2 - \frac{1}{z}\frac{\mathcal{M} \cdot \phi^2}{2}\right)\right) [d\phi ] \tag*{(Equation \eqref{eq:partition})} \\
    &= z^{-1} - \frac{1}{N}\frac{2}{\Xi(z)} \frac{d}{dz} \left( \Xi(z) \right) \\
    &= z^{-1} + \frac{1}{N}\frac{1}{z^2} \mathrm{Tr}(\mathcal{M} \Sigma) \tag*{(Lemma \ref{lem:log})} \\
    &= \frac{1}{N} \mathrm{Tr}\left(\frac{1}{z} \mathcal{I} + \frac{1}{z}\mathcal{M}(z\mathcal{I}-\mathcal{M})^{-1}\right) \\
    &= \frac{1}{N} \mathrm{Tr}\left( (z\mathcal{I}-\mathcal{M})^{-1}\right) . \tag*{(resolvent identity)}
\end{align*}

\end{proof}

\printbibliography

@article {couillet,
    AUTHOR = {Goulart, Jos\'e{} Henrique de Morais and Couillet, Romain and
              Comon, Pierre},
     TITLE = {A random matrix perspective on random tensors},
   JOURNAL = {J. Mach. Learn. Res.},
  FJOURNAL = {Journal of Machine Learning Research (JMLR)},
    VOLUME = {23},
      YEAR = {2022},
     PAGES = {Paper No. [264], 36},
      ISSN = {1532-4435,1533-7928},
   MRCLASS = {65F55 (15A69 15B52 60B20)},
  MRNUMBER = {4577703},
}

@article {cart,
    AUTHOR = {Cartwright, Dustin and Sturmfels, Bernd},
     TITLE = {The number of eigenvalues of a tensor},
   JOURNAL = {Linear Algebra Appl.},
  FJOURNAL = {Linear Algebra and its Applications},
    VOLUME = {438},
      YEAR = {2013},
    NUMBER = {2},
     PAGES = {942--952},
      ISSN = {0024-3795,1873-1856},
   MRCLASS = {15A69},
  MRNUMBER = {2996375},
MRREVIEWER = {Tan\ Zhang},
       DOI = {10.1016/j.laa.2011.05.040},
       URL = {https://doi.org/10.1016/j.laa.2011.05.040},
}

@article {breid,
    AUTHOR = {Breiding, Paul},
     TITLE = {How many eigenvalues of a random symmetric tensor are real?},
   JOURNAL = {Trans. Amer. Math. Soc.},
  FJOURNAL = {Transactions of the American Mathematical Society},
    VOLUME = {372},
      YEAR = {2019},
    NUMBER = {11},
     PAGES = {7857--7887},
      ISSN = {0002-9947,1088-6850},
   MRCLASS = {15A18 (15A69 60B20 62H99)},
  MRNUMBER = {4029684},
MRREVIEWER = {Vladislav\ Kargin},
       DOI = {10.1090/tran/7910},
       URL = {https://doi.org/10.1090/tran/7910},
}

@article {jagannath,
    AUTHOR = {Jagannath, Aukosh and Lopatto, Patrick and Miolane, L\'eo},
     TITLE = {Statistical thresholds for tensor {PCA}},
   JOURNAL = {Ann. Appl. Probab.},
  FJOURNAL = {The Annals of Applied Probability},
    VOLUME = {30},
      YEAR = {2020},
    NUMBER = {4},
     PAGES = {1910--1933},
      ISSN = {1050-5164,2168-8737},
   MRCLASS = {62F05 (62F10 62H25 82B26 82D30)},
  MRNUMBER = {4132641},
       DOI = {10.1214/19-AAP1547},
       URL = {https://doi.org/10.1214/19-AAP1547},
}

@book {gur,
    AUTHOR = {Gurau, Razvan},
     TITLE = {Random tensors},
 PUBLISHER = {Oxford University Press, Oxford},
      YEAR = {2017},
     PAGES = {x+333},
      ISBN = {978-0-19-878793-8},
   MRCLASS = {15-02 (05C15 05C90 15A69 60B99 60D05)},
  MRNUMBER = {3616422},
}

@article {qi,
    AUTHOR = {Qi, Liqun},
     TITLE = {Eigenvalues of a real supersymmetric tensor},
   JOURNAL = {J. Symbolic Comput.},
  FJOURNAL = {Journal of Symbolic Computation},
    VOLUME = {40},
      YEAR = {2005},
    NUMBER = {6},
     PAGES = {1302--1324},
      ISSN = {0747-7171,1095-855X},
   MRCLASS = {15A18 (15A15 15A69)},
  MRNUMBER = {2178089},
MRREVIEWER = {John\ Chollet},
       DOI = {10.1016/j.jsc.2005.05.007},
       URL = {https://doi.org/10.1016/j.jsc.2005.05.007},
}

@incollection {anand,
    AUTHOR = {Anandkumar, Anima and Ge, Rong and Hsu, Daniel and Kakade,
              Sham M. and Telgarsky, Matus},
     TITLE = {Tensor decompositions for learning latent variable models (a
              survey for {ALT})},
 BOOKTITLE = {Algorithmic learning theory},
    SERIES = {Lecture Notes in Comput. Sci.},
    VOLUME = {9355},
     PAGES = {19--38},
 PUBLISHER = {Springer, Cham},
      YEAR = {2015},
%      ISBN = {978-3-319-24486-0; 978-3-319-24485-3},
   MRCLASS = {62H12 (15A69 62F12 62H25 68Q32)},
  MRNUMBER = {3480733},
       DOI = {10.1007/978-3-319-24486-0\_2},
       URL = {https://doi.org/10.1007/978-3-319-24486-0_2},
}

@article {land,
    AUTHOR = {Landsberg, Joseph M. and Qi, Yang and Ye, Ke},
     TITLE = {On the geometry of tensor network states},
   JOURNAL = {Quantum Inf. Comput.},
  FJOURNAL = {Quantum Information \& Computation},
    VOLUME = {12},
      YEAR = {2012},
    NUMBER = {3-4},
     PAGES = {346--354},
      ISSN = {1533-7146},
   MRCLASS = {81P16},
  MRNUMBER = {2933533},
}

@article {sidi,
    AUTHOR = {Sidiropoulos, Nicholas D. and De Lathauwer, Lieven and Fu,
              Xiao and Huang, Kejun and Papalexakis, Evangelos E. and
              Faloutsos, Christos},
     TITLE = {Tensor decomposition for signal processing and machine
              learning},
   JOURNAL = {IEEE Trans. Signal Process.},
  FJOURNAL = {IEEE Transactions on Signal Processing},
    VOLUME = {65},
      YEAR = {2017},
    NUMBER = {13},
     PAGES = {3551--3582},
      ISSN = {1053-587X,1941-0476},
   MRCLASS = {94A12},
  MRNUMBER = {3666587},
MRREVIEWER = {Yulin\ Zhang},
       DOI = {10.1109/TSP.2017.2690524},
       URL = {https://doi.org/10.1109/TSP.2017.2690524},
}

@misc{gur1,
      title={On the generalization of the {W}igner semicircle law to real symmetric tensors}, 
      author={Razvan Gurau},
      year={2020},
      eprint={2004.02660},
      archivePrefix={arXiv},
      primaryClass={math-ph},
      url={https://arxiv.org/abs/2004.02660}, 
}

@article{gur2,
   title={Quenched equals annealed at leading order in the colored {SYK} model},
   volume={119},
   ISSN={1286-4854},
   url={http://dx.doi.org/10.1209/0295-5075/119/30003},
   DOI={10.1209/0295-5075/119/30003},
   number={3},
   journal={EPL (Europhysics Letters)},
   publisher={IOP Publishing},
   author={Gurau, Razvan},
   year={2017},
   month=aug, pages={30003} }

@article{pens,
    author = "Penson, K. A. and Zyczkowski, K.",
    title = "Product of {G}inibre matrices: {F}uss-{C}atalan and {R}aney distributions",
    journal = "Physical Review",
    volume = "83",
    number = "6",
    pages = "061118",
    year = "2011",
}

@article {collins,
    AUTHOR = {Banica, T. and Belinschi, S. T. and Capitaine, M. and Collins,
              B.},
     TITLE = {Free {B}essel laws},
   JOURNAL = {Canad. J. Math.},
  FJOURNAL = {Canadian Journal of Mathematics. Journal Canadien de
              Math\'ematiques},
    VOLUME = {63},
      YEAR = {2011},
    NUMBER = {1},
     PAGES = {3--37},
      ISSN = {0008-414X,1496-4279},
   MRCLASS = {46L54 (15B52 60B20)},
  MRNUMBER = {2779129},
MRREVIEWER = {Nizar\ Demni},
       DOI = {10.4153/CJM-2010-060-6},
       URL = {https://doi.org/10.4153/CJM-2010-060-6},
}

@article {colgur2,
    AUTHOR = {Collins, Beno\^it and Gurau, Razvan and Lionni, Luca},
     TITLE = {The tensor {H}arish-{C}handra-{I}tzykson-{Z}uber integral
              {II}: detecting entanglement in large quantum systems},
   JOURNAL = {Comm. Math. Phys.},
  FJOURNAL = {Communications in Mathematical Physics},
    VOLUME = {401},
      YEAR = {2023},
    NUMBER = {1},
     PAGES = {669--716},
      ISSN = {0010-3616,1432-0916},
   MRCLASS = {81P15 (05C15 81P16 81P40 81P42)},
  MRNUMBER = {4604905},
MRREVIEWER = {Zhu-Jun\ Zheng},
       DOI = {10.1007/s00220-023-04653-5},
       URL = {https://doi.org/10.1007/s00220-023-04653-5},
}

@article {colgur1,
    AUTHOR = {Collins, Beno\^it and Gurau, Razvan and Lionni, Luca},
     TITLE = {The tensor {H}arish-{C}handra-{I}tzykson-{Z}uber integral {I}:
              {W}eingarten calculus and a generalization of monotone
              {H}urwitz numbers},
   JOURNAL = {J. Eur. Math. Soc. (JEMS)},
  FJOURNAL = {Journal of the European Mathematical Society (JEMS)},
    VOLUME = {26},
      YEAR = {2024},
    NUMBER = {5},
     PAGES = {1851--1897},
      ISSN = {1435-9855,1435-9863},
   MRCLASS = {05A05 (05A17 05C10 15B52)},
  MRNUMBER = {4735823},
MRREVIEWER = {Radhakrishnan\ Nair},
       DOI = {10.4171/jems/1315},
       URL = {https://doi.org/10.4171/jems/1315},
}

@article {sasa1,
    AUTHOR = {Kawano, Taigen and Obster, Dennis and Sasakura, Naoki},
     TITLE = {Canonical tensor model through data analysis: dimensions,
              topologies, and geometries},
   JOURNAL = {Phys. Rev. D},
  FJOURNAL = {Physical Review D},
    VOLUME = {97},
      YEAR = {2018},
    NUMBER = {12},
     PAGES = {124061, 25},
      ISSN = {2470-0010,2470-0029},
   MRCLASS = {83C45},
  MRNUMBER = {3893083},
       DOI = {10.1103/physrevd.97.124061},
       URL = {https://doi.org/10.1103/physrevd.97.124061},
}

@article {sasa2,
    AUTHOR = {Sasakura, Naoki and Sato, Yuki},
     TITLE = {Constraint algebra of general relativity from a formal
              continuum limit of canonical tensor model},
   JOURNAL = {J. High Energy Phys.},
  FJOURNAL = {Journal of High Energy Physics},
      YEAR = {2015},
    NUMBER = {10},
     PAGES = {109, front matter+18},
      ISSN = {1126-6708,1029-8479},
   MRCLASS = {83C45 (83C65)},
  MRNUMBER = {3435530},
MRREVIEWER = {Roman\ Romanovich\ Zapatrin},
       DOI = {10.1007/JHEP10(2015)109},
       URL = {https://doi.org/10.1007/JHEP10(2015)109},
}

@inproceedings {sedd0, 
    author = {Seddik, Mohamed El Amine and Tiomoko, Malik and Decurninge, Alexis and Panov, Maxim and Guillaud, Maxime}, title = {Learning from low rank tensor data: a random tensor theory perspective}, year = {2023}, publisher = {JMLR.org}, 
    booktitle = {Proceedings of the Thirty-Ninth Conference on Uncertainty in Artificial Intelligence}, 
    articleno = {174}, 
    numpages = {10}, 
    series = {UAI '23} 
}

@incollection {kun24,
    AUTHOR = {Kunisky, Dmitriy and Moore, Cristopher and Wein, Alexander S.},
     TITLE = {Tensor cumulants for statistical inference on invariant
              distributions},
 BOOKTITLE = {2024 {IEEE} 65th {A}nnual {S}ymposium on {F}oundations of
              {C}omputer {S}cience---{FOCS} 2024},
     PAGES = {1007--1026},
 PUBLISHER = {IEEE Computer Soc., Los Alamitos, CA},
      YEAR = {2024},
      ISBN = {979-8-3315-1674-1},
   MRCLASS = {68Q87},
  MRNUMBER = {4849263},
}

@article {au2023spectralasymptoticscontractedtensor,
    AUTHOR = {Au, Benson and Garza-Vargas, Jorge},
     TITLE = {Spectral asymptotics for contracted tensor ensembles},
   JOURNAL = {Electron. J. Probab.},
  FJOURNAL = {Electronic Journal of Probability},
    VOLUME = {28},
      YEAR = {2023},
     PAGES = {Paper No. 113, 32},
      ISSN = {1083-6489},
   MRCLASS = {60B20 (15B52 46L53 46L54)},
  MRNUMBER = {4650897},
MRREVIEWER = {Nam-Gyu\ Kang},
       DOI = {10.1214/23-ejp1001},
       URL = {https://doi.org/10.1214/23-ejp1001},
}

@misc{bonnin2025characterizationgaussiantensorensembles,
      title={Characterization of {G}aussian {T}ensor {E}nsembles}, 
      author={Remi Bonnin},
      year={2025},
      eprint={2505.02814},
      archivePrefix={arXiv},
      primaryClass={math-ph},
      url={https://arxiv.org/abs/2505.02814}, 
}

@article {male0,
    AUTHOR = {Male, Camille},
     TITLE = {Traffic distributions and independence: permutation invariant
              random matrices and the three notions of independence},
   JOURNAL = {Mem. Amer. Math. Soc.},
  FJOURNAL = {Memoirs of the American Mathematical Society},
    VOLUME = {267},
      YEAR = {2020},
    NUMBER = {1300},
     PAGES = {v+88},
      ISSN = {0065-9266,1947-6221},
%      ISBN = {978-1-4704-4298-9; 978-1-4704-6399-1},
   MRCLASS = {15B52 (18M60 46L54 60B20 60F05)},
  MRNUMBER = {4197072},
MRREVIEWER = {Nam-Gyu\ Kang},
       DOI = {10.1090/memo/1300},
       URL = {https://doi.org/10.1090/memo/1300},
}

@article {cameron,
    AUTHOR = {Cameron, Naiomi T. and McLeod, Jillian E.},
     TITLE = {Returns and hills on generalized {D}yck paths},
   JOURNAL = {J. Integer Seq.},
  FJOURNAL = {Journal of Integer Sequences},
    VOLUME = {19},
      YEAR = {2016},
    NUMBER = {6},
     PAGES = {Article 16.6.1, 28},
      ISSN = {1530-7638},
   MRCLASS = {05A15 (05A16 11B83)},
  MRNUMBER = {3546615},
MRREVIEWER = {Martin\ Klazar},
       DOI = {10.9734/bjmcs/2016/30398},
       URL = {https://doi.org/10.9734/bjmcs/2016/30398},
}

@article {forresterwang,
    AUTHOR = {Forrester, Peter J. and Wang, Dong},
     TITLE = {Muttalib-{B}orodin ensembles in random matrix
              theory---realisations and correlation functions},
   JOURNAL = {Electron. J. Probab.},
  FJOURNAL = {Electronic Journal of Probability},
    VOLUME = {22},
      YEAR = {2017},
     PAGES = {Paper No. 54, 43},
      ISSN = {1083-6489},
   MRCLASS = {60B20 (15B52)},
  MRNUMBER = {3666017},
MRREVIEWER = {Khanh\ Duy\ Trinh},
       DOI = {10.1214/17-EJP62},
       URL = {https://doi.org/10.1214/17-EJP62},
}

@article {chandra,
    AUTHOR = {Chandra, Tapas Kumar},
     TITLE = {de {L}a {V}all\'ee {P}oussin's theorem, uniform integrability,
              tightness and moments},
   JOURNAL = {Statist. Probab. Lett.},
  FJOURNAL = {Statistics \& Probability Letters},
    VOLUME = {107},
      YEAR = {2015},
     PAGES = {136--141},
      ISSN = {0167-7152,1879-2103},
   MRCLASS = {60A10 (60F05)},
  MRNUMBER = {3412766},
       DOI = {10.1016/j.spl.2015.08.011},
       URL = {https://doi.org/10.1016/j.spl.2015.08.011},
}

\bigskip
\bigskip
\noindent \textbf{Rémi Bonnin} \label{name} \\ 
{\em Ecole Normale Supérieure \\ 
% 37, rue de la Fédération, 75015 Paris, France.} \\ 
Email: \href{mailto:remi.bonnin@ens.psl.eu}{remi.bonnin@ens.psl.eu} 

\end{document}